\newcommand{\dis}{\displaystyle}
\newcommand{\A}{{\mathbb{A}}}
\newcommand{\C}{{\mathbb{C}}}
\newcommand{\D}{\mathbb{D}}
\newcommand{\Ge}{\mathbb{G}}
\newcommand{\N}{\mathbb{N}}
\newcommand{\R}{{\mathbb{R}}}
\newcommand{\Sa}{\mathbb{S}}
\newcommand{\Z}{{\mathbb{Z}}}
\newcommand{\hZ}{\hat{\Z}}
\newcommand{\cV}{\check{V}}
\newcommand{\cW}{\check{W}}
\newcommand{\alg}{\mathrm{alg}}
\newcommand{\Bij}{\mathrm{Bij}}
\newcommand{\car}{\mathrm{char}}
\newcommand{\et}{\mathrm{\acute{e}t}}
\newcommand{\ev}{\mathrm{ev}}
\newcommand{\id}{\mathrm{id}}
\newcommand{\uMod}{\mathbf{Mod}}
\newcommand{\Mor}{\mathrm{Mor}}
\newcommand{\red}{\mathrm{red}}
\newcommand{\rk}{\mathrm{rk}\,}
\newcommand{\spec}{\mathrm{spec}\,}
\newcommand{\Aut}{\mathrm{Aut}}
\newcommand{\uAut}{\underline{\Aut}}
\newcommand{\uFCov}{\mathbf{FCov}}
\newcommand{\uFSet}{\mathbf{FSet}}
\newcommand{\diag}{\mathrm{diag}}
\newcommand{\End}{\mathrm{End}}
\newcommand{\Ext}{\mathrm{Ext}}
\newcommand{\Fl}{\mathrm{Fl}}
\newcommand{\uFl}{\mathbf{Fl}}
\newcommand{\uIFl}{\mathbf{IFl}}
\newcommand{\uFFl}{\mathbf{FFl}}
\newcommand{\GL}{\mathrm{GL}}
\newcommand{\uGL}{\underline{\GL}}
\newcommand{\Hom}{\mathrm{Hom}}
\newcommand{\uHom}{\underline{\Hom}}
\newcommand{\Imm}{\mathrm{Im}\,}
\newcommand{\imm}{\mathrm{im}\,}
\newcommand{\ind}{\mathrm{ind}}
\newcommand{\Iso}{\mathrm{Iso}\,}
\newcommand{\uIso}{\underline{\mathrm{Iso}}}
\newcommand{\uK}{\underline{K}}
\newcommand{\Lie}{\mathrm{Lie}\,}
\newcommand{\uLoc}{\mathbf{Loc}}
\newcommand{\rank}{\mathrm{rank}}
\newcommand{\Rep}{\mathbf{Rep}\,}
\newcommand{\uRep}{\mathbf{Rep}}
\newcommand{\uIRep}{\mathbf{IRep}}
\newcommand{\sep}{\mathrm{sep}}
\newcommand{\Sym}{\mathrm{Sym}}
\newcommand{\uVec}{\mathbf{Vec}}
\newcommand{\Ah}{{\mathcal A}}
\newcommand{\Eh}{{\mathcal E}}
\newcommand{\Gh}{{\mathcal G}}
\newcommand{\Mh}{{\mathcal M}}
\newcommand{\Oh}{{\mathcal O}}
\newcommand{\Th}{{\mathcal T}}
\newcommand{\emm}{{\mathfrak{m}}}
\newcommand{\eU}{\mathfrak{U}}
\newcommand{\ol}{\overline{l}}
\newcommand{\oK}{\overline{K}}
\newcommand{\oG}{\overline{G}}
\newcommand{\oU}{\overline{U}}
\newcommand{\tG}{\tilde{G}}
\newcommand{\tg}{\tilde{g}}
\newcommand{\tpi}{\tilde{\pi}}
\newcommand{\tY}{\tilde{Y}}
\newcommand{\silo}{\stackrel{\sim}{\rightarrow}}
\newtheorem{theorem}{Theorem}[section]
\newtheorem{prop}[theorem]{Proposition}
\newtheorem{cor}[theorem]{Corollary}
\newtheorem{remark}[theorem]{Remark}
\newcounter{aufz}
\newenvironment{rem}{\noindent {\bf Remark}}{}
\newenvironment{rems}{\noindent {\bf Remarks}}{}
\newenvironment{exmp}{\noindent{\bf Example}}{}
\newenvironment{proof}{\noindent {\bf Proof}}{\mbox{}\hspace*{\fill}$\Box$}
\newcommand{\tX}{\tilde{X}}
\newcommand{\Fh}{\mathcal{F}}
\newcommand{\verk}{\mbox{\scriptsize $\,\circ\,$}}
\begin{document}
\title{A pro-algebraic fundamental group for topological spaces\\
{\large \it Dedicated to Alexei Nikolaevich Parshin with admiration}}
\author{Christopher Deninger\footnote{Funded by the Deutsche Forschungsgemeinschaft (DFG, German Research Foundation) under Germany's Excellence Strategy EXC 2044--390685587, Mathematics M\"unster: Dynamics--Geometry--Structure.}}
\date{}
\maketitle




\section{Introduction}
In \cite{KS} Kucharczyk and Scholze define the ``\'etale fundamental group'' $\pi^{\et}_1 (X,x)$ of a pointed connected topological space $(X,x)$ using the method of Galois categories in \cite{SGA1}. This is a pro-finite group which classifies covering spaces of $X$ with finite fibres. For path-connected, locally path-connected and semi-locally simply connected spaces, $\pi^{\et}_1 (X,x)$ is the pro-finite completion of the ordinary fundamental group of $X$ i.e.
\begin{equation}\label{eq:1}
\pi^{\et}_1 (X,x) = \widehat{\pi_1 (X,x)} \; .
\end{equation}
For more general spaces, $\pi_1 (X,x)$ carries natural topologies and the relation of $\pi^{\et}_1 (X,x)$ to the (quasi-)topological group $\pi_1 (X,x)$ is also studied in \cite{KS}. Kucharczyk and Scholze make the following use of their \'etale fundamental group. For any field $F$ of characteristic zero containing all roots of unity, they construct a functorial compact connected Hausdorff space $X_F$ whose \'etale fundamental group $\pi^{\et}_1 (X_F ,x)$ is isomorphic to the absolute Galois group $G_F$ of $F$. The image of the usual fundamental group $\pi_1 (X_F , x)$ in $\pi_1^{\et} (X_F, x) \cong G_F$ is then an interesting extra structure of $G_F$.

For the well behaved topological spaces $X$ in \eqref{eq:1} the representations of $\pi_1 (X,x)$ correspond to local systems. This is not at all true in general. In the present note, we therefore study another type of fundamental group for pointed connected topological spaces $(X,x)$. Given a ground field $K$ it is an affine group scheme over $K$ which classifies the local systems of finite dimensional $K$-vector spaces on $X$. More precisely, the $\otimes$-category of such local systems together with the fibre functor $\omega_x$ in $x$ forms a neutralized Tannakian category over $K$. Define $\pi_K (X,x)$ to be its Tannakian dual i.e. 
\[
\pi_K (X,x) = \uAut^{\otimes} (\omega_x) \; .
\]
The group scheme $\pi_0 (\pi_K (X,x))$ of connected components of $\pi_K (X,x)$ is the maximal pro-\'etale quotient $\pi_K (X,x)^{\et}$ of $\pi_K (X,x)$, c.f. \cite[6.7]{W}. In section \ref{sec:3} we show that it is canonically isomorphic to $\pi^{\et}_1 (X,x)$ viewed as a pro-\'etale group scheme over $K$,
\begin{equation}
\label{eq:2}
\pi_K (X,x)^{\et} = \pi^{\et}_1 (X,x) /_K \; .
\end{equation}

For path-connected, locally path-connected and semi-locally simply connected spaces, $\pi_K (X,x)$ is isomorphic to the pro-algebraic completion over $K$ of the ordinary fundamental group
\begin{equation}
\label{eq:3}
\pi_K (X,x) = \pi_1 (X,x)^{\alg} \; .
\end{equation}
Recall that the last condition means that every point of $X$ has a neighborhood $U$ such that every loop in $U$ is nullhomotopic in $X$. See \cite[\S\,2]{BL} for the pro-algebraic completion of a group.

There are very interesting structural results about $\pi_K (X,x)$ for K\"ahler manifolds in \cite{P}, \cite{S}, Section 6. In the present note we focus on those properties of $\pi_K (X,x)$ that hold for very general, not even locally connected spaces $X$.

Section \ref{sec:2} is devoted to structural results about $\pi_K (X,x)$ and its algebraic quotients -- the monodromy group schemes of local systems. We also construct a certain pseudo\nobreakdash-torsor $P_X$ for the pro-discrete group $\pi_K (X,x) (K)$ which can serve as a replacement for the universal covering of $X$. Pullback to $P_X$ trivializes all local systems of finite dimensional $K$-vector spaces on $X$. 

It turns out that $\pi_K (X,x)$ is the projective limit of the Zariski closures of discrete subgroups of $\GL_r (K)$ for $r \ge 1$. In particular $\pi_K (X,x)$ is reduced even if the characteristic of $K$ is positive. This is easy to show if the connected topological space $X$ is also locally connected, mainly because for such spaces the connected components of a covering are again coverings. For the proof in the general case, we adapt a basic construction in algebraic geometry due to Nori \cite{N} to our situation.

Kucharczyk and Scholze show that $\pi^{\et}_1 (X,x)$ commutes with projective limits of connected compact Hausdorff spaces. In section \ref{sec:4} we prove a corresponding result for $\pi_K (X,x)$. This allows to calculate $\pi_K (X,x)$ for certain solenoidal spaces. These examples show that $\pi_K (X,x)$ can be non-trivial in cases where the \v{C}ech fundamental group is trivial. We also give some relations of $\pi_K (X,x)$ to cohomology, and end with some open questions.

Our ultimate motivation for introducing $\pi_K (X,x)$ is the hope to relate the motivic Galois group $G_{\Mh_F}$ over $\C$ to the pro-algebraic group $\pi_{\C} (X,x)$ of a suitable topological space $X$, generalizing the basic idea of \cite{KS}. This will not work with the space $X_F$ of \cite{KS} because as pointed out in the introduction of \cite{KS} the Steinberg relations do not hold in the rational cohomology of $X_F$. 

Generalizing Grothendieck's pro-finite fundamental group of a pointed topos which classifies finite coverings, Kennison has introduced an internal fundamental group of an (unpointed) topos using torsors which generalizes the functor $\Hom (\pi_1 (X) , -)$ \cite{K}. He does not use the Tannakian formalism but the respective fundamental groups could be related.  

I am grateful to the Newton Institute where part of this note was written and to Peter Scholze for explaining an argument in \cite{KS}. I would like to thank the referee for helpful comments, in particular improving Proposition \ref{t22} c).

\section{The pro-algebraic fundamental group} \label{sec:2}
Given a field $K$ with the discrete topology, a flat $K$-vector bundle on a topological space $X$ is a continuous map $\pi : E \to X$ whose fibres are finite dimensional $K$-vector spaces and such that locally on $X$ the map $\pi$ is isomorphic to the projection $X \times K^r \to X$ for some $r \ge 0$. Note that the transition functions between local trivializations are locally constant since $K$ has the discrete topology. The suffix ``flat'' is not really necessary. However, for fields $K$ like $\R$ or $\C$ which usually carry a different topology, ``flat'' serves as a reminder to view $K$ with the discrete topology. We write $\Gamma (X,E)$ for the $K$-vector space of continuous sections of $E$. Let $\uFl (X) = \uFl_K (X)$ denote the category of flat $K$-vector bundles with the obvious morphisms. We will often use the fact that a locally constant map from a connected topological space to a set is constant. The sheaf of continuous sections of a flat $K$-vector bundle is a local system $\Eh$ of finite-dimensional $K$-vector spaces on $X$ i.e. a sheaf of $K$-vector spaces on $X$ which is locally isomorphic to the constant sheaf $\uK^r$ for some integer $r \ge 0$. Let $\uLoc (X) = \uLoc_K (X)$ be the category of local systems of finite dimensional $K$-vector spaces. The functor from $\uFl (X)$ to $\uLoc (X)$ sending $E$ to $\Eh$ and correspondingly on morphisms is a equivalence of categories. A quasi-inverse is given by sending $\Eh$ to its espace \'etal\'e $E$ over $X$, c.f. \cite{B} Ch. I, 1.5. If $X$ is connected both categories are abelian. Note here that for a morphism of flat $K$-vector bundles $\varphi : E \to E'$ over $X$ the rank of the kernel $\ker \varphi_x$ is locally constant as a function of $x \in X$ and hence constant. If a group $G$ acts on a topological space $Y$ by homeomorphisms, we write $\uLoc^G_K (Y)$ for the category of $G$-locally constant sheaves of finite dimensional $K$-vector spaces. It is equivalent to the category $\uFl^G_K (Y)$ of flat finite rank $K$-vector bundles $E$ with a continuous $G$-action on the total space over the $G$-action on $Y$.

For the definition of a rigid abelian tensor category $\Th$ over $K$ we refer to \cite{DM} 2.1. It is an abelian category with a functor $\otimes : \Th \times \Th \to \Th$, a unit object $1$ and a dual object $E^{\vee}$ for any object $E$ together with morphisms $\ev : E \otimes E^{\vee} \to 1$ and $\delta : 1 \to E \otimes E^{\vee}$. All these data have to satisfy several compatibility conditions and all objects are reflexive $E \silo E^{\vee\vee}$. Moreover an isomorphism $K \silo \End (1)$ is part of the structure. For a connected topological space $X$, the categories $\uFl_K (X)$ and $\uLoc_K (X)$ are naturally rigid abelian tensor categories over $K$, and $E \mapsto \Eh$ is a tensor equivalence. The unit object is given by the trivial line bundle $\uK$ resp. the constant sheaf $\uK$. 

A neutral Tannakian category over $K$ is a rigid abelian tensor category $\Th$ over $K$ which admits a faithful $K$-linear exact $\otimes$-functor $\omega$ into the category $\uVec_K$ of finite dimensional vector spaces over $K$. Given such a ``fibre-functor'' $\omega$, the $\otimes$-functor $\Th$ is tensor-equivalent to the tensor category $\uRep_K (G)$ of finite dimensional $K$-representations of the affine $K$-group scheme $G = \uAut^{\otimes} (\omega)$, the Tannakian dual of $(\Th , \omega)$. Under the equivalence the fibre functor $\omega$ on $\Th$ becomes the forgetful functor on $\uRep_K (G)$. Here, by definition we have for any $K$-algebra $R$
\[
G (R) = \uAut^{\otimes} (\omega) (R) = \Aut^{\otimes} (\phi_R \verk \omega) \; ,
\]
where $\phi_R = R \otimes_- : \uVec_K \to \uMod_R$. See \cite{DM} Theorem 2.11 for more details. For an object $E$ of $\Th$ the full $\otimes$-subcategory $\langle E \rangle^{\otimes}$ generated by $E$ is defined as the full subcategory of $\Th$ of objects isomorphic to a subquotient of $Q (E , E^{\vee})$ for some $Q \in \N [t,s]$ where $\N = \{ 0 , 1 , 2 , \ldots \}$. For a point $x$ of a topological space $X$ we have the usual fibre functor
\[
\omega_x : \uFl_K (X) \longrightarrow \uVec_K
\]
sending $E$ to $E_x$ and $\varphi : E \to E'$ to $\varphi_x : E_x \to E'_x$. It is a $K$-linear exact tensor functor.

\begin{prop} 
\label{t21}
For any pointed connected topological space $(X,x)$ and any field $K$, the category $\uFl_K (X)$ is neutral Tannakian with fibre functor $\omega_x$.
\end{prop}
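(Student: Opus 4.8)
The plan is to observe that almost all of the required structure has already been assembled, so that the proof reduces to a single substantive point. By the discussion preceding the proposition, $\uFl_K(X)$ is a rigid abelian $\otimes$-category over $K$; here connectedness of $X$ is used both for abelianness and for the identification $K \silo \End(\uK)$ of the endomorphisms of the unit object, which holds because a locally constant $K$-valued function on the connected space $X$ is constant. Likewise $\omega_x$ has already been recorded as a $K$-linear exact $\otimes$-functor, and it plainly takes values in $\uVec_K$ since every object of $\uFl_K(X)$ has finite rank. Matching this against the definition of a neutral Tannakian category, the only property left to verify is that $\omega_x$ is faithful.

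To prove faithfulness I would take a morphism $\varphi : E \to E'$ in $\uFl_K(X)$ with $\omega_x(\varphi) = \varphi_x = 0$ and show $\varphi = 0$. The key is the function $y \mapsto \rk \varphi_y = \dim_K \imm (\varphi_y)$ on $X$. Because $E$ and $E'$ are locally trivial with locally constant transition functions, $\varphi$ is locally given by a matrix of locally constant $K$-valued functions, so the rank of $\varphi_y$ is locally constant in $y$. As noted in the text for $\dim \ker \varphi_y$, local constancy on the connected space $X$ forces this function to be constant; since it vanishes at $x$ by hypothesis, it vanishes identically, so $\varphi_y = 0$ for every $y \in X$.

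Finally, a morphism of flat $K$-vector bundles that induces the zero map on each fibre is the zero morphism, since such a morphism is a continuous map $E \to E'$ over $X$ carrying each fibre into the zero section. Therefore $\varphi = 0$ and $\omega_x$ is faithful. Combining this with the structure already in place shows that $(\uFl_K(X), \omega_x)$ satisfies the definition of a neutral Tannakian category over $K$.

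The only genuinely non-formal step is the faithfulness of $\omega_x$, and its entire content is the connectedness of $X$; everything else is bookkeeping against the definitions recalled above. I do not anticipate a serious obstacle, the one point to handle with care being the passage from ``zero on every fibre'' to ``zero as a bundle morphism'', which relies on the description of morphisms in $\uFl_K(X)$ as fibrewise-linear continuous maps over $X$.
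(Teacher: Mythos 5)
Your proposal is correct and follows essentially the same route as the paper: both reduce the statement to faithfulness of $\omega_x$ (the rest being the already-established rigid abelian $\otimes$-structure) and prove it by observing that $y \mapsto \rk \varphi_y$ is locally constant, hence constant on the connected space $X$, so $\varphi_x = 0$ forces $\varphi = 0$. Your additional remarks justifying local constancy of the rank and the passage from fibrewise vanishing to vanishing of the morphism are fine elaborations of points the paper leaves implicit.
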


\begin{proof}
We have to show that $\omega_x$ is faithful. Let $\varphi : E \to E'$ be a morphism in $\uFl_K (X)$. The function $X \to \Z$ sending $y \in X$ to the rank of $\varphi_y$ is locally constant, hence constant on $X$. If $\omega_x (\varphi) = 0$ i.e. $\rk \varphi_x = 0$ we therefore have $\rk \varphi_y = 0$ for all $y \in X$ and hence $\varphi = 0$. 
\end{proof}

For a connected space $X$ and a point $x \in X$, we denote the Tannakian dual of $(\uFl_K (X) , \omega_x)$ by $\pi_K (X,x)$. It is an affine group scheme over $K$ whose finite dimensional $K$-representations classify the flat bundles on $X$. 

\begin{cor}
\label{t23}
Let $X$ be a connected topological space and let $E$ be a trivial bundle in $\uFl_K (X)$. Then any subquotient $F$ of $E$ in $\uFl_K (X)$ is a trivial flat bundle.
\end{cor}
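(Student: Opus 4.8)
The plan is to reduce the statement to the case of a \emph{sub}object, to settle that case by a direct local-triviality argument, and then to recover quotients and arbitrary subquotients by rigidity. Since $X$ is connected, $\uFl_K(X)$ is abelian, so a subquotient $F$ of $E$ is of the form $F = A/B$ with $B \subseteq A \subseteq E$. It therefore suffices to prove two facts: (i) every subobject of a trivial bundle is trivial, and (ii) every quotient of a trivial bundle is trivial. Granting these, $A$ is trivial by (i) and $F = A/B$ is trivial by (ii). We may assume throughout that $E = \uK^r$.

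For (i), let $\iota : F \hookrightarrow \uK^r$ be a subobject and consider the assignment $x \mapsto \imm(\iota_x) \subseteq K^r$. I would first show it is locally constant: around any point choose a connected open $U$ over which $F$ is trivial, $F|_U \cong \uK^s|_U$; then $\iota|_U$ is given by a continuous, hence (as $K$ is discrete) locally constant, hence constant map $U \to \Hom(K^s, K^r)$, whose value $A_0$ is injective because $\iota$ is a monomorphism (its kernel, a flat subbundle, has fibre $\ker \iota_x = 0$). Thus $\imm(\iota_x) = \imm(A_0)$ is one fixed subspace on $U$, so $x \mapsto \imm(\iota_x)$ is a locally constant map into the set of $s$-dimensional subspaces of $K^r$, and by connectedness of $X$ it is constant, equal to a single $V \subseteq K^r$. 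Since $(\imm\iota)_x = \imm(\iota_x) = V$ for all $x$, and a subsheaf of $\uK^r$ is recovered as the sheaf of sections whose germs lie in the prescribed stalks, $\imm\iota$ is exactly the constant subsheaf $\underline{V} \cong \uK^s$; as $\iota$ is a monomorphism, $F \cong \imm\iota$ is trivial.

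For (ii), I would invoke rigidity of $\uFl_K(X)$: the duality $G \mapsto G^{\vee}$ is exact and contravariant with $G^{\vee\vee} \cong G$, so a quotient $\uK^r \twoheadrightarrow F$ dualizes to a monomorphism $F^{\vee} \hookrightarrow (\uK^r)^{\vee} \cong \uK^r$. By (i), $F^{\vee}$ is trivial, and hence so is $F \cong F^{\vee\vee}$.

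The only genuinely non-formal step is (i): the local constancy of the image subspace together with the identification of a constant-stalk subsheaf with $\underline{V}$; the reductions in the first and third paragraphs are purely categorical. I would flag one tempting shortcut and why I would not rely on it alone. Via Proposition \ref{t21} one may pass to $\uRep_K(G)$, where trivial bundles ought to correspond to trivial representations, which are visibly closed under subquotients; but identifying ``trivial bundle'' with ``trivial $G$-representation'' under that equivalence rests on essentially the same local analysis as (i). I would therefore keep the direct argument as the backbone and use the Tannakian picture only as a conceptual cross-check.
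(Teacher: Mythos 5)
The one step in your argument that genuinely fails is in (i), where you ``choose a connected open $U$ over which $F$ is trivial.'' The corollary is stated for arbitrary connected topological spaces, which need not be locally connected, and in that generality such a $U$ need not exist: local triviality produces arbitrarily \emph{small} trivializing opens, but in a space like the solenoids of Section \ref{sec:4} every small open set is homeomorphic to (open arc) $\times$ (Cantor set) and is therefore disconnected, and nothing guarantees that any connected open set (if one exists at all) trivializes $F$. This is not a fringe case for this paper: the corollary is invoked in the proof of Proposition \ref{t22} a) with $X$ replaced by a covering space $P$, and such spaces (e.g.\ $\R \times \Z_p$ in the Remark following that proposition) are again not locally connected. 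The repair is easy and stays inside your argument: take $U$ to be any trivializing open, connected or not. With respect to a trivialization $F|_U \cong \uK^s|_U$ and the global trivialization $E = \uK^r$, the morphism $\iota|_U$ is given by a continuous map $U \to \Hom_K (K^s , K^r)$ into a discrete target, hence a \emph{locally constant} (not necessarily constant) one; consequently $x \mapsto \imm (\iota_x)$ is locally constant on every trivializing open, hence locally constant on $X$, and connectedness of $X$ itself --- the only connectedness actually available --- forces it to be constant. With this change the rest of your proof is sound: the identification of a subsheaf of $\uK^r$ all of whose stalks equal a fixed $V$ with the constant subsheaf $\underline{V}$, the duality argument for quotients, and the reduction of a subquotient to a quotient of a subobject are all correct.

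Concerning your last paragraph: the paper's own proof is exactly the shortcut you flagged and set aside. It fixes $x$, notes that $E$ corresponds to a trivial representation $\rho_E$ of $\pi_K (X,x)$ on $E_x$, and observes that all subquotients of $\rho_E$ are trivial. Your stated reason for distrusting this route --- that identifying ``trivial bundle'' with ``trivial representation'' rests on essentially the same local analysis as (i) --- is not accurate. In both categories, ``trivial'' means isomorphic to a finite direct sum of copies of the unit object ($\uK$ on one side, $K$ with trivial action on the other), and any $K$-linear tensor equivalence preserves the unit object and finite direct sums; moreover subobjects and quotients of a trivial representation are visibly trivial at the level of comodules. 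So the paper's proof is purely formal once Proposition \ref{t21} and the duality theorem of \cite{DM} are in place. What your corrected argument buys is independence from the Tannakian machinery, using only elementary sheaf theory plus rigidity; what the paper's buys is that the corollary becomes a two-line consequence of the formalism it has already set up.
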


\begin{proof}
Fix a point $x \in X$. The bundle $E$ corresponds to a trivial representation $\rho_E$ of $\pi_K (X,x)$ on $E_x$. All subquotients of $\rho_E$ are trivial as well and hence $F$ is a trivial flat vector bundle. 
\end{proof}

For $E$ in $\uFl_K (X)$ the Tannakian dual of $(\langle E \rangle^{\otimes} , \omega_x)$ is a closed (algebraic) subgroup scheme of $\GL_{E_x}$ over $K$, the {\it monodromy group scheme} of $E$,
\[
G_E = G_{E,x} = \uAut^{\otimes} (\omega_x \mid \langle E \rangle^{\otimes}) \subset \GL_{E_x} \;.
\]
The induced morphism $\pi_K (X,x) \to G_E$ is faithfully flat by \cite{DM}, Proposition 2.21. Hence $G_E$ is the image of the representation $\pi_K (X,x) \to \GL_{E_x}$ corresponding to $E$.

We now fix some notations. Let $G$ be a topological group and $P$ a topological space on which $G$ acts continuously from the right. Let $G$ act trivially on a topological space $X$ and let $\pi : P \to X$ be a continuous $G$-equivariant map. We call $P$ a {\it (surjective) pseudo $G$-torsor} over $X$ if the continuous map
\[
P \times G \longrightarrow P \times_X P \, , \, (p , \sigma) \mapsto (p , p^{\sigma})
\]
is a homeomorphism (and $\pi$ is surjective). If in addition $\pi$ has continuous sections locally, then $P$ is a {\it $G$-torsor}, or {\it principal homogenous $G$-bundle}. Equivalently, $P$ is a trivial $G$-torsor locally. If $G$ has the discrete topology then any $G$-torsor is in particular a covering. Connected $G$-torsors for a discrete group $G$ are the same as Galois coverings with group $G$. 

We need the following fact which follows from \cite{G} 5.3 by restricting to the subcategories of locally constant sheaves and noting that the inverse image of sheaves commutes with tensor products. Let $G$ be a discrete group and let $\pi : P  \to X$ be a $G$-torsor. In particular, $\pi$ is a covering, hence a local homeomorphism and $G$ acts without fixed points and properly discontinously such that $P / G \silo X$. There is an equivalence of $\otimes$-categories between $\uLoc_K (X)$ and $\uLoc^G_K (P)$. The functors
\begin{equation}
\label{eq:4}
\uLoc_K (X) \overset{\displaystyle\xrightarrow{\;\pi^{-1}}}{\xleftarrow[\; \pi^G_*\;]{}} \uLoc^G_K (P)
\end{equation}
are quasi-inverses of each other. Here $\pi^G_*$ is defined by setting
\[
(\pi^G_* \Fh) (U) = \Fh (\pi^{-1} (U))^G \quad \text{for} \; U \subset X \; \text{open} \; .
\]
There are corresponding quasi-inverse functors $\pi^*$ and $\pi^G_*$ between $\uFl_K (X)$ and $\uFl^G_K (P)$. For a representation of $G$ on a finite dimensional $K$-vector space $V$, the bundle $P \times V$ with the diagonal $G$-action is in $\uFl^G_K (P)$ and we have
\[
\pi^G_* (P \times V) = P \times^G V := (P \times V) / G \; .
\]
Let $G$ be a subgroup of $\GL_r (K)$ with the discrete topology and assume that the bundle $E$ in $\uFl_K (X)$ has a reduction of structure group to $G$. This means that there is a $G$-torsor $\pi : P \to X$ such that $E \cong P \times^G K^r$. Let $ \oG$ be the Zariski closure of $G$ in the group scheme $\GL_r$ over $K$. In this situation we have the following information:

\begin{prop}
\label{t22}
a) For a suitable isomorphism $E_x = K^r$ of $K$-vector spaces, $G_{E,x}$ is a closed subgroup scheme of $\oG$ in $\GL_{E_x} = \GL_r$. If $P$ is connected we even have $G_{E,x} = \oG$.\\
b) If the connected topological space $X$ is also locally connected, then the structure group of $E$ can be reduced to a subgroup $G$ of $\GL_r (K)$ such that $P$ is connected, and hence $G_{E,x} = \oG$. \\
c) In the situation of b), the affine group scheme $\pi_K (X,x)$ is a projective limit of algebraic groups of the form $\oG$ where the $G$'s are discrete subgroups of $\GL_r (K)$ for varying $r$. In particular $\pi_K (X,x)$ is geometrically reduced.
\end{prop}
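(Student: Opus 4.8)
The plan is to handle the three parts in order, deriving (c) formally from (b) and (b) from (a), so that the real content sits in (a). For the first claim of (a) I would package the reduction of structure group as a tensor functor and dualize. The $G$-torsor $\pi : P \to X$ together with restriction of representations along $G \subseteq \oG$ gives an exact $K$-linear tensor functor
\[
F : \uRep_K(\oG) \longrightarrow \uFl_K(X), \qquad W \longmapsto P \times^G W,
\]
sending the standard representation $K^r$ of $\oG \subseteq \GL_r$ to $E$. Since $\oG$ is closed in $\GL_r$, the category $\uRep_K(\oG)$ is tensor-generated by $K^r$, so $F$ factors through $\langle E \rangle^{\otimes}$ and hits a generator. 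Fixing a point $p_0 \in \pi^{-1}(x)$ gives an isomorphism $W \silo (P \times^G W)_x$, $w \mapsto [p_0, w]$, natural in $W$; this identifies $\omega_x \circ F$ with the forgetful fibre functor on $\uRep_K(\oG)$ and fixes the isomorphism $E_x = K^r$. By Tannakian duality $F$ then induces $f : G_{E,x} \to \oG$, and chasing $K^r$ shows that the composite $G_{E,x} \xrightarrow{f} \oG \hookrightarrow \GL_r$ is the tautological inclusion $G_{E,x} \subseteq \GL_{E_x} = \GL_r$. As the surjection $\Oh(\GL_r) \to \Oh(G_{E,x})$ factors through $\Oh(\oG)$, the map $\Oh(\oG) \to \Oh(G_{E,x})$ is surjective, so $f$ is a closed immersion.

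For the equality when $P$ is connected I would verify the two conditions of \cite{DM}, Proposition 2.21. Closedness already gives condition (b) there, since every object of $\langle E \rangle^{\otimes}$ is a subquotient of some $Q(E,E^\vee) = F(Q(K^r,(K^r)^\vee))$. For faithful flatness I must show $F$ is fully faithful and that subobjects lift, and here connectedness of $P$ enters twice. A morphism $P \times^G W_1 \to P \times^G W_2$ pulls back under $\pi^{*}$ to a $G$-equivariant morphism of the trivial bundles $P \times W_i$; over the connected space $P$ this is a single locally constant, hence constant, linear map $\phi : W_1 \to W_2$, and equivariance makes $\phi$ a $G$-morphism. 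Likewise a flat subbundle of $P \times W$ over connected $P$ has constant fibre, so equals $P \times W'$ for a fixed $G$-stable $W' \subseteq W$. The decisive point is then that $G$ is Zariski dense in $\oG$: the conditions ``$\phi$ commutes with the action'' and ``$W'$ is stable'' are Zariski closed, so $G$-equivariance upgrades to $\oG$-equivariance. This yields $\Hom_{\oG}(W_1,W_2) = \Hom_G(W_1,W_2)$ and shows every subobject of $F(W)$ comes from a $\oG$-subrepresentation; hence $f$ is faithfully flat as well, so $G_{E,x} = \oG$.

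For (b) I would start from any reduction $\pi' : P' \to X$ to $G' \subseteq \GL_r(K)$; as $G'$ is discrete, $\pi'$ is a covering. Because $X$ is connected and locally connected the components of $P'$ are open, and the images of distinct $G'$-orbits of components are disjoint open sets covering the connected $X$, so there is a single orbit and one component $P$ already surjects onto $X$. With $G := \{\, g \in G' : Pg = P \,\}$ the restriction $\pi|_P : P \to X$ is a connected $G$-torsor, and the induced-torsor identity $P' = P \times^G G'$ gives $E \cong P' \times^{G'} K^r \cong P \times^G K^r$, a reduction to $G$ with $P$ connected; part (a) then yields $G_{E,x} = \oG$. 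Finally (c) follows formally: $\uFl_K(X)$ is the filtered union of the $\langle E \rangle^{\otimes}$, so $\pi_K(X,x) = \varprojlim_E G_E$, and by (b) each $G_E$ is a $\oG$ for a discrete $G \subseteq \GL_r(K)$. Each such $\oG$ is reduced, being a Zariski closure of $K$-points, and since $\Oh(\pi_K(X,x)) = \varinjlim_E \Oh(G_E)$ is a filtered colimit of reduced rings, it is reduced; thus $\pi_K(X,x)$ is reduced.

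I expect the main obstacle to be the equality $G_{E,x} = \oG$ in (a). Producing the closed immersion is formal, but identifying its image with all of $\oG$ is precisely where one must combine connectedness of $P$ (to force morphisms and subbundles to be constant) with Zariski density of $G$ in $\oG$ (to pass from $G$-invariance to $\oG$-invariance), and feed both facts into the exact criterion of \cite{DM}, Proposition 2.21; parts (b) and (c) are then a covering-space reduction and a formal limit argument, respectively.
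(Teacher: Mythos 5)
Your proposal is correct, and its geometric core coincides with the paper's: pass to the ($G$-equivariant category on the) torsor $P$, use that locally constant maps on connected $P$ are constant, and identify the monodromy group with the Zariski closure $\oG$; parts b) and c) are also essentially the paper's arguments, with your orbit-image argument for surjectivity of $\pi|_P$ replacing the paper's direct open-and-closed argument (both hinge on local connectedness making components of $P'$ open). Where you genuinely diverge is in the Tannakian bookkeeping for a). The paper does not dualize a functor out of $\uRep_K(\oG)$; it proves an equivalence of categories $\langle \rho \rangle^{\otimes} \simeq \langle P \times K^r \rangle^{\otimes}$, where $\langle \rho \rangle^{\otimes} \subset \uRep_K(G)$ is generated by the inclusion $\rho : G \hookrightarrow \GL_r(K)$ of the \emph{discrete} group, using Corollary \ref{t23} for essential surjectivity, and then quotes the proof of \cite{DM} Proposition 2.8 for the identification $\oG = \uAut^{\otimes}(\omega \mid \langle \rho \rangle^{\otimes})$ — so the Zariski-density argument is delegated to that citation. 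You instead work with the algebraic representation category $\uRep_K(\oG)$ (tensor-generated by the standard representation since $\oG \subset \GL_r$ is closed), produce the closed immersion $G_{E,x} \to \oG$ by the coordinate-ring factorization, and get the equality from the faithful-flatness criterion of \cite{DM} Proposition 2.21, making explicit the two density upgrades (equivariance of morphisms, stability of subspaces) from $G$ to $\oG$. Your route is more self-contained — it isolates exactly where density enters instead of relying on the internals of the proof of \cite{DM} Proposition 2.8 — at the cost of verifying both halves of Proposition 2.21; the paper's route is shorter because the category equivalence identifies the two group schemes at once and reuses Corollary \ref{t23}. Both are complete proofs.
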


\begin{proof}
a) We are given a bundle $E$ in $\uFl_K (X)$ and a $G$-torsor $\pi : P \to X$ for a subgroup $G \subset \GL_r (K)$ such that $E \cong P \times^G K^r$ and hence $\pi^* E \cong P \times K^r$ in $\uFl^G_K (P)$. Hence we have an equivalence of tensor categories
\[
\langle E \rangle^{\otimes} \cong \langle P \times K^r  \rangle^{\otimes} \; . 
\]
Here on the right we mean the full $\otimes$-subcategory of $\uFl^G_K (P)$ generated by the $G$-bundle $P \times K^r$ over $P$. Choosing a point $p \in P$ with $\pi (p) = x$ we get an isomorphism of affine group schemes over $K$
\[
G_{E,x} = \uAut^{\otimes} (\omega_x \mid \langle E \rangle^{\otimes}) \cong \uAut^{\otimes} (\omega_p \mid \langle P \times K^r \rangle)^{\otimes}) =: \tG \; .
\]
The point $p$ and the isomorphism $E \cong P  \times^G K^r$ determine an isomorphism
\[
E_x \cong \pi^{-1} (x) \times^G K^r \cong \{ p \} \times K^r \cong K^r \; .
\]
Viewing this isomorphism as an identification, the isomorphism $G_{E,x } \cong \tG$ becomes an equality $G_{E,x} = \tG$ of closed subgroup schemes of $\GL_r$. 

Let $\rho : G  \hookrightarrow \GL_r (K)$ denote the inclusion and consider the Tannakian subcategory $\langle \rho \rangle^{\otimes}$ of $\uRep_K (G)$ generated by $\rho$. It follows from the proof of \cite{DM} Proposition 2.8 that we have
\[
\oG = \uAut^{\otimes} (\omega \mid \langle \rho\rangle^{\otimes}) \; .
\]
Here $\omega$ is the fibre functor forgetting the $G$-module structure. The natural $\otimes$-functor
\[
\alpha : \uRep_K (G) \longrightarrow \uFl^G_K (P) \; , \; V \longmapsto P \times V \quad \text{(diagonal $G$-action)}
\]
sends $\rho$ to $P \times K^r$ and therefore restricts to a $\otimes$-functor
\[
\alpha_{\rho} : \langle \rho \rangle^{\otimes} \longrightarrow \langle P \times K^r\rangle^{\otimes} \; .
\]
The functor $\alpha$ is faithful and if $P$ is connected even fully faithful. Namely for representations $\rho_i : G \to \GL (V_i)$ in $\uRep_K (G)$ for $i = 1,2$, a morphism
\[
\varphi : \alpha (\rho_1) = P \times V_1 \longrightarrow P \times V_2 = \alpha (\rho_2)
\]
is a continuous map of the form $\varphi (p , v) = (p , \varphi_p (v))$ for $p \in P , v \in V_1$. Here $\varphi_p \in \Hom_K (V_1 , V_2)$ for each $p \in P$. The continuous map $p \mapsto \varphi_p$ is locally constant hence constant on the connected components of $P$, since $\Hom_K (V_1, V_2)$ carries the discrete topology. Moreover we have
\[
\rho_2 (\sigma) (\varphi_p (v)) = \varphi_{p^{\sigma^{-1}}} (\rho_1 (\sigma) v) \quad \text{for} \; v \in V_1 , p \in P , \sigma \in G \; .
\]
The morphisms $\varphi$ in the image of $\alpha$ are those where $\varphi_p$ is independent of $p$.
If $P$ is connected, it follows that $\alpha$ and hence $\alpha_{\rho}$ are fully faithful. Applying Corollary \ref{t23} to $X = P$ it also follows that $\alpha_{\rho}$ is essentially surjective, and hence an equivalence of $\otimes$-categories. It follows that $\oG = \tG$ as closed subgroup schemes of $\GL_r$ over $K$, and hence $\oG = G_{E , x}$. If $P$ is not necessarily connected we only obtain a morphism $\tG \to \oG$ of affine group schemes, which is compatible with the closed immersions of $\tG$ and $\oG$ into $\GL_r$. Hence $G_{E,x} = \tG \to \oG$ is a closed immersion as well.\\
b) Choose a connected component $P_0$ of $P$ and let $G_0$ be the stabilizer of $P_0$ for the $G$-action on the set of connected components of $P$. Then the map
\begin{equation}
\label{eq:5}
P_0 \times G_0 \silo P_0 \times_X P_0 \; , \; ( p , \sigma) \mapsto (p , p^{\sigma})
\end{equation}
is a homeomorphism. This follows from the corresponding property of the $G$-action on $P$, because $p' = p^{\sigma}$ for $p , p' \in P_0$ and $\sigma \in G$ implies that $\sigma \in G_0$. The component $P_0$ is open in $P$. Namely, for $p_0 \in P_0$ choose a connected open neighborhood $x_0 \in U \subset X$ where $x_0 = \pi (p_0)$ such that $P \, |_U := \pi^{-1} (U) \to U$ is $G$-equivariantly homeomorphic to $U \times G$ over $U$. Viewing this as an identification we have $p_0 = (x_0 , g)$ for some $g \in G$. Since $U \times \{ g \}$ is connected and $P_0$ is the connected component of $p_0$, it follows that $U \times \{ g \} \subset P_0$. But $U \times \{ g \}$ is open in $P$ since $G$ carries the discrete topology and hence $P_0$ is open. By \eqref{eq:5} the action of $G_0$ on the fibres of $P_0 \to X$ is simply transitive and hence the inclusion $U \times \{ g \} \subset P_0$ implies that there is a $G_0$-equivariant homeomorphism $P_0 \, |_U = U \times g G_0$. Thus $P_0$ is a $G_0$-torsor if $\pi (P_0) = X$. Since $\pi$ and $P_0$ are open $\pi (P_0)$ is open. We show that $\pi (P_0)$ is closed. Assume $y \in X \setminus \pi (P_0)$. Choose an open connected neighborhood $y \in U \subset X$ such that $P \, |_U = U \times G$ as above. If $\pi (P_0) \cap U \neq \emptyset$, then $(x,g) \in P_0$ for some $x \in U , g \in G$, and hence $U \times  \{ g \} \subset P_0$ which implies $y \in \pi (P_0)$ a contradiction. Thus $\pi (P_0) \cap U = \emptyset$ and therefore $X \setminus \pi (P_0)$ is open. Since $\pi (P_0) \neq \emptyset$ is open and closed and $X$ is connected, it follows that $\pi (P_0) = X$. Since $(\pi \, |_{P_0})^* E$ is trivial the structure group of $E$ can be reduced to $G_0$ and using a) we have $G_{E,x} = \oG_0$ after an appropriate identification $E_x = K^r$. \\
The first assertion in c) is a formal consequence of b) since $\uFl_K (X)$ is the filtered inductive limit of the Tannakian subcategories $\langle E \rangle^{\otimes}$. Note that given $E_1 , E_2$, we have $\langle E_i\rangle^{\otimes} \subset \langle E\rangle^{\otimes}$ where $E = E_1 \oplus E_2$. For the algebraic group $\oG = \spec A$ the $K$-algebra $A$ has no non-zero nilpotent elements. Since $G \subset \oG (K)$ is dense in $\oG$ by definition, it follows that for every $0 \neq a \in A$ there is a map of $K$-algebras $\chi_g : A \to K$ corresponding to an element $g \in G$ such that $\chi_g (a) = a (g) \neq 0$. It follows that we have an injective map of $K$-algebras $A \to K^G , a \mapsto (\chi_g (a))_g$. Tensoring with a finite extension $L / K$ we obtain $L$-algebra injections
\[
A \otimes_K L \hookrightarrow K^G \otimes_K L \hookrightarrow L^G \; .
\]
Hence $A \otimes_K L$ has no non-zero nilpotent elements. Thus $\oG = \spec A$ and therefore also $\pi_K (X ,x)$ are geometrically reduced over $K$. 
\end{proof}

\begin{rem}
Over non-locally connected topological spaces $X$ the connected components $P_0$ of a $G$-torsor $P$ need not map surjectively to $X$. Consider the connected solenoid
\[
X = \varprojlim (\ldots  \xrightarrow{p} \R / \Z \xrightarrow{p} \R / \Z) = \R \times^{\Z} \Z_p \; .
\]
The covering $P = \R \times \Z_p \to X$ is a $\Z$-torsor with connected components $P_a = \R \times \{ a \} $ for $a \in \Z_p$ all of which have trivial stabilizer groups in $\Z$. The images of the $P_a$'s are the path-connected components of $X$, of which there are uncountably many. No $P_a$ maps surjectively to $X$. I learned about this example in a post by Taras Banakh on MathOverflow.
\end{rem}

Using ideas of Nori from another context \cite{N}, we will now construct pseudo-torsors trivializing flat bundles. In particular, for arbitrary connected topological spaces and algebraically closed $K$, we get a surjective pseudo-torsor for a pro-discrete topological group which can serve as a replacement for the universal covering space, the latter existing only for well behaved spaces. In the examples that we are aware of, our pseudo-torsors are actually torsors. We do not know if this is true in general.

The construction goes as follows. Let $(X , x)$ be a pointed connected topological space and $E$ a flat bundle in $\uFl_K (X)$. Consider a composition of morphisms of group schemes over the field $K$
\begin{equation}
\label{eq:6n}
\pi_K (X,x) \overset{\alpha}{\twoheadrightarrow} G_K \overset{\beta}{\twoheadrightarrow} G_E \subset \GL_{E_x} \; ,
\end{equation}
where $\alpha$ and $\beta$ are faithfully flat. The cases $G_K = \pi_K (X,x)$ and $G_K = G_E$ are especially relevant. Consider the full embeddings and the equivalence of $\otimes$-categories
\[
\uRep_K (G_E) \overset{\beta^*}{\hookrightarrow} \uRep_K (G_K) \overset{\alpha^*}{\hookrightarrow} \uRep_K (\pi_K (X,x)) \simeq \uFl_K (X) \; .
\]
They extend to the $\ind$-categories, which are again abelian tensor categories
\begin{equation}
\label{eq:7n}
\uIRep_K (G_E) \overset{\beta^*}{\hookrightarrow} \uIRep_K (G_K) \overset{\alpha^*}{\hookrightarrow} \uIRep_K (\pi_K (X,x)) \simeq \uIFl_K (X) \; .
\end{equation}
Let $H$ be a commutative Hopf algebra over a field. By \cite{W} 3.3 Theorem, every comodule $V$ for $H$ is the directed union of finite-dimensional subcomodules. It follows that for any affine group scheme $\pi_K$ over $K$, the category $\uIRep_K (\pi_K)$ can be identified with the category of comodules for $H = \Gamma (\pi_K , \Oh)$. 

Let $G^{\delta}_K$ be the group scheme $G_K$ together with the trivial (left-)action of $G_K$. Then the multiplication morphism
\begin{equation}
\label{eq:8n}
m : G_K \times G^{\delta}_K \longrightarrow G_K \; , \; (\sigma , \tau) \longmapsto \sigma \tau \quad \text{(on $S$-valued points)}
\end{equation}
is $G_K$-equivariant under the natural left $G_K$-actions. Similarly, let $\A^{\delta}_{E_x}$ be the affine space over $E_x$ with the trivial (left-) action by $G_K$. The left action of $G_K$ via $\beta$ on $\A_{E_x}$ gives a $G_K$-equivariant morphism
\begin{equation}
\label{eq:9n} 
m_E : G_K \times \A^{\delta}_{E_x} \longrightarrow \A_{E_x} \; , \; (\sigma , v) \longmapsto \sigma v \quad \text{(on $S$-valued points).}
\end{equation}
The resulting $G_K$-equivariant morphisms
\begin{equation}
\label{eq:10n}
G_K \times G^{\delta}_K \silo G_K \times G_K \; , \; (\sigma , \tau) \mapsto (\sigma , \sigma \tau)
\end{equation}
and
\begin{equation}
\label{eq:11n}
G_K \times \A^{\delta}_{E_x} \silo G_K \times \A_{E_x} \; , \; (\sigma , v) \mapsto (\sigma , \sigma v)
\end{equation}
are isomorphisms. Passing to global sections with the induced left $G_K$-action (or $\Gamma (G_K)$-comodule structure), we obtain morphisms of commutative unital $K$-algebra objects in $\uIRep_K (G_K) \overset{\alpha^*}{\hookrightarrow} \uIRep_K (\pi_K (X,x))$
\begin{equation}
\label{eq:12n} 
\Delta = m^* : \Gamma (G_K) \longrightarrow \Gamma (G_K) \otimes \Gamma (G_K)^{\delta}
\end{equation}
and
\begin{equation}
\label{eq:13n}
\Delta_E = m^*_E : \Gamma (\A_{E_x}) \longrightarrow \Gamma (G_K) \otimes \Gamma (\A_{E_x})^{\delta} \; ,
\end{equation}
and isomorphisms
\begin{equation}
\label{eq:14n}
\Gamma (G_K) \otimes \Gamma (G_K) \silo \Gamma (G_K) \otimes \Gamma (G_K)^{\delta} \; , \; a \otimes b \mapsto (a \otimes 1) \Delta (b)
\end{equation}
and
\begin{equation}
\label{eq:15n}
\Gamma (G_K) \otimes \Gamma (\A_{E_x}) \silo \Gamma (G_K) \otimes \Gamma (\A_{E_x})^{\delta} \; , \; a \otimes v \mapsto (a \otimes 1) \Delta_E (v) \; .
\end{equation}

Under the tensor equivalence of $\uIRep_K (\pi_K (X,x))$ with $\uIFl_K (X)$ let $A = A (G_K) = A (\alpha)$ and $A_E$ be the commutative $K$-algebra objects in $\uIFl_K (X)$ corresponding to $\Gamma (G_K)$ resp. $\Gamma (\A_{E_x})$. The objects corresponding to $\Gamma (G_K)^{\delta}$ and $\Gamma (\A_{E_x})^{\delta}$ are the trivial bundles of $K$-algebras
\[
\underline{\Gamma (G_K)} = X \times \Gamma (G_K) \quad \text{and} \quad \underline{\Gamma (\A_{E_x})} = X \times \Gamma (\A_{E_x}) \; .
\]
We get morphisms of unital $K$-algebra objects in $\uIFl_K (X)$,
\begin{equation}
\label{eq:16n}
\Delta : A \longrightarrow A \otimes \underline{\Gamma (G_K)}
\end{equation}
and
\begin{equation}
\label{eq:17n}
\Delta_E : A_E \longrightarrow  A\otimes \underline{\Gamma (\A_{E_x})}
\end{equation}
and isomorphisms
\begin{equation}
\label{eq:18n}
A \otimes A \silo A \otimes \underline{\Gamma (G_K)} \; , \; a \otimes b \longmapsto (a \otimes 1) \Delta (b)
\end{equation}
and 
\begin{equation}
\label{eq:19n}
A \otimes A_E \silo A \otimes \underline{\Gamma (\A_{E_x})} \; , \; a \otimes v \longmapsto (a \otimes 1) \Delta (v) \; .
\end{equation}
For an object $B = \varinjlim_i B_i$ in $\uIFl_K (X)$, filtered inductive limit of flat bundles $B_i$ in $\uFl_K (X)$, let
\[
\uHom (B, \uK) = \varprojlim_i \uHom (B_i , \uK) = \coprod_{x \in X} \varprojlim_i \Hom (B_{ix} , K)
\]
be the projective limit of the total spaces of the $\Hom$-bundles $\uHom (B_i , \uK)$. Then $\uHom (B, \uK)$ is a topological space with a continuous surjective map $\pi$ to $X$. If $B$ is a $K$-algebra object in $\uIFl_K (X)$, let $\uHom_{\alg} (B,\uK)$ be the subspace of $\uHom (B,K)$ consisting of fibrewise algebra homomorphisms. Thus its fibre over the point $z \in X$ is
\[
\uHom (B, \uK)_z = \Hom_{\alg} (B_z , K) \; ,
\]
where $B_z$ is the $K$-algebra $B_z = \varinjlim_i B_{iz}$. Already for $X$ a point we see that $B_z$ may be empty. For $B = \underline{\Gamma (G_K)}$ we have
\[
\uHom_{\alg} (\underline{\Gamma (G_K)} , \uK) = X \times \Hom_{\alg} (\Gamma (G_K) , K) = X \times G_K (K) = \underline{G_K (K)} \; .
\]
Here $G_K (K)$ carries the pro-discrete topology. For $B = A$ set
\begin{equation}
\label{eq:20n}
P = P (G_K) = P (\pi_K (X,x) \to G_K) := \uHom_{\alg} (A , \uK) \; .
\end{equation}
Its fibre over the fixed base point $x \in X$ is
\begin{equation}
\label{eq:21n}
P_x = \Hom_{\alg} (\Gamma (G_K) , K) = G_K (K) \; .
\end{equation}

For path-connected spaces using the map \eqref{eq:7} below it is easy to see that $P_z \neq \emptyset$ for all $z \in Z$. If $X$ is connected and $G_K$ is algebraic, we still have $P_z \neq \emptyset$ for $z \in X$, c.f. Theorem \ref{t25n} below. For arbitrary quotients $G_K$, if $K$ is algebraically closed we can use a result of Deligne in \cite{D}, see also \cite{Cou}, \cite{Wib}. He proved that for a Tannakian category over an algebraically closed field $K$, any two fibre functors over $K$ are $\otimes$-isomorphic. In particular there is a $\otimes$-isomorphism $\gamma : \omega_x \silo \omega_z$ on $\uFl_K (X)$. It extends naturally to $\uIFl_K (X)$ where $\omega_x , \omega_z$ now take values in arbitrary $K$-vector spaces. It follows that $\gamma (A)$ gives an isomorphism of $K$-algebras (!) $A_x \silo A_z$, and therefore a homeomorphism of pro-discrete spaces
\[
G_K (K) = P_x \silo P_z \; .
\]
In particular, for algebraically closed $K$, the continuous map $\pi : P \to X$ is always surjective. For any field $K$, applying the functor $\uHom_{\alg} (\_ ,\uK)$ to \eqref{eq:16n} we obtain a continuous map
\begin{equation}
\label{eq:22n}
P \times G_K (K) = P \times_X \underline{G_K (K)} \longrightarrow P
\end{equation}
which by construction is a continuous $G_K (K)$-operation on $P$. Similarly the isomorphism \eqref{eq:18n} gives a homeomorphism
\begin{equation}
\label{eq:23n}
P \times G_K (K) \silo P \times_X P \; .
\end{equation}
Thus $P$ is a pseudo-torsor for $G_K (K)$. It is surjective if $K = \oK$. There are (fibrewise $K$-linear) homeomorphisms
\[
\uHom_{\alg} (\underline{\Gamma (\A_{E_x}}) , \uK) = X \times \Hom_{\alg} (\Gamma (\A_{E_x}) , K) = X \times \A_{E_x} (K) = X \times E_x \, .
\]
In $\uIRep_K (\pi_K (X,x))$ we have
\[
\Gamma (\A_{E_x}) = \Sym (\check{E}_x) \; .
\]
Under the $\otimes$-equivalence with $\uIFl_K (X)$ we find
\[
A_E = \Sym \, \check{E} \; ,
\]
and therefore
\[
\uHom_{\alg} (A_E , \uK) = \uHom (\check{E} , \uK) = E \; .
\]
Applying $\uHom_{\alg} (\_ , \uK)$ to \eqref{eq:19n} we get an isomorphism of vector bundles
\begin{equation}
\label{eq:24n}
\pi^* E = P \times_X E \silo P \times E_x \; .
\end{equation}
Thus the flat vector bundle $E$ is trivialized by pullback to $P$. One checks that the natural right $G_K (K)$-action on $\pi^* E$ over the one on $P$ corresponds to the diagonal right action on $P \times E_x$ given by
\[
(p , v) \sigma = (p \sigma , \rho (\sigma^{-1}) v) \quad \text{for} \; p \in P , v \in E_x , \sigma \in G_K (K) \; .
\]
Here $\rho : G_K (K) \to \GL (E_x)$ is induced from the given morphism $G_K \to \GL_{E_x}$. 

For $G_K = \pi_K (X,x)$ consider the continuous map
\begin{equation}
\label{eq:25n}
\pi : P_X := P (\pi_K (X,x)) \longrightarrow X \; .
\end{equation} 
Then $P_X$ is a pseudo-torsor for the pro-discrete group $\pi_K (X,x) (K)$. If $K = \oK$ then $\pi$ is surjective. Pullback via $\pi$ trivializes every flat bundle in $\uFl_K (X)$. In this regard $P_X$ may be viewed as a replacement for the universal covering. See Theorem \ref{t41} for a description of $P_X$ in the classical case. For $G_K = G_E$, we set 
\begin{equation}
\label{eq:26n}
P_E = P (G_E) \xrightarrow{\pi} X \; .
\end{equation}
The pullback $\pi^* E$ is trivial by \eqref{eq:24n}. Thus we have proved the following result.

\begin{theorem}
\label{t24n}
Let $(X,x)$ be a pointed connected topological space and $K$ a field.\\
1) Given a faithfully flat quotient $\pi_K (X,x) \xrightarrow{\alpha} G_K$ over $K$, consider the space $P = P (G_K) \xrightarrow{\pi} X$ defined in \eqref{eq:20n}. Pullback along $\pi$ trivializes every bundle $E$ in $\uFl_K (X)$ canonically, whose monodromy representation $\pi_K (X,x) \to G_E \subset \GL_{E_x}$ factors over $\alpha$, c.f. \eqref{eq:24n}. In particular pullback to $P_X = P (\pi_K (X,x))$ trivializes every bundle in $\uFl_K (X)$ and pullback to $P_E = P (G_E)$ trivializes $E$.\\
2) The pro-discrete group $G_K (K)$ acts continuously and simply transitively on the non-empty fibres of $\pi : P \to X$ and $P$ is a pseudo-torsor. If $X$ is path connected or if $K$ is algebraically closed, then $\pi : P \to X$ is surjective. \\
\end{theorem}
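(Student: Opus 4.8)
The plan is to read off both assertions from the algebraic identities \eqref{eq:16n}--\eqref{eq:19n} by applying the fibrewise functor $\uHom_{\alg}(-,\uK)$, and then to isolate the one genuine difficulty, namely the non-emptiness of the fibres $P_z$ for $z \neq x$. Everything before that point is a transport of already-established tensor-categorical data, so the proof is largely an assembly of the preceding construction.

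For part 1, I would first note that if the monodromy representation of $E$ factors through $\alpha$, then $E$ lies in the essential image of $\alpha^*$ in \eqref{eq:7n}, so the commutative algebra object $A_E = \Sym\,\check{E}$ is constructed inside $\uIRep_K(G_K)$ and the coaction \eqref{eq:17n} together with the isomorphism \eqref{eq:19n} is available. Applying $\uHom_{\alg}(-,\uK)$ to \eqref{eq:19n}, and using the identification $\uHom_{\alg}(A_E,\uK) = \uHom(\check{E},\uK) = E$, then yields the canonical isomorphism \eqref{eq:24n}, that is $\pi^* E \silo P \times E_x$, which trivializes $E$. The two highlighted special cases are simply the instances $G_K = \pi_K(X,x)$ (trivializing every bundle after pullback to $P_X$) and $G_K = G_E$ (trivializing $E$ after pullback to $P_E$).

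For part 2, the continuous action and the pseudo-torsor property are again formal consequences. Applying $\uHom_{\alg}(-,\uK)$ to the coaction \eqref{eq:16n} produces the continuous map \eqref{eq:22n}, which one checks is a right $G_K(K)$-action, since the counit and coassociativity of $\Delta$ dualize into the identity and associativity axioms; applying the same functor to the isomorphism \eqref{eq:18n} produces the homeomorphism \eqref{eq:23n}, namely $P \times G_K(K) \silo P \times_X P$. Restricting \eqref{eq:23n} to the fibre over a point $z$ gives a homeomorphism $P_z \times G_K(K) \silo P_z \times P_z$, which is exactly the assertion that $G_K(K)$ acts simply transitively on $P_z$ once $P_z \neq \emptyset$; and the base fibre is non-empty by \eqref{eq:21n}, since $P_x = G_K(K)$ contains the identity.

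The main obstacle --- the only step that is not formal --- is the surjectivity of $\pi$, i.e. showing that $P_z = \Hom_{\alg}(A_z, K)$ is non-empty for every $z$, which can genuinely fail in general because the stalk $K$-algebra $A_z$ need not admit any homomorphism to $K$. The plan is to produce, under either hypothesis, a $\otimes$-isomorphism of fibre functors $\omega_x \silo \omega_z$ on $\uFl_K(X)$, extend it to $\uIFl_K(X)$, and observe that it carries the algebra $A_x = \Gamma(G_K)$ isomorphically onto $A_z$, forcing $P_z \cong P_x = G_K(K) \neq \emptyset$. If $X$ is path connected I would build such an isomorphism by parallel transport of local systems along a path from $x$ to $z$; if $K$ is algebraically closed I would instead invoke Deligne's theorem that any two $K$-valued fibre functors on a Tannakian category are $\otimes$-isomorphic. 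Verifying that the resulting abstract $\otimes$-isomorphism is genuinely multiplicative on the algebra object $A$, so that it sends fibrewise $K$-algebra homomorphisms to $K$-algebra homomorphisms, is the step that will require the most care.
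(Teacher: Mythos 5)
Your proposal is correct and is essentially the paper's own argument: Theorem \ref{t24n} is stated there as a summary of the preceding construction, with \eqref{eq:24n} obtained by applying $\uHom_{\alg}(-,\uK)$ to \eqref{eq:19n}, the continuous action and pseudo-torsor structure obtained from \eqref{eq:16n} and \eqref{eq:18n}, and surjectivity of $\pi$ proved exactly as you plan, via parallel transport (the map \eqref{eq:7}) in the path-connected case and via Deligne's theorem on fibre functors when $K$ is algebraically closed. The multiplicativity issue you flag at the end is precisely the point the paper marks with ``(!)''; it follows from the naturality and $\otimes$-compatibility of the fibre-functor isomorphism applied to the multiplication morphism of the algebra object $A$, so your outline closes without any gap.
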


We will now show that for any algebraic quotient $G_K$ of $\pi_K (X,x)$ the $G_K (K)$-pseudo-torsor $P = P (G_K)$ is a torsor. To do this we first recall that finitely generated $K$-algebras can be written as reflexive coequalizers of finitely generated polynomial rings over $K$. Namely for such a $K$-algebra $\Gamma$ choose a presentation
\[
0 \longrightarrow \Ah \longrightarrow K [x_1 , \ldots , x_n] \longrightarrow \Gamma \longrightarrow 0 \; .
\]
Then $\Ah$ is generated by finitely many polynomials $\Ah = \langle P_1 , \ldots , P_m \rangle$. Consider the diagram
\begin{equation}
\label{eq:27n}
\xymatrix{
K [x_1 , \ldots , x_n , t_1 , \ldots , t_m] &
\ar[l]_-{\overset{\xrightarrow{\; f \; }}{s}}^{\overset{\dis \longrightarrow}{g}} K [x_1 , \ldots , x_n] \longrightarrow \Gamma
}
\end{equation}
where the $K$-algebra homomorphisms $f,g$ and $s$ are defined as follows: $f (x_i) = g (x_i) = x_i$ and $s (x_i) = x_i$ for $1 \le i \le n$. Moreover $f (t_j) = P_j$ and $g (t_j) = 0$ for $1 \le j \le m$. Then we have $f \verk s = \id = g \verk s$ and a short calculation shows that the ideal $\imm (f-g) = \Ah$. We now use an invariant version of this construction to write $\Gamma (G_K)$ as a reflexive coequalizer in the category of commutative $K$-algebra objects in $\uIRep_K (\pi_K (X,x))$. Since $G_K$ is algebraic, the $K$-algebra $\Gamma (G_K)$ is finitely generated. Let $S$ be a finite set of generators. By \cite{W} 3.3 Theorem, there is a finite dimensional $\pi_K (X,x)$-subrepresentation $\cV_x \subset \Gamma (G_K)$ with $S \subset \cV_x$. Let $\cV$ be the corresponding bundle in $\uFl_K (X)$ and let $V$ be its dual. Define $\Ah$ by the exact sequence in $\uIRep_K (\pi_K (X,x))$
\[
0 \longrightarrow \Ah \longrightarrow \Sym \cV_x \longrightarrow \Gamma (G_K) \longrightarrow 0 \; .
\]
Again using \cite{W} 3.3 Theorem, we can choose a finite dimensional $\pi_K (X,x)$-subrepresentation $H \subset \Ah$ containing a finite set of generators of the finitely generated ideal $\Ah$. Then $\cW_x = \cV_x \oplus H$ is a finite dimensional representation of $\pi_K (X,x)$. Let $\cW$ be the corresponding bundle in $\uFl_K (X)$ and $W$ its dual. Consider the following diagram
\begin{equation}
\label{eq:28n}
\xymatrix{
\Sym \cW_x & \ar[l]_-{\overset{\xrightarrow{f_x}}{s_x}}^-{\overset{\dis \longrightarrow}{g_x}} \Sym \cV_x \longrightarrow \Gamma (G_K) \; .
}
\end{equation}
Here the $\pi_K (X,x)$-equivariant algebra homomorphisms $f_x , g_x , s_x$ are defined as follows:
\begin{align*}
f_x \, |_{\cV_x} & = g_x \, |_{\cV_x} = \id \quad \text{and} \quad s_x \, |_{\cV_x} = \, \text{inclusion}\, \cV_x \hookrightarrow \cV_x \oplus H = \cW_x\\
f_x \, |_H & = \text{inclusion} \, H \hookrightarrow \Sym \cV_x \; \text{and} \; g_x |_H = 0 \; .
\end{align*}
Since this is just an invariant way of writing \eqref{eq:27n} the diagram \eqref{eq:28n} exhibits $\Gamma (G_K)$ as a reflexive coequalizer in the category of commutative $K$-algebras. Since everything is $\pi_K (X,x)$-equivariant, \eqref{eq:28n} also describes $\Gamma (G_K)$ as a reflexive coequalizer in the category of commutative $K$-algebra objects of $\uIRep_K (\pi_K (X,x))$. 

Recall that $A$ was the commutative $K$-algebra object in $\uIFl_K (X)$ corresponding to $G_K$. Under the equivalence of Tannakian categories $\uIRep_K (\pi_K (X,x)) = \uIFl_K (X)$ we obtain a diagram in $\uIFl_K (X)$ whose fibre in $x$ is \eqref{eq:28n}
\begin{equation}
\label{eq:29n}
\xymatrix{\Sym \cW & \ar[l]^-{\overset{\dis \longrightarrow}{g}}_-{\overset{\xrightarrow{\; f \;}}{s}} \Sym \cV \longrightarrow A \; .
}
\end{equation}
The diagram \eqref{eq:29n} describes $A$ as a reflexive coequalizer in the category of commutative $K$-algebra objects in $\uIFl_K (X)$. Applying the functor $\uHom_{\alg} (\_ , \uK)$, recalling definition \eqref{eq:20n} and noting that
\[
\uHom_{\alg} (\Sym \cV , \uK) = \uHom (\cV , K) = V \; ,
\]
we get a diagram of spaces over $X$ making $P$ a reflexive equalizer
\begin{equation}
\label{eq:30n}
\xymatrix{
P \longrightarrow V & \ar[l]_-{\overset{\xrightarrow{\; F \;}}{S}}^-{\overset{\dis \longrightarrow}{G}}  W \quad \text{where} \; S \verk F = S \verk G = \id \; .
}
\end{equation}
Thus we get homeomorphisms of spaces over $X$
\[
P = \{ v \in V \mid F (v) = G (v) \} \overset{\diag}{\silo} \{ (v_1 , v_2) \in V \times V \mid F (v_1) = G (v_2) \} \; .
\]
For each point $z \in X$ the fibre $f_z$ of the map $f$ in \eqref{eq:29n} is a homomorphism of $K$-algebras
\[
f_z : \Sym \cW_z \longrightarrow \Sym \cV_z \; .
\]
Applying the functor $\spec$ we get a morphism of affine $K$-spaces over the fibres $V_z$ and $W_z$ of the bundles $V$ and $W$
\[
\spec f_z : \A_{V_z} \longrightarrow \A_{W_z} \; .
\]
The fibre $F_z$ of the map $F$ in \eqref{eq:30n} is obtained by passing to the $K$-valued points of $\spec f_z$
\[
F = (\spec f_z) (K) : \A_{V_z} (K) = V_z \longrightarrow W_z = \A_{W_z} (K) \; .
\]
For any choice of isomorphisms $V_z \cong K^n$ and $W_z \cong K^m$ the $m$ components of the map $F$ are given by polynomial functions in $n$ variables. Their maximal degree $d$ is the maximum of $1$ and the maximal degree of an element of $H_x$ in $\Sym \cV_x$. In particular $d$ is independent of $z \in X$. Choose an open neighborhood $z \in U \subset X$ such that there are trivializations $V \, |_U \cong \uK^n$ and $W \, |_U \cong \uK^m$. The induced continuous map
\[
F_U : U \times K^n \cong V \, |_U \xrightarrow{F \, |_U} W \, |_U \cong U \times K^m
\]
has the form
\[
F_U (y , \xi) = (y , \varphi_U (y , \xi)) \quad \text{for} \; y \in U \; , \; \xi \in K^n
\]
where
\[
\varphi_U (y , \xi)_j = \sum_{|\nu| \le d} a_{\nu , j} (y) \xi^{\nu} \quad \text{for} \; 1 \le j \le m \; .
\]
The coefficient functions $a_{\nu , j} : U \to K$ are continuous and hence locally constant. Since there are only finitely many of them, we may assume that they are constant by shrinking $U$. Thus we have
\[
F_U (y , \xi) = (y , \varphi (\xi)) \quad \text{for} \; y \in U \, , \, \xi \in K^n
\]
where
\[
\varphi (\xi)_j = \sum_{|\nu| \le d} a_{\nu , j} \xi^{\nu} \quad \text{with} \; a_{\nu , j} \in K \; \text{for} \; 1 \le j \le m \; .
\]
The map $G$ is a linear map of flat vector bundles and by shrinking $U$ we may assume that the corresponding map $G_U$ has the form
\[
G_U (y , \xi) = (y , \psi (\xi)) \quad \text{for} \; y \in U \; , \; \xi \in K^n \; , 
\]
where $\psi : K^n \to K^m$ is a linear map.

Set 
\[
Z_U = \{ \xi \in K^n \mid \varphi (\xi) = \psi (\xi) \} \; .
\]
They are the $K$-points of an algebraic variety in $\A^n_K$ and by the equalizer description of $P$ we have a homeomorphism
\[
P \, |_U \cong U \times Z_U \quad \text{over} \; U \; .
\]
It follows in particular that the function
\[
X \longrightarrow \{ 0 , 1 , \ldots , \infty \} \; , \; z \longmapsto |P_z|
\]
is locally constant and hence constant on $X$. Since $P_x = G_K (K)$ by \eqref{eq:21n} we conclude that $|P_z| \ge 1$ for all $z \in X$ i.e. that the projection $\pi : P \to X$ is surjective and that $Z_U \neq \emptyset$. Hence $P \, |_U$ has a section. Thus $P$ is a surjective $G_K (K)$-pseudo-torsor with local sections i.e. a $G_K (K)$-torsor.

\begin{theorem}
\label{t25n}
Let $(X,x)$ be a pointed connected topological space and $K$ a field.\\
1) For every algebraic quotient $\pi_K (X,x) \to G_K$ the corresponding $G_K (K)$-pseudo-torsor $P = P (G_K)$ over $X$ defined in \eqref{eq:20n} is a torsor. The algebraic group $G_K$ is reduced and we have $G_K = \overline{G_K (K)}$ in the Zariski topology.\\
2) For every flat vector bundle $E$ in $\uFl_K (X)$ the monodromy group $G_E$ is the Zariski closure with the reduced scheme structure in $\GL_{E_x}$ of the discrete group $G_E (K)$. The structure group of $E$ can be reduced to $G_E (K)$.\\
3) If the monodromy group scheme $G_E$ is finite, then $G_E$ is the constant group scheme over $K$ attached to the finite group $G_E (K)$.
\end{theorem}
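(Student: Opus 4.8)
The torsor claim in part~1 is in fact already settled by the reflexive‑equalizer computation preceding the statement: it presents $P=P(G_K)$ locally over a small connected $U$ as $U\times Z_U$ with $Z_U\neq\emptyset$, producing local sections, so $P$ is a $G_K(K)$‑torsor. The remaining assertions are reducedness, Zariski density, and the finite case, and my plan is to deduce all of them from Proposition~\ref{t22}a) applied to the torsor $P(G_K)$ itself, after reducing the intrinsic statements about an arbitrary algebraic quotient to the case of a monodromy group. First I would reduce part~1 to part~2. Since $G_K$ is algebraic it is affine of finite type, hence carries a faithful finite‑dimensional representation $W$; equivalently the Tannakian subcategory of $\uFl_K(X)$ cut out by $\alpha$ is finitely $\otimes$‑generated. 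Transporting $W$ through $\uRep_K(G_K)\hookrightarrow\uFl_K(X)$ yields a bundle $E$ with $E_x=W$ and $\langle E\rangle^{\otimes}=\uRep_K(G_K)$, so that $G_E=G_K$ as closed subgroup schemes of $\GL_{E_x}$ and $G_K(K)=G_E(K)$. Because the reduced structure of $G_K$ and the Zariski closure of its $K$‑points may be computed inside the closed ambient $\GL_{E_x}$, the intrinsic claims ``$G_K$ reduced'' and ``$G_K=\overline{G_K(K)}$'' follow once they are known for monodromy groups.

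For part~2, fix $E$, put $r=\dim_K E_x$ and identify $E_x\cong K^r$. The group $G_E$ is algebraic, so $P_E=P(G_E)$ is a $G_E(K)$‑torsor by part~1, and $G_E(K)$ is \emph{discrete}: as $\Gamma(G_E)$ is finitely generated, evaluation at a finite generating set embeds $G_E(K)=\Hom_{\alg}(\Gamma(G_E),K)$ into the discrete space $K^n$, so the pro‑discrete topology is discrete. Now $G_E(K)\subseteq\GL_r(K)$, and combining \eqref{eq:24n} with the identity $\pi^G_*(P\times V)=P\times^G V$ from \eqref{eq:4} gives $E\cong P_E\times^{G_E(K)}K^r$; this is exactly the asserted reduction of the structure group of $E$ to $G_E(K)$.

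With this reduction in hand I would invoke Proposition~\ref{t22}a) for the discrete group $G=G_E(K)$ and the torsor $P_E$: it yields that $G_E=G_{E,x}$ is a closed subgroup scheme of $\oG$, the reduced Zariski closure of $G_E(K)$ in $\GL_r$. Conversely $G_E(K)\subseteq G_E$ shows that the underlying closed set of $\oG$ lies in $|G_E|$, whence $\oG\subseteq(G_E)_{\red}\subseteq G_E$ as closed subschemes. Chaining the two opposite closed immersions $G_E\hookrightarrow\oG\hookrightarrow G_E$ forces $G_E=\oG=(G_E)_{\red}$, so $G_E$ is reduced and coincides with the reduced Zariski closure of its $K$‑points. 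Under the identification $G_K=G_E$ this is precisely the equality $G_K=\overline{G_K(K)}$ of part~1.

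For part~3 assume $G_E$ is finite, i.e.\ $\Gamma(G_E)$ is finite‑dimensional over $K$; then $G_E(K)$ is a finite set of $K$‑rational (hence closed) points. By part~2, $G_E$ is the reduced Zariski closure of this finite set, and the reduced closed subscheme on a finite set of $K$‑rational points is the disjoint union $\coprod_{g\in G_E(K)}\Spec K$, which is the constant group scheme attached to the abstract finite group $G_E(K)$; hence $G_E$ is constant. I do not expect a genuine obstacle here, since the hard analytic input — that $P(G_K)$ is a torsor — is already done; the only delicate points are bookkeeping ones, namely that an algebraic quotient really is realized as a monodromy group (so part~1 collapses to part~2), and that Proposition~\ref{t22}a) supplies only the inclusion $G_{E,x}\subseteq\oG$ because $P_E$ need not be connected. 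The step I would watch most carefully is keeping the reduced scheme structure straight across the two opposing closed immersions, as that is exactly what converts the pair of inclusions into the equality that yields reducedness.
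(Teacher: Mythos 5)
Your proposal is correct and follows essentially the same route as the paper: part 1 is reduced to the monodromy case via a faithful representation of the algebraic group $G_K$ (the paper's remark preceding the proof), the torsor property is taken from the reflexive-equalizer computation, the reduction of structure group to $G_E(K)$ comes from \eqref{eq:24n} together with \eqref{eq:4}, and the equality $G_E=\overline{G_E(K)}$ follows from the inclusion $G_E\subset\oG$ of Proposition \ref{t22}a) combined with the reverse inclusion given by closedness of $G_E$ in $\GL_{E_x}$. Your explicit verifications (discreteness of $G_E(K)$ for algebraic $G_E$, and the spelled-out deduction of part 3 from part 2) are details the paper leaves implicit, but they do not change the argument.
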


\begin{rem}
The algebraic quotients $G_K$ of $\pi_K (X,x)$ are the quotients $\pi_K (X,x) \to G_E$ coming from the representations $\pi_K (X,x) \to \GL_{E_x}$ attached to flat vector bundles $E$ in $\uFl_K (X)$. This holds because every algebraic group scheme over $K$ admits a closed embedding into $\GL_H$ for a finite dimensional $K$-vector space $H$.
\end{rem}

\begin{proof}
1), 2) By the remark we may assume that $G_K = G_E$ for some $E$ in $\uFl_K (X)$. We have seen above that $P_E = P (G_E)$ is a $G_E (K)$-torsor. Using the $G_E (K)$-equivariant isomorphism \eqref{eq:24n} for $P = P (G_E)$ and the considerations after \eqref{eq:4} we obtain an isomorphism in $\uFl_K (X)$
\[
E = \pi^{G_E (K)}_* \pi^* E = P_E \times^{G_E (K)} E_x \; .
\]
Thus $E$ has a reduction of structure group to $G_E (K)$. Proposition \ref{t22} a) now implies that $G_E \subset \overline{G_E (K)}$ in $\GL_{E_x}$. Since $G_E$ is a closed subgroup scheme of $\GL_{E_x}$, we also have $\overline{G_E (K)} \subset G_E$ and therefore $G_E = \overline{G_E (K)}$. Such a group scheme is reduced. \\
Assertion 3) follows from 2).
\end{proof}

\begin{rem}
It follows from 2) that $\pi_K (X,x)$ is a projective limit of Zariski closures of discrete subgroups of $\GL_r (K)$ for varying $r$. In particular $\pi_K (X,x)$ is reduced. For locally connected spaces $X$ this was previously shown in Proposition \ref{t22} c) with a simpler proof.
\end{rem}

In the next section we will study the finite quotients of $\pi_K (X,x)$. For them the torsor $P$ is connected.

\begin{theorem}
\label{t26n}
Let $(X,x)$ be a pointed connected topological space and $K$ a field. Let $G_K$ be a finite group scheme quotient of $\pi_K (X,x)$. Then $P = P (G_K)$ is a connected torsor for the finite group $G_K (K)$ and $G_K = G_K (K) /_K$. 
\end{theorem}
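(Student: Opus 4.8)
The plan is to extract everything except connectedness directly from Theorem~\ref{t25n}, and then to establish connectedness of $P$ by a Tannakian computation of the ring of locally constant functions on $P$. First I would dispose of the formal parts. Since $G_K$ is a finite group scheme quotient of $\pi_K (X,x)$, the Remark preceding Theorem~\ref{t25n} lets me write $G_K = G_E$ for some flat bundle $E$, and Theorem~\ref{t25n}~3) then gives $G_K = G_K (K)/_K$, the constant group scheme attached to the finite group $G := G_K (K)$ (finite because $\Gamma (G_K)$ is finite-dimensional over $K$). By Theorem~\ref{t25n}~1), $P = P(G_K)$ is a $G$-torsor. As $G$ is finite and discrete and $P$ is locally trivial, the projection $\pi : P \to X$ is a finite covering, locally isomorphic to $U \times G$ over $X$. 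Thus only the connectedness of $P$ remains.

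Next I would reformulate connectedness. The space $P$ is connected if and only if the $K$-algebra $\Gamma (P, \uK)$ of locally constant $K$-valued functions on $P$ reduces to the constants $K$: any nontrivial clopen decomposition of $P$ yields a nonconstant idempotent in $\Gamma (P,\uK)$, and conversely. Since $\Gamma (P, \uK) = \Gamma (X, \pi_* \uK)$, it suffices to compute the global sections of the local system $\pi_* \uK$ on $X$.

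The key step is to identify $\pi_* \uK$ with the algebra bundle $A = A(G_K)$ from \eqref{eq:20n}. Because $\pi$ is a finite covering, $\pi_* \uK$ is a local system of rank $|G|$ whose fibre at $x$ is $\Map (P_x , K) = \Map (G, K) = \Gamma (G_K)$, and whose monodromy is the permutation representation of $\pi_K (X,x)$ on the fibre $P_x = G$. Since $\pi_K (X,x)$ acts on $P_x$ through $\alpha$ and the translation action of $G_K$ on itself, this monodromy is precisely the (left-)regular representation of $G_K$ pulled back along $\alpha$, which by construction is the representation corresponding to $A$. Hence $\pi_* \uK \cong A$ in $\uFl_K (X)$. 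Now I would compute, using the tensor equivalence $\uFl_K (X) \simeq \uRep_K (\pi_K (X,x))$ and the faithful flatness of $\alpha$,
\[
\Gamma (X, A) = \Hom_{\uFl_K (X)} (\uK, A) = \Gamma (G_K)^{\pi_K (X,x)} = \Gamma (G_K)^{G_K} = K \; ,
\]
the last equality because the translation-invariant elements of $\Gamma (G_K) = \Map (G, K)$ are exactly the constant functions. Therefore $\Gamma (P, \uK) = K$ and $P$ is connected.

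I expect the main obstacle to be the honest verification of $\pi_* \uK \cong A$, and more generally the care forced by the fact that $X$ need not be locally connected: one cannot argue via connected components of the covering, since these can fail to surject (as the solenoid in the Remark shows). Hence the passage from the transitivity of the $G$-action on the fibre to the connectedness of the total space must be routed through the invariants computation $\Gamma (G_K)^{G_K} = K$, which is exactly where the torsor property does its work.
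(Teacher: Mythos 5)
Your strategy is the same as the paper's: import the torsor property and the constancy $G_K = G_K(K)/_K$ from Theorem \ref{t25n}, and then prove connectedness by computing $H^0(P,\uK) = H^0(X,\pi_*\uK) = \Hom_{\uFl_K(X)}(\uK,A) = \Gamma(G_K)^{G_K} = K$. All of your formal reductions are correct, and you correctly isolate $\pi_*\uK \cong A$ as the crux. But your justification of that crux is a genuine gap. You argue that $\pi_*\uK$ has ``monodromy ... the permutation representation of $\pi_K(X,x)$ on the fibre $P_x = G$.'' For a general connected $X$ there is no such thing: $\pi_K(X,x)$ is an affine group scheme defined purely Tannakian-ly, there is no path-lifting or deck-transformation mechanism making it act on the fibre set of a covering, and the representation attached to $\pi_*\uK$ under $\uFl_K(X)\simeq \uRep_K(\pi_K(X,x))$ is given abstractly by the equivalence. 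Since $A$ is \emph{by definition} the bundle corresponding to the regular representation of $G_K$ pulled back along $\alpha$, your sentence ``this monodromy is precisely the regular representation ... which by construction is the representation corresponding to $A$'' is a restatement of the claim $\pi_*\uK\cong A$, not an argument for it. Ironically, this is exactly the non-locally-connected pitfall you warn about in your closing paragraph; your own monodromy step falls under that warning.

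The missing idea, which is how the paper closes this gap, is equivariant descent through the equivalence \eqref{eq:4} (valid for any $G$-torsor, connected or not). Since the representation defining $A$ factors through $G_K$, one may take $E = A$ in \eqref{eq:24n} and obtain a right $G$-equivariant isomorphism $\pi^*A \cong P \times A_x = P \times K^G$. On the other hand there is a $G$-equivariant isomorphism $P \times K^G \cong \coprod_{p\in P} K^{\pi^{-1}\pi(p)} = \pi^*\pi_*\uK$, sending $(p,f)$ to the function $p^{\sigma}\mapsto f(\sigma)$. Applying the quasi-inverse $\pi^G_*$ of $\pi^*$ then gives $A \cong \pi_*\uK$ in $\uFl_K(X)$. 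With this lemma supplied, the remainder of your computation (the invariants calculation $\Gamma(G_K)^{G_K}=K$, using that $\uRep_K(G_K)$ is a full subcategory of $\uRep_K(\pi_K(X,x))$ because $\alpha$ is faithfully flat) goes through verbatim and coincides with the paper's proof.
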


\begin{proof}
We know from Theorem \ref{t25n}, that $P$ is a torsor. The following argument also gives another proof for this fact. Note that $A = A (G_K)$ defined after \eqref{eq:15n} is an algebra object in $\uFl_K (X)$ since $\Gamma (G_K)$ is a finite-dimensional $K$-vector space because $G_K$ is finite over $K$. Hence every point of $X$ has an open neighborhood such that in local coordinates for the flat bundle $A$ the multiplication map and the unit section are constant. Note here that for $r \ge 0$ the topology on $K^r$ is discrete. Consider the $G_K (K)$-pseudo-torsor
\[
\pi : P = P (G_K) = \uHom_{\alg} (A , \uK) \longrightarrow X \; .
\]
It follows that the function $z \mapsto |P_z|$ is locally constant and hence constant since $X$ is connected. We have
\[
P_x = \Hom_{\alg} (\Gamma (G_K ) , K) = G_K (K)
\]
and therefore $|P_z| = |G_K (K)| \ge 1$. Thus the map $\pi : P \to X$ is surjective. The above argument about local constancy also shows that $P$ is locally trivial. Thus $P$ is a $G_K (K)$-torsor. As in the proof of Theorem \ref{t25n} it follows that $G_K = \overline{G_K (K)}$. Since $G_K (K)$ is finite this implies that $G_K$ is the constant group scheme attached to the finite group $G_K (K)$. Writing $G = G_K (K)$ we therefore have $\Gamma (G_K) = K^G$. We claim that $A \cong \pi_* \uK$ for the projection $\pi : P \to X$. By \eqref{eq:4} ff it suffices to show that $\pi^* A$ and $\pi^* \pi_* \uK$ are isomorphic as $G$-bundles on the $G$-space $P$. The representation of $\pi_K (X,x)$ on $\Gamma (G_K)$ defining the bundle $A$ factors over the quotient $G_K$. Hence we may take $E = A$ in \eqref{eq:24n} and obtain a right $G$-equivariant isomorphism
\[
\varphi : \pi^* A \silo P \times A_x = P \times \Gamma (G_K) = P \times K^G \;.
\]
Here $\tau \in G$ acts on $P \times K^G$ via $(p,f)^{\tau} = (p^{\tau} , \tau^{-1} f)$ where $(\tau f) (\sigma) = f (\tau^{-1} \sigma)$. On the other hand we have a $G$-equivariant isomorphism 
\[
P \times K^G \silo \coprod_{p \in P} K^{p^G} = \coprod_{p \in P} K^{\pi^{-1} \pi (p)} = \pi^* \pi_* \uK \; .
\]
Here the element $(p,f)$ is sent to $(p^{\sigma} \mapsto f (\sigma))$. Thus $\pi^* A$ is isomorphic to $\pi^* \pi_* \uK$ as a $G$-bundle and hence we have $A = \pi_* \uK$ in $\uFl_K (X)$. We can now show that $P$ is connected. Namely
\begin{align*}
H^0 (P , \uK) & = H^0 (X , \pi_* \uK) = H^0 (X,A) = \Hom_{\Fl_K (X)} (\uK , A) \\
& \cong \Hom_{\uRep_K (\pi_K (X,x))} (K , K^G) \\
& = \Hom_{\uRep_K (G)} (K , K^G) = K \; .
\end{align*}
Here we have used that $G_K = G /_K$ is a quotient of $\pi_K (X,x)$ so that $\uRep_K (G)$ is a full subcategory of $\uRep_K (\pi_K (X,x))$. 
\end{proof}

For any two points $x_0 , x_1 \in X$ we define
\[ 
\pi_K (X , x_0 , x_1) = \uIso^{\otimes} (\omega_{x_0} , \omega_{x_1}) \; .
\]
By a general result on Tannakian categories \cite{DM}, Theorem 3.2, this is both a left $\pi_K (X , x_0)$- and a right $\pi_K (X, x_1)$-torsor for the $f\!pqc$-topology. 

We define the fundamental pro-algebraic groupoid $\Pi_K (X)$ of a topological space $X$ to be the following category enriched over the category of $K$-schemes. The objects of $\Pi_K (X)$ are the points of $X$. The morphism schemes are $\Mor (x_1 , x_2) = \emptyset$ if $x_1$ and $x_2$ lie in different connected components and 
\[
\Mor (x_1 , x_2) = \pi_K (Y , x_1 , x_2)
\]
if $x_1$ and $x_2$ lie in the same connected component $Y$ of $X$.

For a continuous map $f : X \to Y$ of topological spaces the pullback functor $f^* : \uFl_K (Y) \to \uFl_K (X)$ is a tensor functor and for any point $x \in X$, the diagram
\[
\xymatrix{
\uFl_K (Y) \ar[rr]^{f^*} \ar[dr]_{\omega_{f (x)}} & & \uFl_K (X) \ar[dl]^{\omega_x} \\
 & \uVec_K
}
\]
commutes. Hence we get an induced homomorphism of affine group schemes over $K$,
\[
f_* : \pi_K (X,x) \longrightarrow \pi_K (Y , f(y)) \; .
\]
More generally, for any two points $x_1 , x_2$ in $X$ we obtain a morphism (of bi-torsors)
\[
f_* : \pi_K (X , x_0 , x_1) \longrightarrow \pi_K (Y , f (x_0) , f (x_1)) \; .
\]
Let $\pi_1 (X , x)$ resp. $\pi_1 (X , x_1 , x_2)$ be the usual topological fundamental group resp. the (bi-torsor) of homotopy classes of continuous paths from $x_1$ to $x_2$. There is a natural homomorphism of groups
\begin{equation}
\label{eq:6}
\pi_1 (X,x) \longrightarrow \pi_K (X,x) (K)
\end{equation}
and more generally a morphism compatible with the bi-torsor structures
\begin{equation}
\label{eq:7}
\pi_1 (X , x_0 , x_1) \longrightarrow \pi_K (X , x_0 , x_1) (K)  \; .
\end{equation}
Namely, for a continuous path $\alpha : [0,1] \to X$ with $\alpha (0) = x_0$ and $\alpha (1) = x_1$ we obtain a $\otimes$-isomorphism $\omega_{\alpha} : \omega_{x_1} \silo \omega_{x_2}$ as follows: For $E$ in $\uFl_K (X)$ the locally constant sheaf $\alpha^{-1} \Eh$ is constant on $[0,1]$. Hence the evaluation maps $\ev_0$ and $\ev_1$ in the points $x_0$ and $x_1$ provide isomorphisms
\[
\omega_{x_0} (E) = E_{x_0} \xleftarrow{\overset{\ev_0}{\sim}} \Gamma ([0,1] , \alpha^* E) \xrightarrow{\overset{\ev_1}{\sim}} E_{x_1} = \omega_{x_1} (E) \; .
\]
The natural transformation $\omega_{\alpha}$ is defined by the family of isomorphisms
\[
\omega_{\alpha} (E) = \ev_1 \verk \ev^{-1}_0 : \omega_{x_0} (E) \longrightarrow \omega_{x_1} (E) \; .
\]
All local systems are constant on $[0,1] \times [0,1]$. Using this one sees that the isomorphisms $\omega_{\alpha} (E)$ and hence $\omega_{\alpha}$ depend only on the homotopy class of $\alpha$. 

It follows that any continuous path $\alpha$ from $x_0$ to $x_1$ defines an isomorphism of group schemes over $K$
\begin{equation}
\label{eq:8}
\alpha_K = R^{-1}_{\omega_{\alpha}} \verk L_{\omega_{\alpha}} : \pi_K (X , x_0) \silo \pi_K (X , x_1) \; .
\end{equation}
Here
\[
L_{\omega_{\alpha}} : \pi_K (X , x_0) \silo \pi_K (X , x_0 , x_1)
\]
and
\[
R_{\omega_{\alpha}} : \pi_K (X , x_1) \silo \pi_K (X , x_0 , x_1)
\]
are left- and right translation of $\omega_{\alpha} \in \pi_K (X , x_0 , x_1) (K)$.

If $X$ is not path-connected, I do not know if the group schemes $\pi_K (X , x_0)$ and $\pi_K (X ,x_1)$ are isomorphic over $K$ in general. For algebraically closed fields $K$ all fibre functors over $K$ of a Tannakian category over $K$ are isomorphic by a result of Deligne \cite{D}, see also \cite{Cou}, \cite{Wib}. Hence we have $\pi_K (X , x_0 , x_1) (K) \neq \emptyset$ if $K = \oK$ and for any element $\xi \in \pi_K (X , x_0 , x_1) (K)$ we get an isomorphism \eqref{eq:8} as above with $\omega_{\alpha}$ replaced by $\xi$.

We end this section with the following remark on flat vector bundles over a compact (= quasicompact + Hausdorff) space.

\begin{prop}
\label{t24}
Let $X$ be a compact connected topological space and let $E$ be in $\uFl_K (X)$. Then the following assertions hold:\\
a) There is a finite atlas of local trivializations of $E$ such that the finitely many transition functions
\[
g_{\nu \mu} : U_{\nu} \cap U_{\mu} \longrightarrow \GL_r (K) \; , \; r = \rank E
\]
take only finitely many different values.\\
b) There is a finitely generated field $K_0 \subset K$ and a flat vector bundle $E_0$ in $\uFl_{K_0} (X)$ such that $E \cong E \otimes_K K_0$ in $\uFl_K (X)$. 
\end{prop}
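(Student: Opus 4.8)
The plan is to prove part a) directly and then obtain part b) as a purely algebraic consequence of it. For a), I would start from local triviality: every point of $X$ has an open neighbourhood over which $E$ is trivial, and since $X$ is quasicompact I may extract a finite atlas $\{U_1, \dots, U_n\}$ of such trivializing opens, with transition functions $g_{ij} \colon U_i \cap U_j \to \GL_r(K)$ that are locally constant because $K$ carries the discrete topology. The naive hope that a finite atlas already has finitely many transition values is false: over a non-locally-connected space an overlap $U_i \cap U_j$ can have infinitely many connected components (as in the solenoid of the preceding remark), so $g_{ij}$ may genuinely take infinitely many values. Overcoming this is the main obstacle, and the observation that resolves it is that a locally constant function on a \emph{compact} space takes only finitely many values: cover it by opens on which the function is constant and pass to a finite subcover.

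To exploit this I would shrink the atlas. Since $X$ is compact Hausdorff it is normal, so the shrinking lemma provides an open cover $\{V_1, \dots, V_n\}$ of $X$ with $\overline{V_i} \subseteq U_i$ for each $i$. I retain the trivialization of $E$ over each $V_i$ obtained by restricting the one over $U_i$, so that the new transition functions are simply the restrictions $g_{ij}|_{V_i \cap V_j}$. Now $\overline{V_i} \cap \overline{V_j}$ is a closed, hence compact, subset of $X$ contained in $U_i \cap U_j$, so the restriction of the locally constant map $g_{ij}$ to it takes only finitely many values; a fortiori $g_{ij}|_{V_i \cap V_j}$ does. As there are only finitely many pairs $(i,j)$, the finitely many transition functions of the atlas $\{V_i\}$ take only finitely many values in total, which is exactly assertion a).

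For b) I would read off the field from these values. Let $S \subseteq \GL_r(K)$ be the finite set of all values assumed by all the $g_{ij}|_{V_i \cap V_j}$, and let $K_0 \subseteq K$ be the subfield generated over the prime field by the finitely many matrix entries of the elements of $S$ (note that $S$ is closed under inverses, since $g_{ji} = g_{ij}^{-1}$, and in any case the entries of $g^{-1}$ lie in the field generated by those of $g$). Then $K_0$ is a finitely generated field, and the cocycle relations $g_{ij}\,g_{jk} = g_{ik}$, being equalities between elements of $S$, already hold in $\GL_r(K_0)$. I can therefore glue the trivial bundles $V_i \times K_0^r$ along the locally constant cocycle $(g_{ij})$ with values in $\GL_r(K_0)$ to obtain a flat bundle $E_0$ in $\uFl_{K_0}(X)$. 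By construction $E_0$ and $E$ are defined by the same cocycle relative to the atlas $\{V_i\}$, one over $K_0$ and the other over $K$, so base change along $K_0 \hookrightarrow K$ (fibrewise tensoring $K_0^r$ with $K$) carries $E_0$ to a bundle with the same transition data as $E$; hence $E_0 \otimes_{K_0} K \cong E$ in $\uFl_K(X)$, as required. The only genuine work lies in part a); part b) is then a bookkeeping of coefficients.
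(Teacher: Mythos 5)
Your proof is correct and takes essentially the same route as the paper: both hinge on the observation that a locally constant function on a compact set takes only finitely many values, arranged by modifying the trivializing cover so that the closures of the new opens are compact subsets of the old trivializing opens, and part b) is the same bookkeeping with the field generated by the finitely many matrix entries. The only (immaterial) difference is the order of operations --- you extract a finite atlas first and then shrink it via normality of the compact Hausdorff space, whereas the paper first refines the cover so that closures land inside the original opens and then passes to a finite subcover.
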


\begin{proof}
a) Choose a trivializing cover of $E$ with open sets $\{ V_i \}$ and let $\tg_{ij} : V_i \cap V_j \to \GL_r (K)$ be the corresponding locally constant transition functions. Choose a refining cover $\eU = \{ U_{\alpha} \}$ such that under the refining map $\iota$ we have $\oU_{\alpha} \subset V_{\iota (\alpha)}$. Then the induced transition maps
\[
g_{\alpha \beta} = \tg_{\iota (\alpha) \iota (\beta)} \; |_{U_{\alpha} \cap U_{\beta}}
\]
each take only finitely many values since $\oU_{\alpha} \cap \oU_{\beta} \subset V_{\iota (\alpha)} \cap V_{\iota (\beta)}$ is compact and $g_{\iota (\alpha) \iota (\beta)}$ is locally constant. By compactness of $X$ we may pass to a finite subcover $\{ U_{\nu} \}$ of $\{ U_{\alpha} \}$ and a) follows.\\
b) The entries of the finitely many matrices in $\GL_r (K)$ that occur as values of the $g_{\nu\mu}$ in a) generate a finitely generated subfield $K_0$ of $K$. Viewing $(g_{\nu\mu})$ as a cocycle with values in $\GL_r (K_0)$ we get a flat bundle $E_0$ such that $E_0 \otimes_{K_0} K$ is isomorphic to $E$. 
\end{proof}

\begin{remark}
\label{t25}
The proof of a) shows that the finitely many transition functions $g_{\nu\mu}$ take values in $\GL_r (A_0)$ for a finitely generated $\Z$-algebra. Reducing modulo a maximal ideal $\emm_0$ of $A_0$ one obtains a cocycle with values in $\GL_r (k)$ where $k = A_0 / \emm_0$ is a finite field. The associated principal bundle is a finite covering of $X$. Its connected components are finite Galois coverings, c.f. \cite{KS}, Proposition 2.7. 
\end{remark}

\section{Behaviour of $\pi_K (X,x)$ under finite coverings and relation with $\pi^{\et}_1 (X,x)$} \label{sec:3}
Let $\pi : Y \to X$ be a finite covering of the topological space $X$. The map $X \to \Z , x \mapsto |\pi^{-1} (x)|$ is locally constant and hence constant on each connected component of $X$. If $X$ is connected its value is called the degree $\deg (\pi)$ of the covering. In \cite{KS} Proposition 2.7 it is shown that for a connected topological space $X$ the total space of every finite covering $\pi : Y \to X$ has only finitely many connected components $Y = Y_1  \amalg \ldots \amalg Y_r$. Moreover, the restrictions $\pi_i = \pi \, |_{Y_i}$ are finite coverings and we have $\deg \pi = \deg \pi_1 + \ldots + \deg \pi_r$. A finite Galois covering with group $G$ is a finite connected covering $\pi : Y \to X$ together with a (right-) $G$-action on $Y$ over $X$ such that $G$ permutes the fibres of $\pi$ simply transitively. Equivalently it is a connected $G$-torsor over $X$. Every bundle $F$ in $\uFl_K (Y)$ is a subbundle of $\pi^* E$ for some $E$ in $\uFl_K (X)$. Namely, for the flat bundle (!) $E = \pi_* F$ we have 
\[
\pi^* E = \pi^* \pi_* F = \bigoplus_{\sigma \in G} \sigma^* F \; .
\]
Thus $F$ is even a direct summand of $\pi^* E$. By \cite{DM}, Proposition 2.21 (b), for any chosen $y \in Y$ with $\pi (y) = x$ the morphism 
\[
i = \pi_* : \pi_K (Y, y) \longrightarrow \pi_K (X,x)
\]
is therefore a closed immersion.

Assume that $F$ in $\uFl^G_K (Y)$ is a trivial bundle in $\uFl_K (Y)$. Then there is an isomorphism of flat bundles, where $V$ is a finite dimensional $K$-vector space with the discrete topology
\[
\varphi : F \silo Y \times V \; .
\]
The right $G$-action on $F$ over the $G$-space $Y$ gives a right $G$-action on $Y \times V$ over the $G$-space $Y$. Since $Y$ is connected being a Galois covering, there is a representation $\rho : G \to \GL (V)$ such that we have:
\begin{equation}
\label{eq:34n}
(y , v)^{\sigma} = (y^{\sigma} , \rho (\sigma^{-1}) v) \quad \text{for} \; \sigma \in G \; , \; y \in Y \; , \; v \in V \; .
\end{equation}
Let $\uFl_K (X) (\pi)$ be the full subcategory of bundles $E$ in $\uFl_K (X)$ for which $\pi^* E$ is a trivial bundle in $\uFl_K (Y)$. Using \eqref{eq:4}, \eqref{eq:34n} and the discussion of morphisms between trivial bundles on a connected covering space in the proof of Proposition \ref{t22}, we obtain the following fact.

\begin{prop} \label{t31}
1) The functor
\[
\uRep_K (G) \longrightarrow \uFl_K (X) (\pi) \; , \; V \longmapsto \pi^G_* (Y \times V) = Y \times^G V
\]
is an equivalence of categories.\\
2) We have
\[
\pi^G_* (Y \times V)_x = \pi^{-1} (x) \times^G V \; .
\]
Fixing a point $y \in Y$ over $x$ this is canonically isomorphic to $V$. Using 1) we get an isomorphism (depending on $y$) between $G /_K$ and the Tannakian dual $\pi_K (X,x) (\pi)$ of $(\uFl_K (X) (\pi), \omega_x)$. 
\end{prop}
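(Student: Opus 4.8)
The plan is to prove the two assertions by exhibiting the displayed functor as an equivalence of Tannakian categories and then reading off the Tannakian dual via the chosen base point.

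For part 1), I would first check that the functor $V \mapsto \pi^G_*(Y \times V) = Y \times^G V$ lands in $\uFl_K(X)(\pi)$: for any $V$ in $\uRep_K(G)$ the bundle $Y \times V$ is trivial in $\uFl_K(Y)$ by construction, and since $\pi^*\pi^G_* = \id$ on $G$-locally constant sheaves by the equivalence \eqref{eq:4}, the pullback $\pi^*(Y \times^G V)$ is again $Y \times V$, which is trivial. Conversely, given $E$ in $\uFl_K(X)(\pi)$, the bundle $\pi^* E$ is trivial, so by \eqref{eq:34n} it is isomorphic to $Y \times V$ for some finite-dimensional $V$ carrying a representation $\rho : G \to \GL(V)$ determined by the $G$-action; applying $\pi^G_*$ and using $\pi^G_* \pi^* \silo \id$ recovers $E \cong Y \times^G V$. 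This gives essential surjectivity. For full faithfulness, I would invoke the analysis of morphisms between trivial bundles on the connected covering $Y$ carried out in the proof of Proposition \ref{t22}: a morphism $Y \times V_1 \to Y \times V_2$ is given by a map $p \mapsto \varphi_p$ which is locally constant, hence constant since $Y$ is connected (as a Galois covering), and the $G$-equivariance condition forces $\varphi_p$ to be $G$-equivariant as a map $V_1 \to V_2$; thus $G$-equivariant morphisms $Y \times V_1 \to Y \times V_2$ correspond exactly to morphisms of representations $V_1 \to V_2$. Transporting this across the equivalence \eqref{eq:4} shows the functor is fully faithful, completing part 1).

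For part 2), the formula $\pi^G_*(Y \times V)_x = \pi^{-1}(x) \times^G V$ is immediate from the definition of $\pi^G_*$ and the diagonal $G$-action, evaluated at the fibre over $x$. Fixing $y \in Y$ with $\pi(y) = x$ and using that $G$ permutes $\pi^{-1}(x)$ simply transitively, every class in $\pi^{-1}(x) \times^G V$ has a unique representative of the form $(y, v)$, giving a canonical $K$-linear isomorphism $\pi^{-1}(x) \times^G V \silo V$. This identifies the fibre functor $\omega_x$ on $\uFl_K(X)(\pi)$, transported through the equivalence of part 1), with the forgetful fibre functor on $\uRep_K(G)$. Since the Tannakian dual is a functor of the pair (category, fibre functor), the equivalence of part 1) together with the identification $\omega_x \cong (\text{forgetful functor})$ yields an isomorphism $\pi_K(X,x)(\pi) \cong \uAut^{\otimes}(\text{forgetful}) = G /_K$, where the last equality is the standard Tannakian reconstruction of a constant group scheme from its representation category.

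I expect the main obstacle to be the careful bookkeeping of the $G$-equivariance in the full-faithfulness step: one must verify that the constant value $\varphi_p = \varphi$ of a $G$-equivariant bundle morphism is genuinely $G$-equivariant as a linear map, i.e. that the twisting convention in \eqref{eq:34n} (with $\rho(\sigma^{-1})$ acting on $V$) combines with the relation $\rho_2(\sigma)\varphi_p(v) = \varphi_{p^{\sigma^{-1}}}(\rho_1(\sigma)v)$ from the proof of Proposition \ref{t22} to give exactly the condition $\varphi \rho_1(\sigma) = \rho_2(\sigma)\varphi$, with no stray inverses. The remaining compatibility with the tensor structure is routine once one notes, as remarked in the paragraph preceding \eqref{eq:4}, that inverse image commutes with tensor products, so that $\pi^G_*$ and $\pi^*$ are tensor functors and the equivalence of part 1) is an equivalence of $\otimes$-categories.
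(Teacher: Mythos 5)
Your proposal is correct and takes essentially the same route as the paper, which establishes Proposition \ref{t31} precisely by combining the $\otimes$-equivalence \eqref{eq:4}, the description \eqref{eq:34n} of $G$-actions on trivial bundles over the connected covering $Y$, and the analysis of morphisms between trivial equivariant bundles from the proof of Proposition \ref{t22} --- exactly the three ingredients you assemble. Your check that constancy of $p \mapsto \varphi_p$ turns the equivariance relation into $\varphi\,\rho_1(\sigma) = \rho_2(\sigma)\,\varphi$ with no stray inverses, and the identification of $\omega_x$ with the forgetful functor via simple transitivity of $G$ on $\pi^{-1}(x)$, are the same bookkeeping the paper relies on.
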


Since $\pi^*$ is exact and because of Corollary \ref{t23} any subobject in $\uFl_K (X)$ of an object in $\uFl_K (X) (\pi)$ lies in $\uFl_K (X) (\pi)$. It follows from \cite{DM} Proposition 2.21 that the induced morphism
\[
p  : \pi_K (X, x) \longrightarrow \pi_K (X, x) (\pi)
\]
is faithfully flat.

\begin{prop} \label{t32}
Let $\pi : Y \to X$ be a finite covering with group $G$. Choose a point $y \in Y$ and set $x = \pi (y)$. The following sequence of group schemes over $K$ is exact:
\[
1 \longrightarrow \pi_K (Y, y) \xrightarrow{i} \pi_K (X,x) \xrightarrow{p} \pi_K (X,x) (\pi) \longrightarrow 1 \, .
\]
The choice of $y$ over $x$ determines a canonical isomorphism $\pi_K (X,x) (\pi) = G /_K$ by Proposition \ref{t31}. 
\end{prop}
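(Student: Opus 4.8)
The plan is to verify the three conditions for exactness of a sequence of affine group schemes over $K$, reducing everything to the Tannakian dictionary of \cite{DM} together with the Galois descent \eqref{eq:4}. Two of the three conditions are already in place: $i = \pi_*$ is a closed immersion and $p$ is faithfully flat, as recalled above, and by Proposition \ref{t31} the quotient is $\pi_K (X,x)(\pi) = G /_K$ with the essential image of $p^*$ equal to $\uFl_K (X)(\pi)$. The first step is to check that $p \verk i$ is trivial: an object $E$ of $\uFl_K (X)(\pi)$ pulls back to a trivial bundle on $Y$ by definition, so the composite $\uFl_K (X)(\pi) \hookrightarrow \uFl_K (X) \xrightarrow{\pi^*} \uFl_K (Y)$ takes values in trivial bundles, and hence $p \verk i$ factors through the trivial group. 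This gives $\imm i \subseteq \ker p$, and since $i$ is a closed immersion it remains only to prove the reverse inclusion $\ker p \subseteq \imm i$.

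Next I would identify $\ker p$ with the smallest normal closed subgroup scheme of $\pi_K (X,x)$ containing $\imm i$. The key point is the dictionary: for $E$ in $\uFl_K (X)$ the restriction of the representation $E_x$ to $\imm i = \pi_K (Y,y)$ is the monodromy representation of $\pi^* E$ on the connected space $Y$, so $\imm i$ acts trivially on $E_x$ if and only if $\pi^* E$ is a trivial bundle, i.e. if and only if $E \in \uFl_K (X)(\pi)$. On the other hand, since $p$ is faithfully flat, $\ker p$ acts trivially on $E_x$ exactly when $E$ lies in the essential image of $p^*$, which is again $\uFl_K (X)(\pi)$. Thus $\imm i$ and $\ker p$ annihilate the same objects of $\uFl_K (X)$; as the full subcategory $\{ E : H \text{ acts trivially on } E_x \}$ depends only on the normal closure of a closed subgroup $H$, and $\ker p$ is already normal, this forces $\ker p$ to be the normal closure of $\imm i$.

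It therefore suffices to show that $\imm i = \pi_K (Y,y)$ is normal in $\pi_K (X,x)$, for then it coincides with its normal closure $\ker p$ and the sequence is exact. Here I would use the standard criterion that a closed subgroup scheme is normal as soon as, for every object $E$ of $\uFl_K (X)$, the maximal subspace $(E_x)^{\imm i}$ on which $\imm i$ acts trivially is a $\pi_K (X,x)$-subrepresentation of $E_x$. Now $(E_x)^{\imm i}$ corresponds to the maximal trivial subbundle $T \subseteq \pi^* E$, spanned by the global flat sections over the connected space $Y$. Since $\pi \verk \sigma = \pi$ for every deck transformation $\sigma \in G$, the bundle $\pi^* E$ carries its canonical $G$-equivariant structure, and $T$, being intrinsically defined, is a $G$-equivariant subbundle. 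By the descent equivalence $\uFl^G_K (Y) \simeq \uFl_K (X)$ of \eqref{eq:4}, $T$ is the pullback $\pi^* E_0$ of a subbundle $E_0 \subseteq E$ in $\uFl_K (X)$; evaluating at $x$ shows that $(E_x)^{\imm i} = (E_0)_x$ is indeed a subrepresentation. This verifies normality.

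The main obstacle is exactly this last inclusion $\ker p \subseteq \imm i$, i.e. the right-exactness, which is where the hypothesis that $\pi$ is Galois must be used: the transitive $G$-action on the fibre is what makes the invariants $(E_x)^{\imm i}$ descend from $Y$ to $X$, and so forces $\pi_K (Y,y)$ to be normal rather than merely a subgroup of $\ker p$ cut out up to conjugacy. Everything else — the two outer conditions and the identification of $\ker p$ with a normal closure — is formal from Proposition \ref{t31}, the descent \eqref{eq:4}, and the quotient formalism of \cite{DM}.
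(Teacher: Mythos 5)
Your first two steps are sound: $p \verk i$ is trivial for the reason you give, and your identification of $\ker p$ with the normal closure of $\imm i$ is correct, since a closed \emph{normal} subgroup scheme is determined by the full subcategory of representations it annihilates, and that subcategory depends only on the normal closure. The gap is in the third step. The ``standard criterion'' you invoke --- that a closed subgroup scheme $H \subseteq G$ is normal as soon as $V^H$ is a $G$-subrepresentation for every finite-dimensional representation $V$ of $G$ --- is false. Take $G = \mathrm{SL}_{2,\C}$ and $H = B$ a Borel subgroup: by complete reducibility, and since a $B$-fixed vector in an irreducible representation is a highest weight vector of weight $0$, one has $V^B = V^{\mathrm{SL}_2}$ for \emph{every} $V$, so $V^B$ is always a subrepresentation, yet $B$ is not normal. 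Worse, this example defeats your whole architecture, not just the quoted criterion: taking $L = B$, $A = \mathrm{SL}_2$ and $Q = 1$ (the trivial group is indeed the Tannakian dual of the subcategory of representations restricting trivially to $B$, because the normal closure of $B$ is all of $\mathrm{SL}_2$), your steps 1 and 2 hold verbatim --- $\imm i \subseteq \ker p$, $\ker p = \mathrm{SL}_2$ is the normal closure of $B$, and all invariant subspaces descend --- but the sequence $1 \to B \to \mathrm{SL}_2 \to 1 \to 1$ is not exact.

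What is missing is exactly condition (iii)(c) of \cite{EHS} Theorem A1: every object of $\uFl_K (Y)$ must embed into $\pi^* E$ for some $E$ in $\uFl_K (X)$. This is what fails for the Borel (the character $t \mapsto t^{-1}$ of $B$ embeds into no restricted $\mathrm{SL}_2$-representation), and what holds in the covering situation via $F \subseteq \pi^* \pi_* F = \bigoplus_{\sigma \in G} \sigma^* F$, as recalled in the paper just before the proposition --- you in fact use this implicitly when quoting that $i$ is a closed immersion. The paper's proof consists of verifying the three conditions (a), (b), (c) of \cite{EHS} Theorem A1 (iii): (a) is definitional, (b) is the descent of the maximal trivial subbundle (your verification of this point coincides with the paper's, including the $G$-equivariance of the subbundle generated by global sections), and (c) is the embedding statement above. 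Deducing exactness --- equivalently, normality of $\imm i$ --- from (a)+(b)+(c) is precisely the content of the cited theorem; it cannot be obtained from (a)+(b) alone, which is what your argument attempts.
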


\begin{proof}
It remains to show that $i$ is an isomorphism of $\pi_K (Y,y)$ onto $\ker i$. In \cite{EHS} Theorem A1 (iii), a Tannakian criterion is given for this. Translated to our context the following three conditions need to be verified:\\
a) For a bundle $E$ in $\uFl_K (X)$ the bundle $\pi^* E$ is trivial if and only if $E$ is in $\uFl_K (X) (\pi)$. \\
b) Let $F_0$ be the maximal trivial subbundle of $\pi^* E$ in $\uFl_K (Y)$. Then there is a subbundle $E_0 \subset E$ in $\uFl_K (X)$ such that $F_0 \cong \pi^* E_0$.\\
c) Every bundle $F$ in $\uFl_K (Y)$ is a subbundle of $\pi^* E$ for some $E$ in $\uFl_K (X)$.\\
Condition a) is true by the definition of $\uFl_K (X) (\pi)$. Condition c) has already been verified. The maximal trivial subbundle $F_0$ of $\pi^* E$ exists. It is the subbundle generated by the global sections $\Gamma (Y , \pi^* E)$. This description also shows that it is a $G$-subbundle of the $G$-bundle $\pi^* E$ on the $G$-space $Y$. Using the equivalence of categories \eqref{eq:4} and setting
\[
E_0 := \pi^G_* F_0 \subset \pi^G_* \pi^* E \cong E \; ,
\]
we find
\[
F_0 \cong \pi^* \pi^G_* F_0 = \pi^* E_0
\]
as desired. 
\end{proof}

The finite coverings with the obvious morphisms form a category $\uFCov (X)$. According to \cite{KS} Proposition 2.9, $\uFCov (X)$ is a Galois category in the sense of \cite{SGA1} Expos\'e V.4 if $X$ is connected. For any point $x \in X$ there is a natural fibre functor
\[
\Phi_x : \uFCov (X) \longrightarrow \uFSet
\]
into the category of finite sets which on objects is given by
\[
\Phi_x (\pi : Y \to X) = \pi^{-1} (x) \; .
\]
For a connected topological space $X$ and a point $x \in X$, Kucharczyk and Scholze define the \'etale fundamental group $\pi^{\et}_1 (X,x)$ to be the automorphism group of $\Phi_x$. More generally, for points $x_1 , x_2 \in X$ we set
\[
\pi^{\et}_1 (X , x_1 , x_2) = \Iso (\Phi_{x_1} , \Phi_{x_2}) \; .
\]
It is a non-empty profinite set by \cite{SGA1} Expos\'e V, Corollaire 5.7. Moroever, $\pi^{\et}_1 (X , x_1 , x_2)$  is a left- resp. right torsor under the profinite groups $\pi^{\et}_1 (X , x_1)$ resp. $\pi^{\et}_1 (X , x_2)$. The \'etale fundamental groupoid $\Pi^{\et}_1 (X)$ of a topological space $X$ is defined as the small topological category whose objects are the points of $X$ and for which $\Mor (x_1 , x_2) = \emptyset$ if $x_1$ and $x_2$ lie in different connected components of $X$ and
\[
\Mor (x_1 , x_2) = \pi^{\et}_1 (Z , x_1 , x_2)
\]
if $x_1$ and $x_2$ lie in the same connected component $Z$ of $X$. 

We will now relate $\pi_K (X , x_1 , x_2)$ to $\pi^{\et}_1 (X , x_1 , x_2)$. Namely let $\uFFl_K (X)$ be the full subcategory of $\uFl_K (X)$ whose objects are the bundles $E$ for which there exists a finite covering $\pi : Y \to X$ such that $\pi^* E$ is a trivial flat bundle i.e. isomorphic in $\uFl_K (Y)$ to a bundle of the form $Y \times K^r , r \ge 0$. Given two finite coverings $\pi_1 : Y_1 \to X$ and $\pi_2 : Y_2 \to X$ the fibre product $\pi : Y = Y_1 \times_X Y_2 \to X$ is a finite covering which factors over $\pi_1$ and $\pi_2$. It follows that if $E_1 , E_2$ are in $\uFFl_K (X)$ then $E_1 \oplus E_2 , E_1 \otimes E_2$ and the $\Hom$ bundle $\uHom (E_1 , E_2)$ are in $\uFFl_K (X)$ as well. Hence for connected $X$ the category $\uFFl_K (X)$ is neutral Tannakian with fibre functor $\omega^F_x$ defined as the restriction of $\omega_x$ to $\uFFl_K (X)$. Let $\pi^{\et}_K (X,x)$ be the Tannakian dual group of $(\uFFl_K (X) , \omega^F_x)$
\[
\pi^{\et}_K (X , x) = \uAut^{\otimes} (\omega^F_x) \; .
\]
More generally, for points $x_1 , x_2 \in X$ we set
\[
\pi^{\et}_K (X , x_1 , x_2) = \uIso (\omega^F_{x_1} , \omega^F_{x_2}) \; .
\]
We define $\Pi^{\et}_K (X)$ similarly as $\Pi_K (X)$. Passing to the $K$-valued points of the morphism schemes we obtain a topological category $\Pi^{\et}_K (X) (K)$. 

We will now define a parallel transport along (``homotopy classes'' of) \'etale paths $\gamma$ i.e. elements of $\pi^{\et}_1 (X , x_1, x_2)$ for bundles $E$ in $\uFFl_K (X)$. Choose a finite connected covering $\pi : Y \to X$ such that $\pi^* E$ is trivial. Let $y_1 \in Y$ be a point with $\pi (y_1) = x_1$. Then $\gamma (y_1) \in Y$ is a point with $\pi (\gamma (y_1)) = x_2$. Since $\pi^* E$ is trivial and $Y$ is connected the evaluation maps $\ev_{y_1}$ and $\ev_{\gamma (y_1)}$ are isomorphisms:
\begin{equation}
\label{eq:10}
E_{x_1} = (\pi^* E)_{y_1} \xleftarrow{\overset{\ev_{y_1}}{\sim}} \Gamma (Y , \pi^* E) \xrightarrow{\overset{\ev_{\gamma (y_1)}}{\sim}} (\pi^* E)_{\gamma (y_1)} = E_{x_2} \; .
\end{equation}
Let 
\[ \rho_{\gamma} (E) : E_{x_1} \silo E_{x_2}
\]
be the resulting isomorphism.

\begin{theorem}
\label{t33}
1) For a connected topological space $X$ and points $x_1 , x_2 \in X$ the isomorphisms $\rho_{\gamma} (E)$ for $E$ in $\uFFl_K (X)$ are well defined and give rise to an isomorphism of the fibre functors $\omega^F_{x_1}$ and $\omega^F_{x_2}$ over $K$, i.e. to an element $\rho_{\gamma}$ of $\pi^{\et}_K (X , x_1 , x_2) (K)$.\\
2) The resulting functor
\[
\rho : \Pi^{\et}_1 (X) \longrightarrow \Pi^{\et}_K (X) (K)
\]
is an isomorphism of topological categories. In particular the maps
\[
\rho : \pi^{\et}_1 (X , x_1 , x_2) \silo \pi^{\et}_K (X , x_1 , x_2) (K) \; , \; \gamma \mapsto \rho_{\gamma}
\]
are homeomorphisms of pro-finite spaces, and for $x \in X$ the maps
\[
\rho : \pi^{\et}_1 (X, x) \silo \pi^{\et}_K (X,x) (K) \; , \; \gamma \mapsto \rho_{\gamma}
\]
are topological group isomorphisms.
\end{theorem}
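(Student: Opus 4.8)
The plan is to prove 1) by checking that the formula \eqref{eq:10} defining $\rho_\gamma (E)$ is independent of all auxiliary choices and respects the tensor structure, and to prove 2) by a layer-by-layer comparison: I will exhibit $\rho$ as a morphism of the two projective systems defining the profinite groupoids which, on each finite layer indexed by a finite Galois covering, is the tautological identification.

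For 1), fix $E$ in $\uFFl_K (X)$ and $\gamma \in \pi^{\et}_1 (X, x_1 , x_2)$, and pass to a connected finite Galois covering $\pi : Y \to X$ with group $G$ dominating a trivializing covering of $E$, so that $\pi^* E$ is trivial. Then $\gamma$ induces a $G$-equivariant bijection $\gamma_Y : \pi^{-1}(x_1) \to \pi^{-1}(x_2)$, equivariance being the naturality of $\gamma$ with respect to deck transformations. Independence of the chosen point $y_1 \in \pi^{-1}(x_1)$ follows from this equivariance together with the fact that, as $y'$ runs through a fibre, the canonical identifications $(\pi^* E)_{y'} = E_{\pi (y')}$ differ exactly by the $G$-action \eqref{eq:34n} on $Y \times V$; the two resulting factors cancel. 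Independence of the covering follows from the same naturality applied to a dominating covering $Y' \to Y$, since pullback of sections commutes with evaluation. Functoriality in $E$, compatibility with $\otimes$ (so that $\rho_\gamma \in \pi^{\et}_K (X, x_1 , x_2)(K)$), and composability $\rho_{\gamma' \gamma} = \rho_{\gamma'} \rho_\gamma$ are then routine, because over a connected trivializing $Y$ both global sections and evaluation commute with morphisms and with tensor products, and the evaluation maps in \eqref{eq:10} telescope.

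For 2), the functor $\rho$ is the identity on objects, so it remains to treat the morphism spaces, which I will do by exhibiting $\rho$ as an isomorphism of the two defining projective systems, uniformly for the groups $\pi^{\et}_1 (X,x)$ and the non-empty torsors $\pi^{\et}_1 (X, x_1 , x_2)$. Writing $\uFFl_K (X) = \bigcup_\pi \uFl_K (X)(\pi)$ as the filtered union over finite Galois coverings, the Tannakian dual is the projective limit, so $\pi^{\et}_K (X, x_1 , x_2) = \varprojlim_\pi \pi_K (X, x_1 , x_2)(\pi)$. By Proposition \ref{t31} the layer $\uFl_K (X)(\pi)$ is equivalent to $\uRep_K (G)$ with $\omega_{x_i}$ corresponding to the fibre functor $V \mapsto \pi^{-1}(x_i) \times^G V$; hence $\pi_K (X, x_1 , x_2)(\pi)(K)$ is canonically the set $\mathrm{Iso}_G (\pi^{-1}(x_1) , \pi^{-1}(x_2))$ of $G$-equivariant bijections, which is exactly the image of $\pi^{\et}_1 (X, x_1 , x_2)$ in $\Iso (\Phi_{x_1}(Y) , \Phi_{x_2}(Y))$.

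The key step is to identify $\rho$ on each such layer. Taking $E = Y \times^G V$, fixing $y_1$ and using $\pi^* E \cong Y \times V$ with the $G$-action \eqref{eq:34n}, a direct computation shows that under the identifications $E_{x_i} \cong V$ furnished by $y_1$ and $\gamma_Y (y_1)$ the map $\rho_\gamma (E)$ is induced by $\gamma_Y$; in the group case $x_1 = x_2 = x$ this reads $\rho_\gamma (E) = \rho_V (g_\gamma)$, where $\gamma_Y (y) = y\, g_\gamma$, so that the $\pi$-component of $\rho$ is precisely the canonical projection $\pi^{\et}_1 (X, x) \to G = \Aut (Y/X)$ of the Galois category $\uFCov (X)$. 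Keeping the left/right and inverse conventions of \eqref{eq:34n} consistent, so that the two descriptions of the $G$-torsor of isomorphisms agree on the nose, is the part I expect to be the main obstacle; everything else is formal. Granting it, $\rho$ is a morphism of projective systems which on each layer is the identification $\pi_K (X, x_1 , x_2)(\pi)(K) = \mathrm{Iso}_G (\pi^{-1}(x_1), \pi^{-1}(x_2))$, hence an isomorphism on limits. Since $\pi^{\et}_1 (X, x_1 , x_2) = \varprojlim_\pi \mathrm{Iso}_G (\pi^{-1}(x_1), \pi^{-1}(x_2))$ by the fundamental theorem for the Galois category $\uFCov (X)$ (\cite{SGA1}, \cite{KS}), this shows that $\rho$ is an isomorphism of profinite groupoids; as it respects all projections onto finite layers, it is a homeomorphism on morphism spaces and a topological group isomorphism on each $\pi^{\et}_1 (X, x)$.
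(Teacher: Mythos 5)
Your proposal is correct and follows essentially the same route as the paper: well-definedness of $\rho_\gamma(E)$ via domination by a connected finite Galois covering and deck-equivariance of $\gamma_Y$, then a layer-by-layer identification over finite Galois coverings (using Proposition \ref{t31}) of both morphism spaces with (torsors under) the Galois group $G$, followed by passage to the projective limit. The only cosmetic difference is at the layer level: the paper proves each layer bijection by injectivity, tested on the algebra bundle $A = \pi_* \uK$, plus a counting argument between finite groups of equal order, whereas you verify it by the explicit computation $\rho_\gamma(E) = \rho_V(g_\gamma)$ identifying the map with the canonical projection onto $G$ --- an alternative the paper itself points out ("One can see surjectivity of \eqref{eq:12} also directly\dots").
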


\begin{proof}
1) We first prove that $\rho_{\gamma} (E)$ is independent of the choice of the point $y_1$ over $x_1$. There is a finite Galois covering $\tY \xrightarrow{\tpi} X$ which factors over $\pi$. Pulling back to $\tY$ and writing down obvious commutative diagrams one sees that to prove independence of $y_1$ over $x_1$ we may pass to $\tY$ and thus assume that $Y$ is Galois with group $G$. Let $y'_1 \in Y$ be another point over $x$. Choose an element $\sigma \in G$ with $y^{\sigma}_1 = y'_1$. Noting that $(\gamma y_1)^{\sigma} = \gamma (y^{\sigma}_1) = \gamma y'_1$, we obtain the commutative diagram
\[
\xymatrix{
E_{x_1} \ar@{=}[r] & (\pi^* E)_{y'_1} \ar[d]^{\wr}_{\sigma^*} & \Gamma (Y , \pi^* E) \ar[l]_{\overset{\ev_{y'_1}}{\sim}} \ar[d]^{\wr}_{\sigma^*} \ar[r]^{\overset{\ev_{\gamma y'_1}}{\sim}} & (\pi^* E)_{\gamma  y'_1} \ar[d]^{\wr}_{\sigma^*} \ar@{=}[r] & E_{x_2} \\
& (\sigma^* \pi^* E)_{y_1} \ar@{=}[d] & \Gamma (Y , \sigma^* \pi^* E) \ar[l]_{\overset{\ev_{y_1}}{\sim}} \ar[r]^{\overset{\ev_{\gamma_{y_1}}}{\sim}} \ar@{=}[d] & (\sigma^* \pi^* E)_{\gamma y_1} \ar@{=}[d] & \\
E_{x_1} \ar@{=}[r] & (\pi^* E)_{y_1} & \Gamma (Y , \pi^* E) \ar[l]_{\overset{\ev_{y_1}}{\sim}} \ar[r]^{\overset{\ev_{\gamma y_1}}{\sim}} & (\pi^* E)_{\gamma  y_1} \ar@{=}[r] & E_{x_2}
}
\]
It follows that
\[ 
\rho_{\gamma} (E) = \ev_{\gamma y_1} \verk \ev^{-1}_{y_1} = \ev_{\gamma y'_1} \verk \ev^{-1}_{y'_1}
\]
is independent of the choice of $y_1$ over $x_1$. Independence of the connected finite covering trivializing $E$ follows by dominating two such coverings $Y_1 \to X$ and $Y_2 \to X$ by a third one e.g. by a connected component of $Y_1 \times_X Y_2$. Thus the isomorphism $\rho_{\gamma} (E)$ is well defined. Elementary arguments show that the family $(\rho_{\gamma} (E))$ for $E$ in $\uFFl_K (X)$ defines an isomorphism from the $\otimes$-functor $\omega^F_{x_1}$ to the $\otimes$-functor $\omega^F_{x_2}$. Thus one obtains an element 
\[
\rho_{\gamma} \in \uIso^{\otimes} (\omega^F_{x_1} , \omega^F_{x_2}) (K) \; .
\]
By the construction of the parallel transport $\rho_{\gamma} (E)$ it is clear that for $\gamma \in \pi^{\et}_1 (X , x_1, x_2)$ and $\gamma' \in \pi^{\et}_1 (X , x_2 , x_3)$ we have
\[
\rho_{\gamma' \verk \gamma} (E) = \rho_{\gamma'} (E) \verk \rho_{\gamma} (E)
\]
and hence
\[
\rho_{\gamma' \verk \gamma} = \rho_{\gamma'} \verk \rho_{\gamma} \; .
\]
It follows that
\[
\rho : \Pi^{\et}_1 (X) \to \Pi^{\et}_K (X) (K)
\]
is a functor. In particular the map
\[
\rho : \pi^{\et}_1 (X, x) \longrightarrow \pi^{\et}_K (X , x) (K)
\]
is a homomorphism of groups for all $x \in X$. \\
2) Given a finite Galois covering $\pi : Y \to X$ with group $G$, by construction the parallel transport on bundles in $\uFl_K (X) (\pi)$ along $\gamma \in \pi^{\et}_1 (X , x_1 , x_2)$ depends only on the bijection
\[
\gamma (Y) : F_{x_1} (Y) = \pi^{-1} (x_1) \longrightarrow \pi^{-1} (x_2) = F_{x_2} (Y) \; .
\]
Hence we get a map
\begin{equation}
\label{eq:11}
\Imm (\pi^{\et}_1 (X , x_1 , x_2) \longrightarrow \Bij (\pi^{-1} (x_1) , \pi^{-1} (x_2)) \longrightarrow (\pi_K (X , x_1 , x_2) (\pi)) (K) \; .
\end{equation}
Here
\[
\pi_K (X , x_1 , x_2) (\pi) = \uIso^{\otimes} (\omega^{\pi}_{x_1} , \omega^{\pi}_{x_2})
\]
where $\omega^{\pi}_x$ is the restriction of $\omega_x$ to $\uFl_K (X) (\pi)$. 

We claim that \eqref{eq:11} is a bijection. In order to show that \eqref{eq:11} is injective we need to show that for each $\gamma$ the bijection $\gamma (Y)$ is uniquely determined by the parallel transport $\rho_{\gamma}$ on bundles $E$ in $\uFl_K (X) (\pi)$. Consider $A = \pi_* \uK$ in $\uFl_K (X) (\pi)$ and choose a point $y_1 \in Y$ over $x_1$. By construction
\[
\rho_{\gamma} (A) : A_{x_1} = K^{\pi^{-1} (x_1)} \silo K^{\pi^{-1} (x_2)} = A_{x_2}
\]
sends $\delta^1_y$ to $\delta^2_{\gamma_y}$. Here $\delta^1_y : \pi^{-1} (x_1) \to K$ is $= 1$ on $y$ and $ = 0$  on all other points, and $\delta^2_{\gamma y} : \pi^{-1} (x_2) \to K$ is $= 1$ on $\gamma y$ and $= 0$ on the other points. Hence we recover $\gamma (Y)$, the image of $\gamma$ in $\Bij (\pi^{-1} (x_1) , \pi^{-1} (x_2))$ uniquely from $\rho_{\gamma} (A)$ and hence from the image of $\rho_{\gamma}$ in $\pi_K (X , x_1 , x_2) (\pi) (K)$. Since $X$ is connected, $\pi^{\et}_1 (X , x_1 , x_2)$ is not empty, \cite{SGA1} V, Corollaire 5.7. Hence both the source and the target of \eqref{eq:11} are non-empty sets. Since they are principal homogenous spaces, for bijectivity of \eqref{eq:11} it suffices to show that the group homomorphism
\begin{equation}
\label{eq:12}
\Imm (\pi^{\et}_1 (X , x) \longrightarrow \Bij (\pi^{-1} (x))) \longrightarrow (\pi_K (X,x) (\pi)) (K)
\end{equation}
is an isomorphism for any $x \in X$. We have seen that it is injective. Since $\pi : Y \to X$ is a Galois covering with group $G$ the source is isomorphic to $G$. By Proposition \ref{t31} the affine group $\pi_K (X,x) (\pi)$ is isomorphic to $G /_K$. Hence the target is isomorphic to $G$ as well. It follows that \eqref{eq:12} is an isomorphism. One can see surjectivity of \eqref{eq:12} also directly by studying $\alpha \in (\pi_K (X , x) (\pi)) (K) = \Aut^{\otimes} (\omega^{\pi}_x)$ on the $\otimes$-generator $A = \pi_* \uK$ of $\uFl_K (X) (\pi)$ and noting that $A$ is a bundle of $K$-algebras with a $G$-operation. Compatibility of $\alpha$ with the multiplication $A \otimes A \to A$ and the $G$-action show that $\alpha (A)$ and hence $\alpha$ come from the source of \eqref{eq:12}. 

Taking the projective limit over all finite Galois coverings $\pi : Y \to X$ of the bijections \eqref{eq:11} we obtain a homeomorphism of pro-finite spaces
\[
\rho : \pi^{\et}_1 (X , x_1 , x_2) \silo \pi^{\et}_K (X , x_1 , x_2) (K) \; .
\]
The remaining assertions follow immediately.
\end{proof}

\begin{prop}
\label{t34}
Let $X$ be a connected topological space, $x \in X$ and $G$ a finite group. A vector bundle $E$ in $\uFl_K (X)$ is trivialized by a finite Galois covering $\pi : Y \to X$ with group $G$ if and only if there is a faithfully flat morphism $\pi_K (X,x) \twoheadrightarrow G /_K$ such that the monodromy representation of $E$ factors
\begin{equation}\label{eq:13nn}
\pi_K (X,x) \twoheadrightarrow G/_K \twoheadrightarrow G_E \subset \GL_{E_x} \; .
\end{equation}
In this case we have $E \cong Y \times^G E_x$ in $\uFl_K (X)$ and the monodromy group $G_E$ of $E$ is a constant group scheme $G_E = G_E (K) /_K$ where $G_E (K)$ is a quotient of $G$.
\end{prop}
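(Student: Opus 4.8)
The plan is to treat the two implications separately, using throughout the identification $\pi_K(X,x)(\pi) = G /_K$ of Propositions~\ref{t31} and~\ref{t32} together with the torsor construction $P(G_K)$ of Theorems~\ref{t24n} and~\ref{t26n}. The forward implication merely reads off the quotient from a given covering, while the converse must manufacture a geometric covering out of the abstract group datum; I expect the latter to be the only point requiring real input.

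For the forward direction, suppose $E$ is trivialized by a finite Galois covering $\pi : Y \to X$ with group $G$, i.e. $E$ lies in $\uFl_K(X)(\pi)$. The equivalence $\uRep_K(G) \silo \uFl_K(X)(\pi)$, $V \mapsto Y \times^G V$, of Proposition~\ref{t31} sends $E_x$ to $E$, which gives $E \cong Y \times^G E_x$ immediately. Since $E$ lies in $\uFl_K(X)(\pi)$, its monodromy representation factors through the quotient $p : \pi_K(X,x) \twoheadrightarrow \pi_K(X,x)(\pi)$; this $p$ is faithfully flat by the discussion preceding Proposition~\ref{t32} and has target $G /_K$ by Propositions~\ref{t31} and~\ref{t32}. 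The induced map $G /_K \twoheadrightarrow G_E$ is again faithfully flat, because the monodromy map $\pi_K(X,x) \twoheadrightarrow G_E$ is, and composing produces the factorization~\eqref{eq:13nn}.

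For the converse, I would feed the given faithfully flat quotient $q : \pi_K(X,x) \twoheadrightarrow G /_K$ directly into \eqref{eq:20n} to form $P = P(G /_K)$. As $G /_K$ is a \emph{finite} group scheme quotient, Theorem~\ref{t26n} shows that $P$ is a \emph{connected} torsor for the finite group $(G /_K)(K) = G$, hence a finite Galois covering with group $G$. Since the monodromy of $E$ factors over $q$ by hypothesis, Theorem~\ref{t24n}(1) guarantees that pullback along $\pi : P \to X$ trivializes $E$. Thus $E$ is trivialized by the finite Galois covering $P \to X$ with group $G$, which closes the equivalence.

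The remaining assertions are then formal. With $Y$ denoting the trivializing Galois covering (given in one direction, taken to be $P$ in the other), $E \cong Y \times^G E_x$ holds by Proposition~\ref{t31} exactly as above. Because the monodromy of $E$ factors through the finite group scheme $G /_K$, the monodromy group $G_E$ is finite, so $G_E = G_E(K) /_K$ is constant by Theorem~\ref{t25n}(3); and since $G /_K \twoheadrightarrow G_E$ is a faithfully flat surjection of constant finite group schemes, it arises from a surjection of groups $G \twoheadrightarrow G_E(K)$, exhibiting $G_E(K)$ as a quotient of $G$. The one genuinely non-formal ingredient is the connectedness of $P(G /_K)$ invoked in the converse: without it $P$ would be merely a $G$-torsor, i.e. a disjoint union of coverings, rather than a Galois covering with group $G$, so Theorem~\ref{t26n} is the real crux of the argument.
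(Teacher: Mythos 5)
Your proof is correct and takes essentially the same route as the paper: the forward direction extracts the factorization from Propositions \ref{t31} and \ref{t32}, and the converse applies Theorem \ref{t26n} to obtain a connected $G$-torsor (hence a finite Galois covering) and Theorem \ref{t24n}(1) to trivialize $E$ on it, which is exactly the paper's argument. Your closing paragraph (constancy of $G_E$ via Theorem \ref{t25n}(3) and the surjection $G \twoheadrightarrow G_E(K)$) simply spells out details the paper delegates to Propositions \ref{t31} and \ref{t32}.
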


\begin{proof}
If $E$ is trivialized by $\pi : Y \to X$ all claims follow from Propositions \ref{t31} and \ref{t32}. Now assume that we have a factorization \eqref{eq:13nn}. Applying Theorem \ref{t26n} to the quotient $G_K = G /_K$ of $\pi_K (X,x)$ we get a connected torsor $\pi: P \to X$ for the group $G_K (K) = G$, in other words a finite Galois covering with group $G$. According to Theorem \ref{t24n}, 1) the bundle $\pi^* E$ is trivial in $\uFl_K (P)$. 
\end{proof}

\begin{cor}
\label{t35}
Let $X$ be a connected topological space, $x \in X$, $K$ a field and $E$ a vector bundle in $\uFl_K (X)$ of rank $r$. Then $E$ has a reduction of structure group to a finite subgroup $G$ of $\GL_r (K)$ if and only if the monodromy group $G_E$ is a constant finite group scheme. In this case the following is true:\\
a) $G_E$ is a subquotient of $G /_K$.\\
b) Choosing a basis of $E_x$ and viewing $G_E (K)$ as a subgroup of $\GL_r (K)$, the structure group of $E$ can be reduced to $G_E (K)$, and up to conjugacy in $\GL_r (K)$ this is the smallest subgroup of $\GL_r (K)$ for which this is possible.
\end{cor}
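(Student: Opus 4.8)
The plan is to reduce the entire corollary to Proposition \ref{t22} a) and Theorem \ref{t25n}, exploiting one elementary observation: for a \emph{finite} subgroup $G \subseteq \GL_r (K)$ the Zariski closure $\oG$ is nothing but the constant group scheme $G /_K$, so that the scheme-theoretic inclusion $G_E \subseteq \oG$ supplied by Proposition \ref{t22} a) becomes, on $K$-points, an honest inclusion of finite subgroups of $\GL_r (K)$. Everything then follows by keeping track of finiteness and of one change of basis.

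First I would prove the equivalence. For the forward direction, a reduction of structure group to a finite $G \subseteq \GL_r (K)$ is by definition a $G$-torsor $P$ with $E \cong P \times^G K^r$, so Proposition \ref{t22} a) yields, for a suitable identification $E_x = K^r$, a closed immersion $G_E \hookrightarrow \oG = G /_K$. As $G /_K$ is a finite $K$-scheme, its closed subscheme $G_E$ is finite, whence Theorem \ref{t25n} 3) gives that $G_E$ is the constant finite group scheme $G_E (K) /_K$. Conversely, if $G_E$ is constant and finite then, after fixing a basis of $E_x$, the group $G_E (K)$ is a finite subgroup of $\GL_r (K)$, and Theorem \ref{t25n} 2) says exactly that the structure group of $E$ reduces to $G_E (K)$. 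This closes the equivalence and at the same time establishes the first assertion of b).

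Assertion a) requires no new work: the closed immersion $G_E \hookrightarrow \oG = G /_K$ obtained above exhibits $G_E (K)$, on $K$-points, as a subgroup of $G$, hence in particular as a subquotient, so $G_E = G_E (K) /_K$ is a subquotient of $G /_K$. For the minimality in b), suppose $E$ also reduces to a finite subgroup $H \subseteq \GL_r (K) = \GL (E_x)$, say via an $H$-torsor $Q$ with $E \cong Q \times^H K^r$. Choosing a point of $Q$ over $x$ identifies $E_x \cong K^r$ in a way that differs from the fixed basis by some $g \in \GL_r (K)$, and in this $Q$-adapted basis Proposition \ref{t22} a) gives $G_E \subseteq \overline{H}$. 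Since $H$ is finite, $\overline{H} = H /_K$, so passing to $K$-points we get $G_E (K) \subseteq H$ in the $Q$-basis, i.e. $G_E (K) \subseteq g H g^{-1}$ in the fixed basis. Thus every finite $H$ to which $E$ reduces contains a conjugate of $G_E (K)$, which is precisely the claim that $G_E (K)$ is smallest up to conjugacy in $\GL_r (K)$.

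The one point demanding care — and the main obstacle — is this basis bookkeeping. Proposition \ref{t22} a) only produces the inclusion $G_E \subseteq \oG$ after a trivialization of $E_x$ coming from a chosen point of the torsor, and that trivialization need not agree with the fixed basis used to realize $G_E (K)$ and $H$ inside $\GL_r (K)$; one must therefore carry the resulting conjugation through the argument, which is exactly what turns ``$G_E (K) \subseteq H$'' into ``$G_E (K)$ conjugate into $H$''. Apart from this, the proof is a direct assembly of the cited results, the decisive simplification being that finiteness of the relevant subgroup collapses its Zariski closure to a constant group scheme and so converts the scheme-level inclusion $G_E \subseteq \oG$ into the desired inclusion of finite groups on $K$-points.
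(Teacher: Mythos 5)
Your proof is correct, but it takes a genuinely different route from the paper's. The paper works inside the finite-covering framework of Section \ref{sec:3}: given a reduction to a finite group $G$, it passes to a connected component $P_0$ of the $G$-torsor $P$ with stabilizer $G_0 \subseteq G$ --- legitimate because connected components of \emph{finite} coverings of a connected space are again finite coverings (\cite{KS}, Proposition 2.7, quoted at the start of Section \ref{sec:3}) --- so that $P_0 \to X$ is a finite Galois covering trivializing $E$, and then applies Proposition \ref{t34}; this is why the paper records a) only in the ``subquotient'' form, $G_E(K)$ being obtained there as a quotient of $G_0$. You avoid coverings and connected components entirely: you apply Proposition \ref{t22} a) directly to the possibly disconnected torsor, using the (correct) observation that the Zariski closure of a finite subgroup of $\GL_r(K)$ is the constant group scheme $G/_K$, and you quote Theorem \ref{t25n} 2), 3) for the implication finite $\Rightarrow$ constant and for the reduction of the structure group to $G_E(K)$; there is no circularity, since both results live in Section \ref{sec:2}. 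Your route buys a sharper form of a) --- $G_E$ is a closed subgroup scheme of $G/_K$, not merely a subquotient --- and an explicit treatment of the minimality in b), with the conjugation bookkeeping between the fixed basis and the torsor-adapted basis spelled out, where the paper only says ``the remaining assertions follow''. One shared caveat: both your argument and the paper's establish minimality only among \emph{finite} subgroups $H$ to which the structure group reduces. For infinite $H$ your key step breaks down because $\overline{H}(K)$ can be strictly larger than $H$, so the inclusion $G_E \subseteq \overline{H}$ no longer descends to an inclusion of $G_E(K)$ into $H$; the paper's component argument likewise requires the covering to be finite. Since the corollary is about finite structure groups this is the natural reading, but the literal phrase ``smallest subgroup of $\GL_r(K)$'' is, in both proofs, verified only within the class of finite subgroups.
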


\begin{rem}
In Theorem \ref{t25n}, 3) or Theorem \ref{t26n} it was shown that if $G_E$ is a finite group scheme over $K$, then it is constant.
\end{rem}

\begin{proof}
If the structure group of $E$ can be reduced to a finite group $G$ in $\GL_r (K)$, there is a principal $G$-bundle $P \xrightarrow{\pi} X$ such that $E \cong P \times^G K^r$ and hence $\pi^* E$ is a trivial bundle in $\uFl_K (P)$. Let $P_0$ be a connected component of the finite covering $P$ and let $G_0 \subset G$ consist of $\sigma \in G$ with $\sigma (P_0) = P_0$. Then $\pi \, |_{P_0} : P_0 \to X$ is a Galois covering which trivializes $E$. Proposition \ref{t34} implies that $G_E$ is a constant finite group scheme over $K$ and that $G_E (K)$ is a quotient of $G_0$ and hence a subquotient of $G$. If on the other hand, $E$ has a constant monodromy group $G_E$, then by Proposition \ref{t34}, taking $G = G_E (K)$ there is a Galois covering $\pi : Y \to X$ with $E \cong Y \times^G E_x$. This description of $E$ shows that viewing $G_E (K)$ as a subgroup of $\GL_r (K)$ (unique up to conjugation) via an isomorphism $E_x \cong K^r$, the structure group of $E$ can be reduced to $G_E (K)$. The remaining assertions follow.
\end{proof}

\begin{exmp}
In Remark \ref{t25} we have seen that on a compact connected space $X$ any flat vector bundle $E$ in $\uFl_K (X)$ of rank $r$ has a reduction of structure group to $\GL_r (A_0)$ where $A_0$ is a finitely generated $\Z$-algebra in $K$. Reducing modulo a maximal ideal $\emm_0$ of $A_0$ we obtain a vector bundle $E_{\emm_0}$ in $\uFl_{k_0} (X)$ where $k_0 = A_0 / k_0$ is a finite field. The monodromy group of $E_{\emm_0}$ is contained in $\GL_r (k_0)$ and is therefore finite. The construction in the proof of Proposition \ref{t34} attaches a Galois covering $\pi : P \to X$ to $E_{\emm_0}$ with Galois group the monodromy group of $E_{\emm_0}$ in $\GL_r (k_0)$. 
\end{exmp}

Let $H = \spec B$ be an affine group scheme over a field $K$. The largest separable subalgebra $B^{\et}$ of $B$ is a Hopf algebra and $H^{\et} = \spec B^{\et}$ is a pro-\'etale group scheme over $K$. It is the maximal pro-\'etale quotient of $H$, any morphism of $H$ to an \'etale group factors uniquely over the faithfully flat projection $H \to H^{\et}$. There is a natural exact sequence
\[ 
1 \longrightarrow H^0 \longrightarrow H \longrightarrow H^{\et} \longrightarrow 1 \; ,
\]
where $H^0$ is the connected component of the identity. Thus $H^{\et}$ may also be viewed as the group scheme of connected components of $H$ and there is the alternative notation $H^{\et} = \pi_0 (H)$. A pointed finite (Galois) covering $(Y, y) \to (X,x)$ is a finite (Galois) covering $\pi_Y : Y \to X$ with $\pi_Y (y) = x$. Morphisms of such pointed coverings are defined in the evident way. They are unique if they exist, in case the coverings are connected. The set of isomorphism classes of finite pointed Galois coverings of $(X,x)$ is a directed set $\Gh = \Gh (X,x)$ if $(Y_1 , y_1) \ge (Y_2 , y_2)$ means that there is a morphism $(Y_1 , y_1) \to (Y_2 , y_2)$, c.f. \cite{KS} section 2.

We have
\[
\uFFl_K (X) = \varinjlim_{(Y,y) \in \Gh} \uFl_K (X) (\pi_Y)
\]
in $\uFl_K (X)$ and therefore
\begin{equation}
\label{eq:26}
\pi^{\et}_K (X , x) = \varprojlim_{(Y,y) \in \Gh} \pi_K (X,x) (\pi_Y) \; .
\end{equation}
This is a projective limit of finite constant group schemes over $K$, since $\pi_K (X , x) (\pi)$ is finite constant by Proposition \ref{t31}. In particular the natural isomorphism of ``parallel transport along closed loops'' of Theorem \ref{t33}
\[
\rho : \pi^{\et}_1 (X,x) \silo \pi^{\et}_K (X,x) (K)
\]
can be viewed as an isomorphism of pro-finite-constant group schemes over $K$
\begin{equation}
\label{eq:27}
\pi^{\et}_1 (X,x) /_K \silo \pi^{\et}_K (X,x) \; .
\end{equation}
Since $\uFFl_K (X)$ is a full subcategory of $\uFl_K (X)$ which is closed under taking subobjects in $\uFl_K (X)$ by Corollary \ref{t23}, the induced morphism 
\[
\pi_K (X,x) \longrightarrow \pi^{\et}_K (X,x)
\]
is faithfully flat. It factors over the maximal pro-\'etale quotient
\[
\pi_K (X,x)^{\et} = \pi_0 (\pi_K (X,x))
\]
of $\pi_K (X,x)$. Thus we obtain a faithfully flat morphism of group schemes over $K$
\begin{equation}
\label{eq:28}
\pi_K (X,x)^{\et} \longrightarrow \pi^{\et}_K (X,x) \; .
\end{equation}

\begin{theorem}
\label{t36}
Let $(X,x)$ be a pointed connected topological space and $K$ a field. Then \eqref{eq:28} is an isomorphism
\begin{equation}
\label{eq:29}
\pi_K (X,x)^{\et} \silo \pi^{\et}_K (X,x) \; .
\end{equation}
Using the isomorphisms \eqref{eq:27} and \eqref{eq:29} we have an exact sequence
\begin{equation}
\label{eq:30}
1 \longrightarrow \pi_K (X,x)^0 \longrightarrow \pi_K (X,x) \longrightarrow \pi^{\et}_1 (X,x) /_K \longrightarrow 1 \; .
\end{equation}
Moreover, there is a natural isomorphism
\begin{equation}
\label{eq:31}
\pi_K (X,x)^0 = \varprojlim_{(Y,y) \in \Gh (X,x)} \pi_K (Y,y) \; .
\end{equation}
\end{theorem}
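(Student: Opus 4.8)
\emph{Overall strategy.} The three assertions are linked, so I would first isolate the two genuinely new inputs and then assemble the rest formally. Granting the isomorphism \eqref{eq:29} and the limit formula \eqref{eq:31}, the exact sequence \eqref{eq:30} is nothing but the canonical sequence $1 \to H^0 \to H \to H^{\et} \to 1$ applied to $H = \pi_K (X,x)$, with the quotient $H^{\et} = \pi_K (X,x)^{\et}$ rewritten first via \eqref{eq:29} as $\pi^{\et}_K (X,x)$ and then via \eqref{eq:27} as $\pi^{\et}_1 (X,x) /_K$, while $H^0 = \pi_K (X,x)^0$. So the plan is to prove \eqref{eq:29} and \eqref{eq:31} separately and let \eqref{eq:30} follow.

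\emph{Proof of \eqref{eq:29}.} I would argue entirely at the level of Tannakian subcategories of $\uFl_K (X) = \uRep_K (\pi_K (X,x))$. By Tannakian duality the maximal pro-\'etale quotient $\pi_0 (\pi_K (X,x)) = \pi_K (X,x)^{\et}$ corresponds to the full subcategory of those bundles $E$ whose monodromy representation $\pi_K (X,x) \to G_E$ factors through $\pi_0$, equivalently whose monodromy group $G_E \subset \GL_{E_x}$ is finite \'etale. The heart of the matter is to identify this subcategory with $\uFFl_K (X)$. For the easy inclusion, if $E \in \uFFl_K (X)$ then Propositions \ref{t34} and \ref{t31} show that $G_E$ is a finite constant, hence \'etale, group scheme. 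For the converse, if $G_E$ is finite I would apply Theorem \ref{t26n} to the finite quotient $G_K = G_E$: it produces a connected $G_E (K)$-torsor $P (G_E) \to X$, i.e.\ a finite Galois covering, and Theorem \ref{t24n}, 1) shows $\pi^* E$ is trivial on it, so $E \in \uFFl_K (X)$. Thus the two full subcategories of $\uFl_K (X)$ coincide and carry the same restriction of $\omega_x$; consequently the faithfully flat morphism \eqref{eq:28} induces an equivalence of representation categories and is therefore an isomorphism, giving \eqref{eq:29}.

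\emph{Proof of \eqref{eq:31}.} The key input is Proposition \ref{t32}: for each pointed finite Galois covering $(Y,y) \in \Gh (X,x)$ with group $G_Y$, the closed immersion $i_Y$ identifies $\pi_K (Y,y)$ with $\ker \bigl( \pi_K (X,x) \xrightarrow{p} G_Y /_K \bigr)$, where $G_Y /_K = \pi_K (X,x)(\pi_Y)$. For a morphism $(Y,y) \to (Y',y')$ in $\Gh (X,x)$ the induced map $\pi_K (Y,y) \to \pi_K (Y',y')$ is compatible with the two closed immersions into $\pi_K (X,x)$, hence is itself a closed immersion, so $\{ \pi_K (Y,y) \}$ is an inverse system of closed subgroup schemes of $\pi_K (X,x)$. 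Passing to the projective limit over $\Gh (X,x)$ of the exact sequences of Proposition \ref{t32}, using left exactness of $\varprojlim$ and the identification \eqref{eq:26} that $\pi^{\et}_K (X,x) = \varprojlim_{(Y,y)} (G_Y /_K)$, I obtain
\[
\varprojlim_{(Y,y)} \pi_K (Y,y) = \ker\Bigl( \pi_K (X,x) \longrightarrow \varprojlim_{(Y,y)} (G_Y /_K) \Bigr) = \ker\bigl( \pi_K (X,x) \to \pi^{\et}_K (X,x) \bigr) .
\]
By \eqref{eq:29} the right-hand kernel is $\ker ( \pi_K (X,x) \to \pi_K (X,x)^{\et} ) = \pi_K (X,x)^0$, which is \eqref{eq:31}.

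\emph{Where the difficulty lies.} The crux is the identification \eqref{eq:29}, i.e.\ matching the Tannakian \'etale quotient $\pi^{\et}_K (X,x)$, built from finite-covering-trivializable bundles, with the intrinsically defined $\pi_0 (\pi_K (X,x))$, built from the maximal separable Hopf subalgebra. This rests on the equivalence ``finite monodromy $\Leftrightarrow$ trivialized by a finite covering,'' whose nontrivial direction is the passage from an abstract finite monodromy group to a genuine finite Galois covering through the torsor construction of Theorem \ref{t26n}; happily that construction is already available. In the limit argument for \eqref{eq:31} the only delicate point is that $\varprojlim$ of the surjections $p$ need not stay surjective, but this is irrelevant to the kernel computation, which uses only left exactness; surjectivity onto $\pi^{\et}_K (X,x)$ is in any case guaranteed by the faithful flatness of \eqref{eq:28}.
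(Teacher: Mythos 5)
Your proposal is correct and follows essentially the same route as the paper: both prove \eqref{eq:29} by showing the two sides have the same algebraic representations (via the constancy of finite \'etale quotients from Theorem \ref{t25n}, 3)/Theorem \ref{t26n}, the equivalence ``finite constant monodromy $\Leftrightarrow$ trivialized by a finite Galois covering'' from Proposition \ref{t34}, and \eqref{eq:26}), deduce \eqref{eq:30} formally from the connected--\'etale sequence, and obtain \eqref{eq:31} by passing to the limit in the exact sequences of Proposition \ref{t32} and comparing kernels. Your treatment is just a more detailed unwinding of the paper's terse argument, including the (correct) remark that only left exactness of $\varprojlim$ is needed for the kernel identification.
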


\begin{proof}
Any finite \'etale quotient of $\pi_K (X,x)$ is a constant group scheme by Theorem \ref{t25n}, 3) or Theorem \ref{t26n}. Using \eqref{eq:26} and Proposition \ref{t34} it follows that \eqref{eq:28} is an isomorphism since both sides have the same algebraic representations. Hence we obtain \eqref{eq:29} and the exact sequence \eqref{eq:30}. Passing to the limit in the exact sequence of Proposition \ref{t32} and using \eqref{eq:26} and \eqref{eq:27} we obtain an exact sequence
\begin{equation}
\label{eq:34}
1 \longrightarrow \varprojlim_{(Y,y) \in \Gh} \pi_K (Y,y) \longrightarrow \pi_K (X,x) \longrightarrow \pi^{\et}_1 (X,x) /_K \; .
\end{equation}
Comparing \eqref{eq:34} and \eqref{eq:30} the isomorphism \eqref{eq:31} follows. 
\end{proof}

The proof of Theorem \ref{t36} only needed the special case of Theorem \ref{t25n}, 3) or Theorem \ref{t26n} that all finite {\it \'etale} quotients of $\pi_K (X,x)$ are constant. This can also be shown using the following proposition which is of indepenent interest. 

\begin{prop}
\label{t37}
Let $X$ be a connected topological space and $L / K$ a field extension. Let $E$ be a flat vector bundle in $\uFl_K (X)$ such that $E \otimes_K L$ is a trivial bundle in $\uFl_L (X)$. Then $E$ is a trivial bundle in $\uFl_K (X)$.
\end{prop}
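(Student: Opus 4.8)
\section*{Proof proposal for Proposition \ref{t37}}

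The plan is to reduce the statement to a dimension count for spaces of global sections. The key observation is that, since $X$ is connected, evaluation at the base point gives an injection $\Gamma(X,F)\hookrightarrow F_x$ for every flat bundle $F$ in $\uFl_K(X)$: a global section is constant in each connected trivializing chart, so its vanishing locus is open and closed, whence a section vanishing at $x$ vanishes on all of $X$. In particular $\dim_K\Gamma(X,F)\le\rank F$, and for a bundle $F$ of rank $r$ one has $\dim_K\Gamma(X,F)=r$ if and only if $F$ is trivial. Indeed, if $s_1,\dots,s_r$ is a $K$-basis of $\Gamma(X,F)$, then the $s_i(x)$ are linearly independent in $F_x$, hence a basis, and the resulting morphism $\uK^r\to F$ is an isomorphism at $x$ and therefore everywhere, its fibrewise rank being locally constant and hence constant. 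Thus it suffices to prove that $\dim_K\Gamma(X,E)=r$, where $r=\rank E$; since $E\otimes_K L$ is trivial we already know $\dim_L\Gamma(X,E\otimes_K L)=r$.

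Next I would compare the two section spaces through the natural $L$-linear base change map
\[
\iota:\Gamma(X,E)\otimes_K L\longrightarrow\Gamma(X,E\otimes_K L),\qquad s\otimes\lambda\longmapsto\lambda\,(s\otimes 1),
\]
and show that it is an isomorphism. Injectivity is automatic from flatness of $L/K$ together with the embedding of the first paragraph, since the composite of $\iota$ with $\Gamma(X,E\otimes_K L)\hookrightarrow E_x\otimes_K L$ is the injection $\Gamma(X,E)\otimes_K L\hookrightarrow E_x\otimes_K L$. For surjectivity I would fix a $K$-basis $(e_\alpha)_{\alpha}$ of $L$ and let $p_\alpha:L\to K$ be the associated coordinate functionals. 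Applying $\mathrm{id}_E\otimes p_\alpha$ to a global section $t\in\Gamma(X,E\otimes_K L)$ produces global sections $t_\alpha:=(\mathrm{id}_E\otimes p_\alpha)(t)\in\Gamma(X,E)$, and fibrewise one has $t(y)=\sum_\alpha t_\alpha(y)\,e_\alpha$ for every $y\in X$. If this sum were globally finite we would conclude $t=\iota\bigl(\sum_\alpha t_\alpha\otimes e_\alpha\bigr)$ and be done.

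The main obstacle is exactly the global finiteness of this expansion: because $X$ is only assumed connected, not locally connected or compact, one cannot harvest finiteness from a local or compactness argument, and a priori infinitely many $e_\alpha$ could occur as $y$ varies. This is resolved by the injectivity of evaluation at $x$: for each $\alpha$ we have $t_\alpha=0$ if and only if $t_\alpha(x)=0$, and since $t(x)\in E_x\otimes_K L$ is a single element of a tensor product, only finitely many $t_\alpha(x)$ are nonzero. Hence only finitely many $t_\alpha$ are nonzero, the expansion is finite, and $\iota$ is surjective. Consequently $\dim_K\Gamma(X,E)=\dim_L\Gamma(X,E\otimes_K L)=r$, and by the criterion of the first paragraph $E$ is a trivial bundle in $\uFl_K(X)$.
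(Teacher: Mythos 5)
Your proof is correct, and it takes a genuinely different route from the paper's. The paper argues with cocycles: it picks transition functions $g_{ij} \colon U_i \cap U_j \to \GL_r(K)$ for $E$ and locally constant maps $l_i \colon U_i \to \GL_r(L)$ with $g_{ij} = l_i^{-1} l_j$ (triviality over $L$), observes that the induced maps $U_i \to \GL_r(L)/\GL_r(K)$ agree on overlaps and hence glue to a locally constant map on $X$, which is constant by connectedness, say with value $a\,\GL_r(K)$; writing $l_i = a g_i$ with $g_i$ valued in $\GL_r(K)$ then exhibits $g_{ij} = g_i^{-1} g_j$ as a coboundary over $K$. You instead work with global sections: your key lemma is that evaluation at the base point embeds $\Gamma(X,F)$ into $F_x$ for every flat bundle $F$ on connected $X$, which gives both the triviality criterion $\dim_K \Gamma(X,F) = \rank F$ and, via the finiteness argument with the basis $(e_\alpha)$ of $L$ over $K$, the base-change isomorphism $\Gamma(X,E)\otimes_K L \cong \Gamma(X,E\otimes_K L)$. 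Your resolution of the only delicate point --- that a priori infinitely many coordinate sections $t_\alpha$ could be nonzero as $y$ varies --- is exactly right: injectivity of evaluation at $x$ reduces global nonvanishing to nonvanishing at the single point $x$, where finiteness is automatic. Both proofs use connectedness in essentially the same way (a locally constant datum on $X$ is constant; equivalently, a section vanishing at one point vanishes identically), so neither is more general, but they buy different things: your route establishes the independently useful fact that formation of global sections of flat bundles on connected spaces commutes with arbitrary extension of the ground field, whereas the paper's route is shorter and stays in the \v{C}ech/cocycle language of $H^1(X, \uGL_r(K))$ that the paper uses elsewhere (e.g.\ in Proposition \ref{t44}).
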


\begin{proof}
By assumption there exists an open cover $\eU = (U_i)_{i \in I}$, a representing cocycle $g = (g_{ij})$ of locally constant maps $g_{ij} : U_i \cap U_j \to \GL_r (K)$ for the isomorphism class of $E$ and locally constant maps
\[
l_i : U_i \longrightarrow \GL_r (L) \quad \text{for} \; i \in I
\]
such that
\[
g_{ij} = l^{-1}_i l_j \quad \text{on} \; U_i \cap U_j \; \text{for} \; i , j \in I \; .
\]
Consider the composition
\[
\ol_i : U_i \xrightarrow{l_i} \GL_r (L) \xrightarrow{\text{---}} \GL_r (L) / \GL_r (K) \; .
\]
We have
\[
\ol_i = \overline{l_i g_{ij}} = \ol_j \quad \text{on} \; U_i \cap U_j \; .
\]
Thus the maps $\ol_i$ glue to a locally constant map
\[
\ol : X \longrightarrow \GL_r (L) / \GL_r (K) \; .
\]
Since $X$ is connected, $\ol$ is constant, $\ol = a \GL_r (K)$ for a matrix $a \in \GL_r (L)$. Thus we have $l_i = ag_i$ on $U_i$ for $i \in I$ with locally constant maps
\[
g_i : U_i \longrightarrow \GL_r (K) \; .
\]
This implies that
\[
g_{ij} = (ag_i)^{-1} (ag_j) = g^{-1}_i g_j \; .
\]
Hence $E$ is isomorphic to a trivial bundle in $\uFl_K (X)$.
\end{proof}

The required special case of Theorem \ref{t25n}, 3) or Theorem \ref{t26n} needed for the proof of Theorem \ref{t36} is the following assertion for which we give a direct proof. 

\begin{prop}
\label{t38}
Let $(X,x)$ be a pointed connected topological space and $K$ a field. Let $E$ in $\uFl_K (X)$ be a flat vector bundle whose monodromy group scheme $G_E = G_{E,x}$ is finite \'etale. Then $G_E$ is a constant group over $K$.
\end{prop}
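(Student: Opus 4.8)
The plan is to reduce to the case of finite \emph{constant} monodromy---already handled in the text---by splitting $G_E$ over an extension field and then descending a trivialization back to $K$ with Proposition~\ref{t37}. The target is a finite Galois covering $\pi\colon Y\to X$, defined purely topologically, such that $\pi^{*}E$ is trivial \emph{over the given field $K$}; once this is in hand, Proposition~\ref{t34} applies directly and gives $G_E=G_E(K)/_K$, so that $G_E$ is constant.

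First I would pass to the algebraic closure $\oK$ and consider $E_{\oK}:=E\otimes_K\oK$ in $\uFl_{\oK}(X)$. Its monodromy group scheme is contained in $G_E\times_K\oK$, since the monodromy representation of $E_{\oK}$ factors through the base change to $\oK$ of that of $E$; hence it is finite, and being finite \'etale over the algebraically closed field $\oK$ it is automatically constant. Applying Theorem~\ref{t26n} over $\oK$ to this constant quotient of $\pi_{\oK}(X,x)$ produces a connected finite Galois covering $\pi\colon Y\to X$, and by Theorem~\ref{t24n} the pullback $\pi^{*}E_{\oK}$ is trivial in $\uFl_{\oK}(Y)$.

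The covering $\pi\colon Y\to X$ is a topological object, independent of the coefficient field. I would then pull back the \emph{original} bundle $E$ and note that $\pi^{*}E\otimes_K\oK=\pi^{*}(E_{\oK})$ is trivial in $\uFl_{\oK}(Y)$ while $Y$ is connected. Proposition~\ref{t37}, applied to the connected space $Y$ and the extension $\oK/K$, then forces $\pi^{*}E$ to be trivial already in $\uFl_K(Y)$. Thus $E$ is trivialized over $K$ by the finite Galois covering $\pi$, and Proposition~\ref{t34} yields the claim.

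The hard part is the construction of the \emph{connected} covering in the second step: over an $X$ that is not locally connected a trivializing torsor may fail to have surjective connected components (cf.\ the solenoid remark), so connectedness is genuinely non-formal and rests on the computation $H^{0}(Y,\underline{\oK})=\oK$ from the proof of Theorem~\ref{t26n}. It is worth emphasizing that invoking Theorem~\ref{t26n} here is not circular: it is used only over $\oK$, where every finite \'etale group scheme is already constant, so the assertion being proved---that finiteness and \'etaleness force constancy over an \emph{arbitrary} base field---is not presupposed. The only other point needing care is the base-change compatibility giving the containment of the monodromy of $E_{\oK}$ in $G_E\times_K\oK$; a finite-extension variant, splitting $G_E$ over a finite Galois $L/K$ and using that a closed subgroup scheme of a finite constant group scheme is again constant, avoids the algebraic closure but needs this elementary remark in its place.
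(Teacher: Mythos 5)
Your proof is correct and follows essentially the same route as the paper's own: the same base-change containment $G_{E\otimes_K L}\subset G_E\otimes_K L$, a splitting extension ($\oK$ where the paper takes $L=K^{\sep}$), a connected finite Galois covering trivializing the extended bundle, descent of triviality via Proposition~\ref{t37}, and the conclusion via Proposition~\ref{t34}. The only cosmetic difference is that you invoke Theorems~\ref{t26n} and~\ref{t24n} directly where the paper cites the corresponding direction of Proposition~\ref{t34}, whose proof consists exactly of those two theorems.
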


\begin{proof}
For a field extension $L / K$, write $\omega^L_x$ for the fibre functor $F \mapsto F_x$ on $\uFl_L (X)$. Recall the functor $\phi_L : \uVec_K \to \uVec_L , V \mapsto V \otimes_K L$. The $\otimes$-functor 
\[
\uFl_K (X) \to \uFl_L (X) , E \mapsto E \otimes_K L
\]
is compatible with the fibre functors $\phi_L \verk \omega_x$ on the left and $\omega^L_x$ on the right. Hence we get a morphism of group schemes over $L$
\begin{equation}
\label{eq:32}
\pi_L (X, x) = \uAut^{\otimes} (\omega^L_x) \longrightarrow \uAut^{\otimes} (\phi_L \verk \omega_x) = \uAut^{\otimes} (\omega_x) \otimes_K L = \pi_K (X,x) \otimes_K L \; .
\end{equation}
Let $\rho : \pi_K (X,x) \twoheadrightarrow G_E \subset \GL_{E_x}$ be the representation corresponding to $E$. Then the composition
\[
\rho_L : \pi_L (X,x) \longrightarrow \pi_K (X,x) \otimes_K L \overset{\rho \otimes L}{\twoheadrightarrow} G_E \otimes_K L \subset \GL_{E_x \otimes_K L}
\]
is the representation corresponding to $E \otimes_K L$ in $\uFl_L (X)$. Hence we have a closed immersion
\begin{equation}
\label{eq:33}
G_{E \otimes_K L} \subset G_E \otimes_K L
\end{equation}
of closed subgroup schemes of $\GL_{E_x \otimes_K L}$. By assumption there is a field extension $L / K$ e.g. $L = K^{\sep}$, such that $G_E \otimes_K L$ is constant. Because of \eqref{eq:33} the affine group $G_{E \otimes_K L}$ is therefore constant as well. Hence, by Proposition \ref{t34} there is a finite Galois covering $\pi : Y \to X$ such that $\pi^* (E \otimes_K L) = (\pi^* E) \otimes_K L$ is a trivial bundle in $\uFl_L (Y)$. Proposition \ref{t37} now implies that $\pi^* E$ is trivial in $\uFl_K (Y)$. Invoking Proposition \ref{t34} again, it follows that $G_E$ is constant over $K$.
\end{proof}
\section{Calculations of $\pi_K (X,x)$} \label{sec:4}
Given an abstract group $\Gamma$ the proalgebraic completion of $\Gamma$ over the field $K$ is a pair consisting of an affine group scheme $\Gamma^{\alg} = \Gamma^{\alg}_K$ over $K$ and a homomorphism of groups $i : \Gamma \to \Gamma^{\alg} (K)$ with Zariski dense image. It is defined up to unique automorphism by the following universal property: For any representation $\rho : \Gamma \to \GL(V)$ on 
a finite dimensional $K$-vector space $V$ there is a unique algebraic representation $\rho^{\alg} : \Gamma^{\alg} \to \GL_V$ with $\rho = \rho^{\alg} (K) \verk i$. One can obtain $\Gamma^{\alg}$ as the Tannakian dual of the neutral Tannakian category $\Rep_K (\Gamma)$ of finite dimensional $K$-representations of $\Gamma$ with respect to the fibre functor of forgetting the $\Gamma$-action. A concrete description of the Hopf-algebra $A$ over $K$ with $\Gamma^{\alg} = \spec A$ is the following. The group $\Gamma$ acts on the $K$-algebra of function $f : \Gamma \to K$ by right and left translation. The algebra $A$ consists of all functions whose left (equiv. right) $\Gamma$-orbits generate finite-dimensional $K$-vector spaces. The comultiplication, co-inverse and co-unit are obtained by composing with the multiplication, inverse and unit maps for $\Gamma$. In particular $\Gamma^{\alg}$ is always reduced. See \cite[\S\,2]{BL} for more information on the proalgebraic completion.

The proalgebraic completion of $\Gamma = \Z$ over an algebraically closed field of characteristic zero is well known: A representation of $\Z$ on a finite-dimensional vector space $V$ is given by an automorphism $\varphi$ of $V$. We may decompose $\varphi$ uniquely as a product $\varphi = \varphi_s \varphi_u$ where $\varphi_s$ is semisimple and $\varphi_u$ is unipotent with $\varphi_u \varphi_s = \varphi_s \varphi_u$. Unipotent automorphisms correspond to representations of $\Ge_{a,K}$. The automorphism $\varphi_s$ is determined up to conjugacy by its eigenvalues. Let $D$ be the diagonalizable group over $K$ corresponding to the abstract group $K^{\times}$i.e. $D = \spec K [K^{\times}]$ with co-multiplication
\[
 \Delta : K [K^{\times}] \longrightarrow K [K^{\times}] \otimes K [K^{\times}]
\]
being given by $\Delta (x) = x \otimes x$ for $x \in K^{\times}$. We have
\begin{equation} \label{eq:35}
 \Z^{\alg} = \Ge_a \times D
\end{equation}
as proalgebraic groups over $K$. Using the identification
\[
 D (K) = \Hom (K^{\times} , K^{\times})
\]
the map
\[
 \Z \longrightarrow \Z^{\alg} (K) = K \times \Hom (K^{\times} , K^{\times})
\]
sends $n$ to $(n , a \mapsto a^n)$.

The exact sequence, where $\mu_K$ are the roots of unity in $K^{\times}$,
\[
 1 \longrightarrow \mu_K \longrightarrow K^{\times} \longrightarrow K^{\times} / \mu_K \longrightarrow 1
\]
corresponds to the exact sequence of commutative group schemes over $K$
\[
 1 \longrightarrow D^0 \longrightarrow D \longrightarrow D^{\et} \longrightarrow 1 \; .
\]
Here $D^0 = \spec K [K^{\times} / \mu_K]$ is a pro-torus with character group $K^{\times} / \mu_K$ and $D^{\et} = \hZ /_K$ is the profinite completion of $\Z$ viewed as a pro-\'etale group scheme. Hence the connected \'etale sequence for $\Z^{\alg}_K$ reads as follows:
\[
 0 \longrightarrow \Ge_a \times D^0 \longrightarrow \Z^{\alg}_K \longrightarrow \hZ /_K \longrightarrow 0 \; .
\]
A similar description of $\Gamma^{\alg}$ can be given for any finitely generated abelian group $\Gamma$ instead of $\Z$.

For a connected topological space $X$ and a point $x \in X$ parallel transport along homotopy classes of paths in \eqref{eq:6} gave a homomorphism
\[
i : \pi_1 (X,x) \longrightarrow \pi_K (X,x) (K) \; .
\]

\begin{theorem}
\label{t41}
Let $X$ be a path-connected, locally path-connected and semi-locally simply connected space. Then for any $x \in X$ and any field $K$, the map $i$ induces an isomorphism of affine group schemes over $K$
\[
i^{\alg} : \pi_1 (X,x)^{\alg} \silo \pi_K (X,x) \; .
\]
The image under $i$ of $\pi_1 (X,x)$ is Zariski dense in $\pi_K (X,x)$. The maps $i$ and $i^{\alg}$ are functorial with respect to continuous maps of pointed spaces. The pseudo-torsor $P_X$ of \eqref{eq:25n} and the universal covering $\tX$ of $X$ are related by an isomorphism
\[
P_X = \tX \times^{\pi_1 (X,x)} \pi_K (X,x) (K) \; .
\]
In particular, $P_X$ is a torsor for the pro-discrete group $\pi_K (X,x) (K)$.
\end{theorem}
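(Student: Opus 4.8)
The plan is to deduce the theorem from the classical monodromy correspondence, which is available under the stated hypotheses, followed by Tannakian duality. Write $\pi_1 := \pi_1(X,x)$. The heart of the matter is the claim that the \emph{monodromy functor}
\[
\mu : \uFl_K (X) \longrightarrow \uRep_K (\pi_1) \, , \quad E \longmapsto E_x \; \text{with the $\pi_1$-action by parallel transport along loops at $x$},
\]
is an equivalence of rigid abelian tensor categories carrying $\omega_x$ to the forgetful fibre functor. Here $\uRep_K(\pi_1)$ denotes the finite-dimensional $K$-representations of the discrete group $\pi_1$, whose Tannakian dual is $\pi_1^{\alg}$ by the description recalled at the beginning of this section. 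Granting that $\mu$ is such an equivalence, whiskering the automorphisms of the forgetful functor by $\mu$ produces, via \cite{DM} Theorem 2.11, an isomorphism of affine $K$-group schemes $\pi_1^{\alg} = \uAut^{\otimes}(\text{forget}) \silo \uAut^{\otimes}(\omega_x) = \pi_K(X,x)$. On $K$-points this sends $\gamma \in \pi_1$, acting on each representation, to the parallel transport of $\gamma$ on each $E_x$, which is exactly $i(\gamma)$ by the definition \eqref{eq:6}; hence the isomorphism is $i^{\alg}$.

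To prove that $\mu$ is an equivalence I would use the universal covering. Under the three hypotheses $X$ admits a universal covering $\pi : \tX \to X$, which is a connected $\pi_1$-torsor in the sense of this paper, i.e. a Galois covering with group $\pi_1$. Applying the equivalence \eqref{eq:4} with $G = \pi_1$ and $P = \tX$ gives a tensor equivalence $\uFl_K(X) \simeq \uFl^{\pi_1}_K(\tX)$. Since $\tX$ is simply connected and locally path-connected, every flat bundle on $\tX$ is trivial; combining this with the description of morphisms between trivial bundles on a connected covering from the proof of Proposition \ref{t22} and the equivariance relation \eqref{eq:34n}, a $\pi_1$-equivariant flat bundle on $\tX$ is the same datum as a finite-dimensional representation of $\pi_1$. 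Thus $\uFl^{\pi_1}_K(\tX) \simeq \uRep_K(\pi_1)$, and after fixing a lift $\tx$ of $x$ the fibre functor $\omega_x$ becomes the forgetful functor, exactly as in Proposition \ref{t31}, 2). The composite of these two equivalences is $\mu$. The Zariski density of $i(\pi_1)$ is then formal, since the canonical map $\pi_1 \to \pi_1^{\alg}(K)$ has Zariski dense image and $i$ factors through it via $i^{\alg}(K)$; functoriality of $i$ and $i^{\alg}$ in pointed spaces is inherited from that of parallel transport and of the universal covering.

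It remains to identify $P_X$. Under $\mu$ the defining algebra object $A$ of \eqref{eq:20n}, \eqref{eq:25n}, which corresponds to the coordinate Hopf algebra $\Gamma(\pi_K(X,x))$ with its left regular action, corresponds to the associated algebra bundle $\tX \times^{\pi_1} \Gamma(\pi_K(X,x))$; equivalently its pullback $\pi^* A$ is the constant algebra bundle $\tX \times \Gamma(\pi_K(X,x))$. Because the $\pi_1$-action is by algebra automorphisms, applying $\uHom_{\alg}(\,\cdot\,,\uK)$ fibrewise commutes with the associated-bundle construction, so by \eqref{eq:21n}
\[
P_X = \uHom_{\alg}(A,\uK) = \tX \times^{\pi_1} \Hom_{\alg}(\Gamma(\pi_K(X,x)),K) = \tX \times^{\pi_1(X,x)} \pi_K(X,x)(K) \; ,
\]
where $\pi_1$ acts on $\pi_K(X,x)(K) = \pi_1^{\alg}(K)$ by left translation through $i$ (the action dual to the left regular action on $\Gamma(\pi_K(X,x))$). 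Since $\pi : \tX \to X$ is a genuine covering it has local sections, so over a trivializing open $U$ one has $\tX|_U \cong U \times \pi_1$ and hence $P_X|_U \cong U \times \pi_K(X,x)(K)$; therefore $P_X$ is locally trivial, i.e. an honest torsor for the pro-discrete group $\pi_K(X,x)(K)$.

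The step I expect to require the most care is not any single construction but the coherence of all the identifications: one must check that the $\pi_1$-action arising from monodromy in $\mu$ is literally the parallel transport defining $i$ (so that the Tannakian isomorphism is $i^{\alg}$ rather than some twist), and that the left regular $\pi_1$-action on $\Gamma(\pi_K(X,x))$, transported to $\pi_K(X,x)(K)$, is precisely left translation through $i$. Both hinge on tracking the base-point dependent identifications $E_x \cong V$ and $\pi^{-1}(x) \cong \pi_1$ induced by the chosen lift $\tx$; once these are fixed compatibly, the rest is the classical covering-space dictionary together with \cite{DM}.
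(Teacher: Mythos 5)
Your proposal is correct and follows essentially the same route as the paper: the paper's (very terse) proof consists precisely of invoking the classical $\otimes$-equivalence $\uFl_K(X) \simeq \uRep_K(\pi_1(X,x))$ for the first assertions and then "going through the construction of $P_X$" for the torsor formula, which is exactly what you do, with the details filled in. Your derivation of the equivalence via the universal covering and \eqref{eq:4}, and your fibrewise identification $P_X = \tX \times^{\pi_1(X,x)} \pi_K(X,x)(K)$, are the intended arguments (the universal-covering route is even echoed in the paper's discussion following Proposition \ref{t42}).
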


\begin{proof}
For the spaces $X$ in question, $\uFl_K (X)$ is $\otimes$-equivalent to $\uRep_K (\pi_1 (X,x))$ which implies the first assertions. The formula for $P_X$ and hence the fact that it is a torsor follows by going through the construction of $P_X$.
\end{proof}

For an arbitrary connected topological space $X$ one obtains quotients of $\pi_K (X,x)$ as follows. Assume that a discrete group $\Gamma$ acts by homeomorphisms on a connected topological space $Y$ such that $X = Y / \Gamma$ and such that every point $y \in Y$ has a neighborhood $U$ with $U \cap U \gamma = \emptyset$ for all $\gamma \neq e$. The proof of Proposition \ref{t31} applies also in this more general situation and gives an equivalence of categories
\[
\uRep_K (\Gamma) \silo \uFl_K (X) (\pi) \; , \; V \mapsto \pi^{\Gamma}_* (Y \times V) = Y \times^{\Gamma} V \; .
\]
Here $\pi : Y \to X$ is the projection. As before, by Corollary \ref{t23} the full subcategory $\uFl (X) (\pi)$ is closed under taking subobjects in $\uFl_K (X)$. The choice of a point $y \in Y$ over $x$ gives an identification of $V$ with $(Y \times^{\Gamma} V)_x$. Hence we get the following result:

\begin{prop}
\label{t42}
The preceding construction gives a faithfully flat morphism of affine group schemes over $K$ depending on $y \in Y$
\[
\pi_K (X,x) \twoheadrightarrow \Gamma^{\alg} \; .
\]
\end{prop}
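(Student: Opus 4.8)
The plan is to obtain this morphism as an instance of the faithful-flatness criterion for Tannakian duals, following verbatim the construction of the faithfully flat map $p$ that precedes Proposition \ref{t32}, with the finite group $G$ there replaced by the discrete group $\Gamma$ and the constant group scheme $G/_K$ replaced by $\Gamma^{\alg}$. Concretely, the source $\pi_K(X,x) = \uAut^{\otimes}(\omega_x)$ should be mapped to the Tannakian dual of the full subcategory $\uFl_K(X)(\pi)$, which I will identify with $\Gamma^{\alg}$.

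First I would identify that Tannakian dual with $\Gamma^{\alg}$. Write $\omega^{\pi}_x$ for the restriction of $\omega_x$ to $\uFl_K(X)(\pi)$, and let $\omega_{\Gamma}$ denote the forgetful fibre functor on $\uRep_K(\Gamma)$, so that by the description recalled at the start of this section $\Gamma^{\alg} = \uAut^{\otimes}(\omega_{\Gamma})$. The construction preceding the proposition furnishes a tensor equivalence $\uRep_K(\Gamma) \silo \uFl_K(X)(\pi)$, $V \mapsto Y \times^{\Gamma} V$, and the chosen point $y$ over $x$ gives the identification $(Y \times^{\Gamma} V)_x \silo V$, $[y,v] \mapsto v$. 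I would check that this identification is a tensor isomorphism between $\omega_{\Gamma}$ and the composite of the equivalence with $\omega^{\pi}_x$; it is visibly $K$-linear and respects $\otimes$, the unit $\uK$, and duals. By functoriality of the Tannakian dual under a tensor equivalence compatible with fibre functors, this yields an isomorphism of affine $K$-group schemes $\uAut^{\otimes}(\omega^{\pi}_x) \silo \Gamma^{\alg}$, which depends on $y$ and accounts for the stated dependence.

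Next I would produce the morphism itself. Since $\omega^{\pi}_x = \omega_x \verk (\text{inclusion})$, restriction of automorphisms along $\uFl_K(X)(\pi) \hookrightarrow \uFl_K(X)$ gives a canonical morphism $\pi_K(X,x) = \uAut^{\otimes}(\omega_x) \to \uAut^{\otimes}(\omega^{\pi}_x)$. By Corollary \ref{t23} the subcategory $\uFl_K(X)(\pi)$ is full in $\uFl_K(X)$ and closed under subobjects, so \cite{DM} Proposition 2.21 shows this morphism is faithfully flat. Composing with the isomorphism $\uAut^{\otimes}(\omega^{\pi}_x) \silo \Gamma^{\alg}$ from the previous step produces the desired faithfully flat $\pi_K(X,x) \twoheadrightarrow \Gamma^{\alg}$.

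The only point demanding care — and the sole nonroutine input — is verifying that the hypotheses of Proposition \ref{t31} genuinely transfer to the present, merely properly discontinuous situation: that $\pi : Y \to X$ is a $\Gamma$-torsor, so that both the equivalence $\uRep_K(\Gamma) \simeq \uFl_K(X)(\pi)$ and the fibre identification $(Y \times^{\Gamma} V)_x \cong V$ are available, and that the latter is tensor-compatible. The freeness hypothesis $U \cap U\gamma = \emptyset$ for $\gamma \neq e$ makes the action free and properly discontinuous and forces $\Gamma$ to permute each fibre simply transitively, which is exactly what the proof of Proposition \ref{t31} exploits; once this is in place, the remaining assertions are formal consequences of the same Tannakian machinery already used in the finite-covering case, and no genuine obstacle remains.
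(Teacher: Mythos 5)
Your proposal is correct and takes essentially the same route as the paper, whose ``proof'' of Proposition \ref{t42} is exactly the preceding construction: the equivalence $\uRep_K(\Gamma) \silo \uFl_K(X)(\pi)$ obtained by rerunning the proof of Proposition \ref{t31} for the (possibly infinite) discrete group $\Gamma$, closure of $\uFl_K(X)(\pi)$ under subobjects via Corollary \ref{t23} together with exactness of $\pi^*$, faithful flatness from \cite{DM} Proposition 2.21, and the point $y$ over $x$ identifying the Tannakian dual of $(\uFl_K(X)(\pi),\omega_x)$ with $\Gamma^{\alg}$. Your closing paragraph isolating the torsor property (from the hypothesis $U \cap U\gamma = \emptyset$ for $\gamma \neq e$) as the one nonroutine input is precisely the point the paper compresses into ``the proof of Proposition \ref{t31} applies also in this more general situation.''
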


One can define the projective limit
\[
\varprojlim_{(Y,y)} \Gamma^{\alg} = \varprojlim_{(Y,y)} \Aut (Y / X)^{\alg} \; .
\]
I do not know when the resulting faithfully flat morphism of affine groups over $K$
\begin{equation}
\label{eq:36}
\pi_K (X,x) \twoheadrightarrow \varprojlim_{(Y,y)} \Gamma^{\alg}
\end{equation}
is also a closed immersion and therefore an isomorphism. This is the case if and only if every flat bundle $E$ on $X$ is a subquotient of a bundle of the form $Y \times^{\Gamma} V$ above. This is true for locally connected topological spaces $X$ by Proposition \ref{t22}, b). In general there is a $\Gamma$-torsor $Y$ with $E = Y \times^{\Gamma} V$ by Theorems \ref{t24n} and \ref{t25n} but it may be disconnected. For the topological spaces in Theorem \ref{t41} the condition is satisfied and \eqref{eq:36} is an isomorphism. In fact, the universal covering of $X$ dominates all other coverings and we see again that there is an isomorphism
\[
\pi_K (X,x) \silo \pi_1 (X,x)^{\alg} \; .
\]
We can calculate $\pi_K (X,x)$ for some solenoids using the following continuity property.

\begin{theorem}
\label{t43}
Let $\Lambda$ be a directed partially ordered set and $(X_{\lambda} , p_{\lambda \mu})$ a projective system indexed by $\Lambda$ of compact connected Hausdorff spaces and continuous maps. Fix a point $x$ of the compact connected Hausdorff space $X = \varprojlim_{\lambda \in \Lambda} X_{\lambda}$ and set $x_{\lambda} = p_{\lambda} (x)$ where $p_{\lambda} : X \to X_{\lambda}$ is the projection map. Let $K$ be a field. Then the morphisms of group schemes over $K$
\[
p_{\lambda*} : \pi_K (X,x) \longrightarrow \pi_K (X_{\lambda} , x_{\lambda}) \quad \text{for} \; \lambda \in \Lambda
\]
induce an isomorphism
\[
\pi_K (X,x) \silo \varprojlim_{\lambda \in \Lambda} \pi_K (X_{\lambda} , x_{\lambda}) \; .
\]
\end{theorem}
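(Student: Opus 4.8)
The plan is to dualise and reduce everything to a statement about the categories of flat bundles. By Tannakian duality (\cite{DM}, Theorem 2.11) together with the standard identity $\uRep_K(\varprojlim_\lambda G_\lambda)=\varinjlim_\lambda\uRep_K(G_\lambda)$ for a cofiltered system of affine group schemes, the morphism $\pi_K(X,x)\to\varprojlim_\lambda\pi_K(X_\lambda,x_\lambda)$ is an isomorphism if and only if the pullback functor
\[
\Phi:\varinjlim_\lambda\uFl_K(X_\lambda)\longrightarrow\uFl_K(X),\qquad E_\lambda\longmapsto p_\lambda^*E_\lambda,
\]
is an equivalence of tensor categories. Its compatibility with the fibre functors is immediate, since $\omega_x\verk p_\lambda^*=\omega_{x_\lambda}$ because $p_\lambda(x)=x_\lambda$. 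Thus the whole theorem comes down to showing that $\Phi$ is fully faithful and essentially surjective.

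The two topological inputs I would use throughout are consequences of $X=\varprojlim_\lambda X_\lambda$ being a cofiltered limit of compact Hausdorff spaces. First, a cofiltered limit of nonempty compact Hausdorff spaces is nonempty, so a system of closed sets whose limit is empty must already be empty at some finite stage. Second, continuity of $\pi_0$, i.e. $\pi_0(X)=\varprojlim_\lambda\pi_0(X_\lambda)$, equivalently that every locally constant map to a finite set from $X$, or from a compact ``cylinder'' $p_\mu^{-1}(C)$ with $C\subseteq X_\mu$ closed, factors through a finite stage. Compactness also guarantees that any finite open cover of $X$ is refined by $p_\mu^{-1}$ of a finite open cover of a single $X_\mu$.

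For full faithfulness I would pass to $\uHom$-bundles: with $F=\uHom(E_\lambda,E'_\lambda)$ and $\Hom(E,E')=\Gamma(X,\uHom(E,E'))$, the condition is exactly the bijectivity of $\varinjlim_\mu\Gamma(X_\mu,p_{\mu\lambda}^*F)\to\Gamma(X,p_\lambda^*F)$ for flat $F$ on $X_\lambda$. Injectivity is clean: the vanishing locus of a section of a flat bundle is clopen (as in the proof of Proposition \ref{t21}), so if a section over $X_\mu$ pulls back to $0$ on $X$, its clopen---hence compact---non-vanishing locus has empty preimage in $X$ and therefore empties at a finite stage by the first input. For surjectivity I would trivialise $F$ over a finite cover of $X_\lambda$, restrict the given section over $X$ to a compact cylindrical shrinking of the pulled-back cover, note that on each compact piece the section is locally constant with finitely many values and hence descends by continuity of $\pi_0$, and use injectivity to force the descended pieces to agree on overlaps after passing to a common later stage; gluing produces the required section over some $X_\mu$.

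The heart of the matter, and the step I expect to be the main obstacle, is essential surjectivity: descending a flat bundle $E$ on $X$. Here I would invoke Proposition \ref{t24}, by which $E$ admits a finite atlas whose transition functions $g_{\nu\mu}$ are locally constant and take only finitely many values in $\GL_r(K)$. By the cover-refinement input the charts may be taken cylindrical, the finitely many level sets of the $g_{\nu\mu}$ are then clopen in compact cylinders, and continuity of $\pi_0$ descends each $g_{\nu\mu}$ to a locally constant $\GL_r(K)$-valued function at some finite stage. The difficulty is to perform this descent for the entire cocycle simultaneously and to verify that the descended datum is again a cocycle defining a genuine flat bundle $E_\mu$ on $X_\mu$ with $p_\mu^*E_\mu\cong E$: the cocycle identities and the inclusion relations between charts are closed conditions holding on $X$, so by the first input they can be arranged to hold at one finite stage, but since the transition maps $p_{\lambda\mu}$ need not be surjective one must be careful to work only with compact cylinders and to keep passing to later stages. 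Once this is in place, $\Phi$ is an equivalence (closure under subobjects being supplied by Corollary \ref{t23}), and the theorem follows.
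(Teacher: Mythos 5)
Your proposal is correct and follows essentially the same route as the paper: the reduction to showing $\uFl_K(X) = \varinjlim_{\lambda} \uFl_K(X_{\lambda})$ compatibly with fibre functors, the topological inputs (nonemptiness of cofiltered limits of nonempty compact sets, descent and uniqueness of locally constant functions, cover refinement --- the paper's facts {\bf (A)}--{\bf (D)}, with $H^0(X,\uK) = \varinjlim_{\lambda} H^0(X_{\lambda},\uK)$ cited from Bredon), and the descent of the \v{C}ech cocycle via a compact cylindrical shrinking $V_i \subset A_i \subset U_i$ of a finite cylindrical trivializing atlas, with the cocycle identity forced at a finite stage by compactness, are precisely the paper's argument. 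The only differences are cosmetic: you spell out the full-faithfulness step, which the paper dismisses as ``similar,'' and you invoke Proposition \ref{t24} for the finitely-many-values property, which on compact cylinders comes for free from local constancy.
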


\begin{proof}
We have to show that
\[
\uFl_K (X) = \varinjlim_{\lambda \in \Lambda} \uFl_K (X_{\lambda}) \; .
\]
This means that flat bundles on $X$ and morphisms between them are obtained via $p^*_{\lambda}$ from bundles and morphisms on the level of $X_{\lambda}$ for some $\lambda \in \Lambda$. We show this for bundles. The proof for morphisms is similar. An analogous assertion for finite coverings instead of flat bundles is given in \cite{KS} Proposition 2.11. Following \cite{KS} we call a subset of $X$ basis-open if it is of the form $p^{-1}_{\lambda} (U)$ for some $\lambda \in \Lambda$ and some open $U \subset X_{\lambda}$. Since $\Lambda$ is directed, the basis-open subsets form a basis of the topology of $X$. In the proof of descent for bundles we will use the following four facts {\bf (A)}--{\bf (D)}.

{\bf (A)} For $X = \varprojlim_{\lambda} X_{\lambda}$ as in the theorem, consider a subspace $Y_{\mu} \subset X_{\mu}$ for some $\mu \in \Lambda$. For $\lambda \ge \mu$ set $Y_{\lambda} = p^{-1}_{\lambda \mu} (Y_{\mu})$. Then the projective limit topology on $Y = \varprojlim_{\lambda} Y_{\lambda} \subset X$ equals the subspace topology of $Y$ in $X$.

This holds because a basis of the projective limit topology of $Y$ is given by the sets
\[
(p_{\lambda} |_Y)^{-1} (Y_{\lambda} \cap O_{\lambda}) = Y \cap p^{-1}_{\lambda} (Y_{\lambda}) \cap p^{-1}_{\lambda} (O_{\lambda}) = Y \cap p^{-1}_{\lambda} (O_{\lambda})
\]
where $\lambda \ge \mu$ and $O_{\lambda} \subset X_{\lambda}$ is open.

{\bf (B)} For $X = \varprojlim_{\lambda} X_{\lambda}$ as in the theorem, let $U^1_{\mu} , \ldots , U^n_{\mu}$ be open sets in $X_{\mu}$ such that $p^{-1}_{\mu} (U^1_{\mu}) , \ldots , p^{-1}_{\mu} (U^n_{\mu})$ are a cover of $X$. Then there is some $\lambda \ge \mu$ such that the open sets $U^i_{\lambda} =  p^{-1}_{\lambda \mu} (U^i_{\mu})$ form a cover of $X_{\lambda}$. Moreover this remains true for the pullbacks to $X_{\lambda'}$ for any $\lambda' \ge \lambda$.

Write $O = U^1_{\mu} \cup \ldots \cup U^n_{\mu}$. Then $O$ is open in $X_{\mu}$ and $p^{-1}_{\mu} (O) = X$. We have
\[
\emptyset = X \setminus p^{-1}_{\mu} (O) = \varprojlim_{\lambda \ge \mu} (X_{\lambda} \setminus p^{-1}_{\lambda \mu} (O)) \; .
\]
The sets $X_{\lambda} \setminus p^{-1}_{\lambda \mu} (O)$ being compact and $\Lambda$ directed, it follows that $X_{\lambda} \setminus p^{-1}_{\lambda\mu} (O) = \emptyset$ for some $\lambda \ge \mu$ and hence $X_{\lambda'} = p^{-1}_{\lambda' \mu} (O)$ for all $\lambda' \ge \lambda$.

{\bf (C)} For $X = \varprojlim_{\lambda} X_{\lambda}$ as in the theorem, let $g : X \to K$ be a locally constant function. Then there exists an index $\mu \in \Lambda$ and a locally constant function $g_{\mu} : X_{\mu} \to K$ such that $g = g_{\mu} \verk p_{\mu}$. 

{\bf (D)} In {\bf (C)} if $g = g_{\mu_1} \verk p_{\mu_1} = g_{\mu_2} \verk p_{\mu_2}$ then for some $\lambda \ge \mu_1 , \lambda \ge \mu_2$ we have $g_{\mu_1} \verk p_{\lambda \mu_1} = g_{\mu_2} \verk p_{\lambda \mu_2}$. The same is true for any $\lambda' \ge \lambda$. 

Assertions {\bf (C)} and {\bf (D)} are equivalent to the formula
\[
H^0 (X, \uK) = \varinjlim_{\lambda} H^0 (X_{\lambda} , \uK ) \; .
\]
This is a special case of \cite{B} Lemma 14.2 or Corollary 14.6. 

Let $E$ be a flat bundle on $X$ and choose a finite trivializing atlas for $E$ whose open sets $U_1 , \ldots , U_n$ are basis-open. Let $(g_{ij})$ be the corresponding \v{C}ech cocycle. Since $\Lambda$ is directed there is some $\mu \in \Lambda$ with $U_i = p^{-1}_{\mu} (U^{\mu}_i)$ for all $i$ where $U^{\mu}_i$ is open in $X_{\mu}$. By assertion {\bf (A)} we may assume that $U^{\mu}_1 , \ldots , U^{\mu}_n$ are a cover of $X_{\mu}$. Choose a cover $V^{\mu}_1 , \ldots , V^{\mu}_n$ of $X_{\mu}$ by open subsets with $V^{\mu}_i \subset A^{\mu}_i \subset U^{\mu}_i$ where $A^{\mu}_i$ is the closure of $V^{\mu}_i$ in $X_{\mu}$ and hence compact. Set $V_i = p^{-1}_{\mu} (V^{\mu}_i)$ and $A_i = p^{-1}_{\mu} (A^{\mu}_i)$. Then we have $V_i \subset A_i \subset U_i$ and $\{ V_i \}$ resp. $\{ A_i \}$ are open resp. closed covers of $X$. We have
\[
A_i \cap A_j = \varprojlim_{\lambda \ge \mu} p^{-1}_{\lambda} (A^{\lambda}_i \cap A^{\lambda}_j)
\]
where $A^{\lambda}_i = p^{-1}_{\lambda \mu} (A^{\mu}_i)$ for all $i , \lambda \ge \mu$. Applying {\bf (A)} and {\bf (C)} to this projective system (and the component functions of $g_{ij} \, |_{A_i \cap A_j}$) it follows that there are locally constant functions $g^{\nu}_{ij} : A^{\nu} _i \cap A^{\nu}_j \to \GL_r (K)$ for some index $\nu \ge \mu$ such that we have
\[
g_{ij} \, |_{A_i \cap A_j} = g^{\nu}_{ij} \verk p_{\nu} \; .
\]
The cocycle condition for the $g_{ij}$ gives the equations
\[
g_{ij} (x) \verk g_{jk} (x) = g_{ij} (x) \quad \text{for all} \; x \in A_i \cap A_j \cap A_k \; .
\]
Applying {\bf (D)} to the projective system
\[
A_i \cap A_j \cap A_k = \varprojlim_{\lambda \ge \nu} A^{\lambda}_i \cap A^{\lambda}_j \cap A^{\lambda}_k
\]
it follows that from some index $\lambda \ge \nu$ on, the locally-constant functions $g^{\lambda}_{ij} = g^{\nu}_{ij} \verk p_{\lambda \nu}$ on $A^{\lambda}_i \cap A^{\lambda}_j$ satisfy the cocycle condition. Restricting the $g^{\lambda}_{ij}$ to $V^{\lambda}_i \cap V^{\lambda}_j$ we obtain a cocycle $(g^{\lambda}_{ij})$ of locally constant $\GL_r (K)$-valued functions on the open cover $\{ V^{\lambda}_i \}$ of $X_{\lambda}$. It defines a vector bundle $E_{\lambda}$ in $\uFl_K (X_{\lambda})$ together with a canonical isomorphism $p^*_{\lambda} E_{\lambda} \cong E$ in $\uFl_K (X)$.
\end{proof}

\begin{exmp}
Fix a set $P$ of prime numbers and let $\Lambda_P$ be the set of positive integers whose prime factors belong to $\Lambda_P$. Writing $\mu \le \lambda$ if $\mu$ divides $\lambda$ the set $\Lambda_P$ becomes a directed poset. For $\lambda \in \Lambda_P$ set $X_{\lambda} = \R / \Z$ and for $\lambda \ge \mu$ let $p_{\lambda \mu} : X_{\lambda} \to X_{\mu}$ be the multiplication by $\lambda / \mu$. The projective limit is a solenoid
\[
\Sa_P = \varprojlim_{\lambda \in \Lambda_P} X_{\lambda} = \R \times^{\Z} \hZ_P \; ,
\]
where $\hZ_P = \prod_{p \in P} \Z_p$. Let $0$ be the zero element of the compact connected topological group $\Sa_P$. By Theorems \ref{t41} and \ref{t43} we have
\[
\pi_K (\Sa_P , 0) = \varprojlim_{\lambda \in \Lambda_P} \pi_K (\R / \Z, 0) = \varprojlim_{\lambda \in \Lambda_P} \Z^{\alg}_K \; .
\]
Here the transition map from the $\lambda$-th copy of $\Z^{\alg}_K$ to the $\mu$-th copy for $\lambda \ge \mu$ is given by multiplication with $\lambda / \mu$. This holds because $N$-multiplication on $\R / \Z$ induces $N$-multiplication on $\pi_1 (\R / \Z , 0) = \Z$ and hence also on $\pi_K (\R / \Z , 0) = \pi_1 (\R / \Z , 0)^{\alg}_K$. For simplicity we now assume that $K$ is algebraically closed of characterstic zero. Recall the decomposition $\Z^{\alg}_K = \Ge_a \times D$ of \eqref{eq:35}. For any positive integer $N$, the $N$-multiplication on $\Ge_a$ is an isomorphism and hence
\[
\varprojlim_{\lambda \in \Lambda_P} \Ge_a = \Ge_a \; .
\]
\end{exmp}

On $D = \spec K [K^{\times}]$ the $N$-multiplication map comes from the $N$-th power map on $K^{\times}$. Hence we have
\[
\D_P := \varprojlim_{\lambda \in \Lambda_P} D = \spec K \big[ \varinjlim_{\lambda \in \Lambda_P} K^{\times} \big] = \spec K [K^{\times} / \mu_{P^{\infty}}] \; ,
\]
where
\[
\mu_{P^{\infty}} = \{ \zeta \in K^{\times} \mid \zeta^{\lambda} = 1 \quad \text{for some} \; \lambda \in \Lambda_P \} \; .
\]
Thus we find:
\[
\pi_K (\Sa_P , 0) = \Ge_a \times \D_P \; .
\]
The connected component $\D^0_P$ of $\D_P$ is independent of $P$. It is the pro-torus
\[
\D^0 = \spec K [K^{\times} / \mu_K ] 
\]
with character group $K^{\times} / \mu_K$. The maximal pro-\'etale quotient of $\D$ is the pro-\'etale group scheme with character group $\mu_K / \mu_{P^{\infty}}$. It is isomorphic to $\prod_{p \notin P} \Z_p$ viewed as a pro-finite-constant group scheme over $K$. Hence we have
\[
\pi_K (\Sa_P , 0)^0 = \Ge_a \times \D^0 \quad \text{and} \quad \pi_K (\Sa_P , 0)^{\et} = \prod_{p \notin P} \Z_p /_K \; .
\]
Via the isomorphism \eqref{eq:27}
\[
\pi^{\et}_1 (\Sa_P , 0) /_K \silo \pi_K (\Sa_P , 0)^{\et} = \prod_{p \notin P} \Z_p /_K \; ,
\]
we see that the more multiplications by primes are inverted on $S^1$ by passing to the solenoid $\Sa_P$, the fewer finite coverings remain. For the full solenoid $\Sa$ where $P$ consists of all prime numbers we have
\[
\pi^{\et}_1 (\Sa , 0)_K = \pi_K (\Sa , 0)^{\et} = 0 \; .
\]
Hence
\[
\pi_K (\Sa , 0) = \Ge_a \times \D^0 
\]
is connected, where $\D^0$ is the pro-torus with character group $K^{\times} / \mu_K$.

\begin{rem}
The \v{C}ech fundamental group $\check{\pi}_1$ is continuous and hence we have
\[
\check{\pi}_1 (\Sa_P , 0) = \varprojlim_{\lambda} \check{\pi}_1 (\R / \Z , 0) = \varprojlim_{\lambda} \Z \; .
\]
Here the transition maps are multiplication by $\lambda / \mu$ as before. It follows that $\check{\pi}_1 (\Sa_P , 0)$ is trivial. 
\end{rem}

We now relate the groups $\pi_K (X, x)$ to cohomology. For a topological space $X$ let $H^i (X , \Fh)$ denote the derived functor cohomology of a sheaf of abelian groups $\Fh$. For $i = 0 , 1$ it is isomorphic to \v{C}ech cohomology. For a sheaf of possibly non-abelian groups $\Gh$ we consider the \v{C}ech cohomology set $\check{H}^1 (X , \Gh)$.

\begin{prop}
\label{t44}
Let $X$ be a connected topological space, $x \in X$ and $K$ a field, $r \ge 1$. \\
a) There is a canonical isomorphism
\[
\Hom (\pi_K (X,x) , \GL_{r / K}) / \GL_r (K) \silo H^1 (X , \uGL_r (K)) \; .
\]
Here $\GL_r (K)$ acts by conjugation on the group scheme $\GL_{r / K}$. In particular
\[
\Hom (\pi_K (X,x) , \Ge_m) \silo H^1 (X , \uK^{\times}) \; .
\]
b) There is a canonical isomorphism
\[
\Hom (\pi_K (X,x) , \Ge_a) \silo H^1 (X, \uK) \; .
\]
c) The above maps are functorial with respect to base point preserving continuous maps of connected topological spaces.
\end{prop}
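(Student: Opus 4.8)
The plan is to deduce all three parts from two ingredients already available in the paper: the Tannakian equivalence $\uFl_K(X) \simeq \uRep_K(\pi_K(X,x))$, and the equivalence $\uFl_K(X) \simeq \uLoc_K(X)$ together with the description of flat bundles and their extensions by locally constant \v Cech cocycles (legitimate because $K$ carries the discrete topology). The unifying principle is that each $\Hom$-set on the left classifies representations of $\pi_K(X,x)$ carrying a prescribed shape, while each cohomology group on the right classifies flat bundles (resp.\ extensions) of the same shape; the two meet inside the category $\uFl_K(X)$.

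For part a) I would first note that a morphism of affine group schemes $\rho\colon \pi_K(X,x) \to \GL_{r/K}$ is exactly a linear representation of $\pi_K(X,x)$ on $K^r$, hence by Tannakian duality exactly a flat bundle $E$ in $\uFl_K(X)$ equipped with a framing $\omega_x(E) = E_x \xrightarrow{\sim} K^r$. Two morphisms are $\GL_r(K)$-conjugate precisely when the underlying bundles are isomorphic with the framing changed accordingly, so passing to the quotient by conjugation forgets the framing and produces the set of isomorphism classes of rank-$r$ flat bundles. On the right, a rank-$r$ flat bundle is given locally by locally constant transition functions $g_{ij}\colon U_i \cap U_j \to \GL_r(K)$, i.e.\ by a $\uGL_r(K)$-valued \v Cech $1$-cocycle, and isomorphic bundles yield cohomologous cocycles; thus isomorphism classes of rank-$r$ flat bundles are classified by $\check H^1(X, \uGL_r(K))$. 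Composing the two identifications gives the canonical isomorphism. The $\Ge_m$-case is $r=1$, where the conjugation action is trivial since $\GL_1$ is abelian, so no quotient is needed.

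For part b) the same philosophy applies with unipotent data. A homomorphism $\phi\colon \pi_K(X,x) \to \Ge_a$ corresponds to the upper-triangular representation $\sigma \mapsto \bigl(\begin{smallmatrix} 1 & \phi(\sigma) \\ 0 & 1 \end{smallmatrix}\bigr)$ on $K^2$, that is, to an extension $0 \to \uK \to E \to \uK \to 0$ in $\uRep_K(\pi_K(X,x)) \simeq \uFl_K(X)$; addition of homomorphisms matches Baer sum, so $\Hom(\pi_K(X,x),\Ge_a) \cong \Ext^1_{\uFl_K(X)}(\uK,\uK)$ as abelian groups. Transporting to $\uLoc_K(X)$ and using that such an extension of local systems is locally split (over a connected open set a sub-line of the constant sheaf $\uK^2$ admits a constant complement, since a locally constant function on a connected space is constant), the gluing of local splittings yields an additive cocycle valued in $\uHom(\uK,\uK) = \uK$, giving the standard identification $\Ext^1_{\uLoc_K(X)}(\uK,\uK) \cong H^1(X,\uK)$. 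This is the asserted isomorphism.

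Finally, part c) is formal: a base-point preserving continuous map $f\colon (X,x) \to (Y,y)$ induces $f_*\colon \pi_K(X,x) \to \pi_K(Y,y)$, and precomposition with $f_*$ corresponds to the pullback $f^*$ in cohomology because both are realized at the level of flat bundles by the tensor functor $f^*\colon \uFl_K(Y) \to \uFl_K(X)$, which on transition cocycles is mere composition with $f$. I expect the only genuine care to be in the bookkeeping: checking that $\GL_r(K)$-conjugation is precisely change of framing, so that the quotient in a) is clean, and verifying in b) that the local splitting is available over an arbitrary, possibly pathological, connected space, so that the cocycle description of $\Ext^1$ really reproduces $H^1(X,\uK)$. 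Both points reduce to the constancy of locally constant functions on connected spaces, used throughout the paper.
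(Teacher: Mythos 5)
Your proposal is correct and follows essentially the paper's own route. Part a) is exactly the paper's argument (the paper compresses it to one sentence: both sides classify isomorphism classes of rank-$r$ flat bundles), and part c) is the formal functoriality the paper leaves implicit. For part b), your first step --- identifying $\Hom(\pi_K(X,x),\Ge_a)$ with extensions of $\uK$ by itself via the unipotent $2\times 2$ representation --- coincides with the paper's. For the second step the paper's primary argument differs slightly: it shows that $\uLoc_K(X)$ is closed under extensions inside the category of \emph{all} sheaves of $K$-vector spaces (any sheaf extension of $\uK$ by $\uK$ splits locally, by Grothendieck's lemma that $\varinjlim_{U \ni y} H^1(U,\uK)=0$), whence $\Ext^1_{\uLoc_K(X)}(\uK,\uK) = \Ext^1_X(\uK,\uK) = H^1(X,\uK)$ as derived-functor groups. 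Your \v{C}ech route --- glue local splittings into a $\uK$-valued cocycle --- is precisely the alternative proof the paper sketches right after, and it is complete once you also invoke the standard identification $\check{H}^1 = H^1$ in degree one, which the paper records at the start of its cohomology discussion.

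One point in your write-up would fail as literally stated and needs repair. You justify local splitting ``over a connected open set,'' and you claim that all remaining care ``reduces to the constancy of locally constant functions on connected spaces.'' But the spaces this paper is designed to handle (e.g.\ the solenoids of Section 4) are connected without being locally connected: they need not contain \emph{any} nonempty connected open subset, so there is no basis of connected opens and that argument is vacuous for them. The repair is short and does not use connectedness: on an open $U$ trivializing the middle term, the inclusion $\uK \to E\,|_U \cong \uK^2$ is given by a locally constant map $f : U \to \Hom_K(K,K^2)$; the level sets of $f$ are open, so after shrinking $U$ to the level set through a given point, $f$ is constant, and any linear retraction of the injection $f$ then splits the extension over that neighborhood. (Alternatively, follow the paper's primary proof, where local splittings of arbitrary sheaf extensions are supplied exactly by Grothendieck's lemma.) With that substitution your proof is sound.
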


\begin{proof}
a) Both sides describe isomorphism classes of flat vector bundles of rank $r$ on $X$.\\
b) We have an isomorphism where $\pi_K (X,x)$ acts trivially on $\A_K = \A^1_K$
\begin{equation}
\label{eq:41}
\Hom (\pi_K (X,x) , \Ge_a) = \Ext^1_{\pi_K (X,x)} (\A_K , \A_K) \; .
\end{equation}
Namely, we view $\Ge_a$ as the algebraic group $U_2$ of unipotent matrices $\left( \begin{smallmatrix} 1 & * \\ 0 & 1 \end{smallmatrix} \right)$. Given a homomorphism $\lambda : \pi_K (X,x) \to \Ge_a$ we let $\pi_K (X,x)$ act on $\A^2_K$ via $U_2$. This defines an extension of $\A_K$ by itself in $\uRep_K (\pi_K (X,x))$. The resulting map is the isomorphism \eqref{eq:41}. By the equivalence of categories
\[
\uRep_K (\pi_K (X,x)) \silo \uLoc_K (X)
\]
we obtain an isomorphism:
\[
\Hom (\pi_K (X,x) , \Ge_a) \silo \Ext^1_{\uLoc_K (X)} (\uK , \uK) \overset{(!)}{=} \Ext^1_X (\uK , \uK) = H^1 ( X , \uK) \; .
\]
Note here that any extension of sheaves of $K$-vector spaces
\[
0 \longrightarrow \uK \longrightarrow \Fh \longrightarrow \uK \longrightarrow 0 
\]
splits locally, so that $\Fh$ is in $\uLoc_K (X)$. This holds because for all $y \in X$ we have $\varinjlim_{U \ni y} H^1 (U , \uK) = 0$, a special case of \cite{G} 3.8.2 Lemma. For another proof of b) we could argue that $H^1 (X , \uK) = \check{H}^1 (X , U_2 (K))$ classifies the isomorphism classes of rank $2$ unipotent flat bundles, i.e. extensions in $\uFl_K (X)$ of $\uK$ by itself, and argue as above. Here the lemma from \cite{G} is implicitely used in the identification of $H^1$ with \v{C}ech $\check{H}^1$. 
\end{proof}

\begin{rems}
a) It is instructive to apply the proposition to the solenoids $\Sa_P$.\\
b) Cohomology is a functor under {\it arbitrary} continuous maps. This is not clear to me for the left hand sides of the isomorphisms in the proposition for general $K$. Namely, I do not know if the conjugacies coming from the isomorphisms of the fibre functors in different points are in general already defined over $K$. This is the case if $K$ is algebraically closed, by Deligne's general result on fibre functors \cite{D}, \cite{Cou}, \cite{Wib}.
\end{rems}

There is a canonical exact sequence
\[
1 \longrightarrow \pi^u_K (X,x) \longrightarrow \pi_K (X,x) \longrightarrow \pi^{\red}_K (X,x) \longrightarrow 1 \; .
\]
Here $\pi^{\red}_K (X,x)$ is the maximal pro-reductive quotient of $\pi_K (X,x)$ and $\pi^u_K (X,x)$ the pro-unipotent radical, a connected group scheme. For a field extension $L / K$ we have a natural morphism
\begin{equation}
\label{eq:42}
\pi_L (X,x) \longrightarrow \pi_K (X,x) \otimes_K L 
\end{equation}
of group schemes over $L$ and similarly for $\pi^u_K$ and $\pi^{\red}_K$. The example of $X = S^1$ already shows that $\pi^{\red}_K$ and hence $\pi_K$ itself do not commute with the base change $L / K$, i.e. \eqref{eq:42} is not an isomorphism. However, in the example $X = S^1$, the unipotent part $\pi^u_K (S^1 , x) = \Ge_{a,K}$ does commute with base change and this may be true in general. 

The $\Ext$-groups in $\uFl_K (X)$ for $\car K = 0$ may be expressed in terms of $\pi_K = \pi_K (X,x)$-cohomology by the standard Tannakian formalism, c.f. \cite{Ja} Appendix C4. Namely, for flat vector bundles $F$ and $E$ corresponding to $\pi_K (X,x)$-representations on $F_x$ and $E_x$ we have
\[
\Ext^i_{\uFl_K (X)} (F,E) = H^i (\pi_K , \uHom (F_x , E_x)) \; .
\]
Here $H^i$ is the (colimit of the) cohomology theory introduced in \cite{H}. Moreover, for a $\pi_K (X,x)$-representation on a finite dimensional $K$-vector space $V_x$, we have
\[
H^i (\pi_K , V_x) = H^i (\pi^u_K , V_x)^{\pi^{\red}_K}
\]
and
\[
H^i (\pi^u_K , V_x) = H^i (\Lie \pi^u_K , V_x)^{\pi^{\red}_K} \; .
\]
Setting $V = \uHom (F,E)$ in $\uFl_K (X)$, we therefore obtain
\[
\Ext^i_{\uFl_K (X)} (F, E) = H^i (\Lie \pi^u_K , V_x)^{\pi^{\red}_K} \; .
\]
The full subcategory $\uLoc_K (X)$ of the category of all sheaves of $K$-vector spaces on $X$ is stable under extensions. Hence for $F = \uK$ and $i = 0,1$ we get an isomorphism
\[
H^i (X,E) = H^i (\Lie \pi^u_K , E_x)^{\pi^{\red}_K} \; .
\]
For $i = 2$ the group on the left contains the one on the right.

We end with a couple of open issues: 

Does $\pi^u_K (X,x)$ commute with base change to fields $L / K$? 

For pointed connected spaces $(X,x) , (X' , x')$ consider the natural faithfully flat map
\begin{equation}
\label{eq:43}
\pi_K (X \times X' , (x,x')) \longrightarrow \pi_K (X,x) \times_K \pi_K (X',x')
\end{equation}
induced by the projections. Note that \eqref{eq:43} is split by the product of the morphisms induced by the maps $X \to X \times X' , z \mapsto (z, x')$ and $X' \to X \times X' , z' \mapsto (x,z')$. In what generality are \eqref{eq:43} and its variants for $\pi^u_K$ and $\pi^{\red}_K$ isomorphisms? 

We introduced fundamental groupoids in our setting with a view towards proving Seifert van Kampen results, at least for $\pi^u_K$. According to \cite{L} there exist free products for pro-unipotent algebraic group schemes. One would also need amalgamated products.

\end{document}